\title{Length categories of infinite height}
\author{Henning Krause}
\address{Henning Krause\\ Fakult\"at f\"ur Mathematik\\
  Universit\"at Bielefeld\\ D-33501 Bielefeld\\ Germany.}
\email{hkrause@math.uni-bielefeld.de}
\author{Dieter Vossieck}
\address{Dieter Vossieck\\ Fakult\"at f\"ur Mathematik\\
  Universit\"at Bielefeld\\ D-33501 Bielefeld\\ Germany.}
\email{vossieck@math.uni-bielefeld.de}
\theoremstyle{plain}
\newtheorem{thm}{Theorem}[section]
\newtheorem{prop}[thm]{Proposition}
\newtheorem{lem}[thm]{Lemma} 
\newtheorem{cor}[thm]{Corollary}
\theoremstyle{definition}
\newtheorem{exm}[thm]{Example}
\theoremstyle{remark}
\newtheorem{rem}[thm]{Remark}
\numberwithin{equation}{section}
\newcommand{\Coker}{\operatorname{Coker}}
\newcommand{\colim}{\operatorname*{colim}}
\renewcommand{\dim}{\operatorname{dim}}
\newcommand{\Eff}{\operatorname{Eff}}
\newcommand{\End}{\operatorname{End}}
\newcommand{\Ext}{\operatorname{Ext}}
\newcommand{\fp}{\operatorname{fp}}
\newcommand{\Fp}{\operatorname{Fp}}
\newcommand{\height}{\operatorname{ht}}
\newcommand{\Hom}{\operatorname{Hom}}
\newcommand{\ind}{\operatorname{ind}}
\newcommand{\Ind}{\operatorname{Ind}}
\newcommand{\Ker}{\operatorname{Ker}}
\newcommand{\Lex}{\operatorname{Lex}}
\renewcommand{\mod}{\operatorname{mod}}
\newcommand{\oHom}{\operatorname{\overline{Hom}}}
\newcommand{\rad}{\operatorname{rad}}
\newcommand{\soc}{\operatorname{soc}}
\newcommand{\St}{\operatorname{St}}
\newcommand{\supp}{\operatorname{supp}}
\newcommand{\uHom}{\operatorname{\underline{Hom}}}
\newcommand{\Ab}{\mathrm{Ab}}
\newcommand{\op}{\mathrm{op}}
\newcommand{\comp}{\mathop{\circ}}
\newcommand{\lto}{\longrightarrow}
\newcommand{\xto}{\xrightarrow}
\def\A{\mathcal A} 
\def\C{\mathcal C}
\def\X{\mathcal X}
\def\Y{\mathcal Y}
\def\bbZ{\mathbb Z}
\newcommand{\frakm}{\mathfrak{m}} 
\newcommand{\frako}{\mathfrak{0}}
\def\a{\alpha}
\def\b{\beta}
\def\p{\phi}
\def\t{\tau}
\def\La{\Lambda}
\begin{document}

\dedicatory{Dedicated to Dave Benson on the occasion of his 60th birthday.}
\thanks{February 17, 2017.}
\begin{abstract}
  For abelian length categories the borderline between finite and
  infinite representation type is discussed. Characterisations of
  finite representation type are extended to length categories of
  infinite height, and the minimal length categories of infinite
  height are described.
\end{abstract}

%\subjclass[2010]{18E15 (primary), 14F05, 16E30, 18G15}

\maketitle

\setcounter{tocdepth}{1}
\tableofcontents

\section{Introduction}

An abelian category is a \emph{length category} if it is essentially
small and every object has a finite composition series \cite{Ga1973}.
The \emph{height} of a length category is the supremum of the Loewy
lengths of all objects.

The aim of this note is to explore the structure of length categories
of infinite height. Length categories of finite height arise from
artinian rings by taking the category of finite length modules. Also,
length categories of infinite height are ubiquitous, and typical
examples are the uniserial categories which are not of finite
height. Recall that a length category is \emph{uniserial} if every
indecomposable object has a unique composition series
\cite{AR1968}. For instance, the category of nilpotent finite
dimensional representations of a cyclic quiver over any field is
uniserial and of infinite height.

The paper is divided into three parts. First we extend known
characterisations of finite representation type for module categories
to more general length categories, including those of infinite height
(Theorem~\ref{th:indec}). Then we show that uniserial categories
satisfy these finiteness conditions (Corollary~\ref{co:uniserial} and
Theorem~\ref{th:uniserial}). In the final part, we describe the
minimal length categories of infinite height, and it turns out that
only uniserial categories occur (Theorem~\ref{th:minimal}).

\section{Length categories}

In this section we collect some basic concepts that are relevant for
the study of  abelian length categories.

For a module $M$ over a ring $\La$ let $\ell_\La(M)$ denote its
composition length.

\subsection*{Ext-finite categories}

A length category $\C$ is
\emph{(left) Ext-finite} if for every pair of simple objects $S$ and $T$
\[\ell_{\End_\C(T)}(\Ext_\C^1(S,T))<\infty.\] 
A length category $\C$ is equivalent to a module category (consisting
of the finitely generated modules over a right artinian ring) if and
only if the following holds \cite{Ga1973}:
\begin{enumerate}
\item The category $\C$ has only finitely many simple objects.
\item The category $\C$ is Ext-finite.
\item The supremum of the Loewy lengths of the objects in $\C$ is
  finite.
\end{enumerate}

\subsection*{Hom-finite categories}
Let $\C$ be an essentially small additive category. Let us call $\C$
\emph{(left) Hom-finite} if for all objects $X,Y$ in $\C$ the
$\End_\C(Y)$-module $\Hom_\C(X,Y)$ has finite length. Clearly, this
property implies that $\C$ is a Krull-Schmidt category, assuming that
$\C$ is idempotent complete.

\begin{lem}\label{le:Hom-finite}
Let $\C$ be a Krull-Schmidt category. Then $\C$ is Hom-finite provided
that for all pairs of indecomposable objects $X,Y$ the
$\End_\C(Y)$-module  $\Hom_\C(X,Y)$ has finite length. 
\end{lem}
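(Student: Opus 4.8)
The plan is to reduce the statement to its hypothesis by combining the Krull--Schmidt decompositions of $X$ and $Y$ with one elementary observation: restricting scalars along a ring inclusion $R_0\hookrightarrow R$ can only shrink the lattice of submodules, so it can only decrease length. The point where one must be careful — and the reason a naive ``summand by summand in $Y$'' argument fails — is that the idempotents of $\End_\C(Y)$ cutting out the summands of $Y$ are not central, so $\Hom_\C(X,Y)$ does \emph{not} decompose as an $\End_\C(Y)$-module along the summands of $Y$. The device is to pass to a smaller, genuinely block-diagonal ring over which it does decompose.

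Concretely, I would fix $X,Y$ in $\C$ and, using that $\C$ is Krull--Schmidt, write $X=\bigoplus_{i=1}^{m}X_i$ and $Y=\bigoplus_{j=1}^{n}Y_j$ with all $X_i,Y_j$ indecomposable. Set $\Ga=\End_\C(Y)$ and $\Ga_0=\prod_{j=1}^{n}\End_\C(Y_j)$. The structural morphisms $\iota_j\colon Y_j\to Y$ and $p_j\colon Y\to Y_j$ give a unital ring embedding $\Ga_0\hookrightarrow\Ga$, $(h_j)_j\mapsto\sum_j\iota_jh_jp_j$, so $\Hom_\C(X,Y)$ is a $\Ga_0$-module by restriction. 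Writing each $f\colon X\to Y$ as $(p_jf)_j$ identifies $\Hom_\C(X,Y)$ with $\bigoplus_{j=1}^n\Hom_\C(X,Y_j)$, and a direct check shows this is a decomposition of $\Ga_0$-modules on which $(h_j)_j$ acts componentwise by post-composition; in particular the $j$-th summand carries its usual structure over the $j$-th factor $\End_\C(Y_j)$. Hence $\ell_{\Ga_0}\bigl(\Hom_\C(X,Y)\bigr)=\sum_{j=1}^{n}\ell_{\End_\C(Y_j)}\bigl(\Hom_\C(X,Y_j)\bigr)$.

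Each term on the right is finite: since post-composition with endomorphisms of $Y_j$ respects the decomposition of $X$, one has $\Hom_\C(X,Y_j)\cong\bigoplus_{i=1}^{m}\Hom_\C(X_i,Y_j)$ as $\End_\C(Y_j)$-modules, and each $\Hom_\C(X_i,Y_j)$ has finite length over $\End_\C(Y_j)$ by hypothesis, as $X_i$ and $Y_j$ are indecomposable. So $\ell_{\Ga_0}(\Hom_\C(X,Y))<\infty$. Finally I would transfer this to $\Ga$: because $\Ga_0\subseteq\Ga$, every chain of $\Ga$-submodules of $\Hom_\C(X,Y)$ is in particular a chain of $\Ga_0$-submodules, so $\ell_\Ga(\Hom_\C(X,Y))\le\ell_{\Ga_0}(\Hom_\C(X,Y))<\infty$. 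This shows $\C$ is Hom-finite. I expect the only genuine obstacle to be seeing the right reduction in the middle step — recognising that, although the summand idempotents of $Y$ are not central in $\Ga$, they become central in the subring $\Ga_0$, over which the Hom-module therefore splits — after which everything is additivity of length over finite direct sums together with the monotonicity of length under restriction of scalars.
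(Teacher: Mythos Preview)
Your proof is correct, but it follows a genuinely different route from the paper's. The paper groups the indecomposable summands of $Y$ by isomorphism type, writing $Y=\bigoplus_j Y_j^{n_j}$ with the $Y_j$ pairwise non-isomorphic, and sets $Y'=\bigoplus_j Y_j$. It then proves the \emph{equalities}
\[
\ell_{\End_\C(Y)}(\Hom_\C(X,Y))=\ell_{\End_\C(Y')}(\Hom_\C(X,Y'))=\sum_j \ell_{\End_\C(Y_j)}(\Hom_\C(X,Y_j)),
\]
the first via the Morita equivalence between $\End_\C(Y)$ and $\End_\C(Y')$, the second via the identification $\End_\C(Y')/\rad\End_\C(Y')\cong\prod_j\End_\C(Y_j)/\rad\End_\C(Y_j)$, which pins down the simple $\End_\C(Y')$-modules and their multiplicities. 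Your argument instead keeps all summands of $Y$ separate, passes to the diagonal subring $\Gamma_0=\prod_j\End_\C(Y_j)\subseteq\End_\C(Y)$, and uses only additivity of length over a product ring together with monotonicity of length under restriction of scalars. This is more elementary---no Morita theory, no radical computation---and yields the inequality $\ell_{\End_\C(Y)}(\Hom_\C(X,Y))\le\sum_{i,j}\ell_{\End_\C(Y_j)}(\Hom_\C(X_i,Y_j))$, which is all the lemma requires. The paper's approach buys a sharper formula (an exact value rather than a bound); yours buys simplicity and avoids any structural input beyond the Krull--Schmidt hypothesis.
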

\begin{proof}
  Choose decompositions $X=\bigoplus_i X_i$ and
  $Y=\bigoplus_j Y_j^{n_j}$, $n_j>0$, such that the $Y_j$ are
  indecomposable and pairwise non-isomorphic. Set
  $Y'=\bigoplus_j Y_j$. Then
\[\ell_{\End_\C(Y)}(\Hom_\C(X,Y))=\sum_i\ell_{\End_\C(Y)}(\Hom_\C(X_i,Y))\]
and 
\begin{align*}\ell_{\End_\C(Y)}(\Hom_\C(X,Y))&=\ell_{\End_\C(Y')}(\Hom_\C(X,Y'))\\
&=\sum_j \ell_{\End_\C(Y_j)}(\Hom_\C(X,Y_j))
\end{align*}
since
\[\End_\C(Y')/\rad \End_\C(Y')\cong\prod_j \End_\C(Y_j)/\rad \End_\C(Y_j).\]
Now the assertion follows.
\end{proof}

\begin{exm}
  Let $k$ be a commutative ring and $\C$ a $k$-linear category.  If
  the $k$-module $\Hom_\C(X,Y)$ has finite length for all $X,Y$ in
  $\C$, then $\C$ and $\C^\op$ are Hom-finite.
\end{exm}

\subsection*{Finitely presented and effaceable functors}

Let $\C$ be an abelian category. 
An additive functor $F\colon\C\to\Ab$ is \emph{finitely presented} if there
is a presentation
\begin{equation}\label{eq:fp}
\Hom_\C(Y,-)\lto \Hom_\C(X,-)\lto F\lto 0,
\end{equation}
and we call $F$ \emph{effaceable} if there is such a presentation such
that the morphism $X\to Y$ in $\C$ is a monomorphism.  We denote by
$\Fp(\C,\Ab)$ the category of finitely presented functors
$F\colon\C\to\Ab$ and by $\Eff(\C,\Ab)$ the full subcategory of
effaceable functors. Note that $\Fp(\C,\Ab)$ is an abelian  category,
and  $\Eff(\C,\Ab)$ is a Serre subcategory.

We recall the following duality.  The assignment
$F\mapsto F^{\scriptscriptstyle\vee}$ given by
\[ F^{\scriptscriptstyle\vee}(X)=\Ext^2(F,\Hom_\C(X,-))\] yields an equivalence
\begin{equation}\label{eq:eff}
\Eff(\C,\Ab)^\op\xto{\ \sim\ }\Eff(\C^\op,\Ab),
\end{equation}
where $\Ext^2(-,-)$ is computed in the abelian category $\Fp(\C,\Ab)$;
see Theorem~3.4 in Chap.~II of \cite{Au1978}.  If
$0\to X\xto{\a} Y\xto{\b} Z\to 0$ is an exact sequence in $\C$ and
$F=\Coker\Hom_\C(\a,-)$, then
$F^{\scriptscriptstyle\vee}\cong \Coker\Hom_\C(-,\b)$ and
$F^{\scriptscriptstyle\vee\scriptscriptstyle\vee}\cong F$.

The \emph{Yoneda functor}
\[\C\lto\Fp(\C^\op,\Ab),\quad X\mapsto \Hom_\C(-,X)\]
admits an exact left adjoint that sends $\Hom_\C(-,X)$ to $X$; it
annihilates the effaceable functors and induces an exact functor
\begin{equation}\label{eq:eff-formula}
\frac{\Fp(\C^\op,\Ab)}{\Eff(\C^\op,\Ab)}\lto \C
\end{equation}
which is an equivalence; see \cite[p.~205]{Au1966} and \cite[III, Prop.~5]{Ga1962}.

\section{Grothendieck groups and almost split sequences}

Let $\C$ be an essentially small abelian category. The
\emph{Grothendieck group} $K_0(\C)$ is the abelian group generated by
the isomorphism classes $[C]$ of objects $C\in\C$ subject to the
relations $[C']-[C]+[C'']$, one for each exact sequence
$0\to C'\to C\to C''\to 0$ in $\C$. Analogously, we write $K_0(\C,0)$
for the abelian group generated by the isomorphism classes $[C]$ of
objects $C\in\C$ subject to the relations $[C']-[C]+[C'']$, one for
each split exact sequence $0\to C'\to C\to C''\to 0$ in $\C$. Thus
there is a canonical epimorphism
\[\pi\colon K_0(\C,0)\lto K_0(\C).\]

Our aim is to find out when the kernel of $\pi$ is generated by
elements $[X]-[Y]+[Z]$ that are given by almost split sequences
$0\to X\to Y\to Z\to 0$ in $\C$.

\subsection*{Almost split sequences}

Let $\C$ be a Krull-Schmidt category. Recall from \cite{AR1975} that a
morphism $\a\colon X\to Y$ in $\C$ is \emph{left almost split} if it is
not a split mono and every morphism $X\to Y'$ that is not a split mono
factors through $\a$.  Dually, a morphism $\b\colon Y\to Z$ in $\C$ is
\emph{right almost split} if it is not a split epi and every morphism
$Y'\to Z$ that is not a split epi factors through $\b$. An exact
sequence $0\to X\xto{\a} Y\xto{\b} Z\to 0$ is \emph{almost split} if
$\a$ is left almost split and $\b$ is right almost split.

We say that $\C$ \emph{has almost split sequences} if for every
indecomposable object $X\in\C$, there is an almost split sequence
starting at $X$ when $X$ is non-injective, and there is an almost
split sequence ending at $X$ when $X$ is non-projective.

\begin{lem}\label{le:almost-split}
A morphism $\a\colon X\to Y$ in $\C$ is left almost split if and
  only $X$ is indecomposable and $\a$ induces an exact sequence
\[\Hom_\C(Y,-)\lto \Hom_\C(X,-)\lto F\lto 0\] in $\Fp(\C,\Ab)$ such
that $F$ is a simple object.
\end{lem}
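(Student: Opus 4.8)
The plan is to relate the categorical condition ``$\a$ is left almost split'' to the functor-theoretic condition ``$F = \Coker\Hom_\C(\a,-)$ is simple'' by unwinding the Yoneda embedding $\C \to \Fp(\C^\op,\Ab)$ and its properties, together with the basic structure of finitely presented functors. First I would record the elementary observation that for a morphism $\a\colon X\to Y$, the induced sequence $\Hom_\C(Y,-)\to\Hom_\C(X,-)\to F\to 0$ is by definition a presentation of $F$, so $F\in\Fp(\C,\Ab)$ always; the content is in identifying when $F$ is simple. The value $F(Z)$ is the cokernel of $\Hom_\C(Y,Z)\to\Hom_\C(X,Z)$, i.e.\ the group of morphisms $X\to Z$ modulo those factoring through $\a$; and a morphism of functors $F\to G$ out of $F$ is determined, via the presentation, by where the image of $\id_X$ goes.

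For the forward direction, suppose $\a$ is left almost split. Then $X$ is indecomposable: if $X = X_1\oplus X_2$ nontrivially, the inclusion $X_1\to X$ is not a split mono but does not factor through $\a$ unless $\a$ itself is a split mono (one checks this using the projection $X\to X_1$), contradiction; so $\End_\C(X)$ is local. To see $F$ is simple, note first $F\neq 0$ since $\id_X\colon X\to X$ does not factor through $\a$ (as $\a$ is not a split mono). Now let $G\subseteq F$ be a nonzero subfunctor; I would show $G = F$. The class $\bar\g$ of some $\g\colon X\to Z$ lies in $G(Z)$ for some $Z$; the key point is that, because $\a$ is left almost split, \emph{every} element of $F(Z')$ for every $Z'$ is obtained from $\bar\g$ by the functoriality of $F$ — more precisely, one shows that the subfunctor of $F$ generated by $\bar\g$ is all of $F$, using that any $\delta\colon X\to Z'$ not factoring through $\a$ together with $\g$ not factoring through $\a$ can be compared via the local ring $\End_\C(X)$ and the defining factorisation property. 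The cleanest route is: the subfunctor generated by $\bar\g$ corresponds to a factorisation $\bar\g = $ (image of) a map, and minimality of the presentation of $F$ (after deleting superfluous summands of $Y$) forces it to be everything. I expect this to be the main obstacle: making precise that a single nonzero element generates $F$ requires carefully using the almost split property against the local endomorphism ring of $X$, and dealing with the fact that the presenting object $Y$ need not be minimal.

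For the converse, suppose $X$ is indecomposable and $F = \Coker\Hom_\C(\a,-)$ is simple. Then $\a$ is not a split mono, since otherwise $\Hom_\C(\a,-)$ is a split epi and $F = 0$, not simple. Given $\mu\colon X\to Y'$ not a split mono, I must factor $\mu$ through $\a$, i.e.\ show $\bar\mu = 0$ in $F(Y')$. Consider the map $\mu$: it induces a morphism of functors $\Hom_\C(Y',-)\to F$ (sending $\id_{Y'}$ to $\bar\mu$), hence a morphism $\Hom_\C(Y',-)\to F$ whose image is a subfunctor of $F$; by simplicity this image is $0$ or $F$. If it is $0$ then $\bar\mu = 0$ and we are done. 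If it is $F$, then $\bar\mu$ generates $F$, and I would derive a contradiction with $\mu$ not being split mono: one shows that a generator of $F$ coming from a map $X\to Y'$ forces, via the Yoneda left adjoint applied to $\Hom_\C(Y',-)\twoheadrightarrow F$ and the identification \eqref{eq:eff-formula}, a splitting of $\a$ locally, using indecomposability of $X$ (so $\End_\C(X)$ local) to upgrade ``split after composing with something'' to ``$\mu$ a split mono''. Assembling the two directions gives the lemma.
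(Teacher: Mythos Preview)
The paper does not prove this lemma; it simply cites \cite{Au1978}, Chap.~II, Prop.~2.4. Your direct argument has the right shape but contains one error and one genuine gap. The error is in the indecomposability step: the inclusion $X_1 \hookrightarrow X$ of a direct summand \emph{is} a split mono, and in any case the left-almost-split property concerns morphisms out of $X$, not into it. Use the projections instead: if $X = X_1 \oplus X_2$ with both summands nonzero, each projection $p_i \colon X \to X_i$ has nonzero kernel, hence is not (split) mono, so $p_i = \beta_i\a$ for some $\beta_i$; then $\id_X = (\iota_1\beta_1 + \iota_2\beta_2)\a$ exhibits $\a$ as a split mono, a contradiction.

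The gap is exactly the point you flag as the main obstacle, and the missing idea is this: for $X$ indecomposable in a Krull--Schmidt category, the subfunctor $\rad(X,-) \subseteq \Hom_\C(X,-)$ consisting of non-split-monomorphisms is the \emph{unique} maximal subfunctor (if some $f \in H(Z)$ were a split mono with retraction $g$, then $\id_X = gf \in H(X)$ and $H = \Hom_\C(X,-)$), so $S_X := \Hom_\C(X,-)/\rad(X,-)$ is simple. Now ``$\a$ left almost split'' says precisely that $\Im\Hom_\C(\a,-) = \rad(X,-)$: the inclusion $\subseteq$ holds because $\a$ not split mono makes every $\beta\a$ non-split-mono, and $\supseteq$ is the factorisation property. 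Hence $F \cong S_X$. In your language: if $\bar\gamma \in G(Z)$ is nonzero then $\gamma$ does not factor through $\a$, so by the left-almost-split hypothesis $\gamma$ \emph{is} a split mono; a retraction $h$ gives $\bar{\id_X} = \overline{h\gamma} \in G(X)$, whence $G = F$. The same identification of $\rad(X,-)$ as the unique maximal subfunctor handles the converse in one line (the kernel of $\Hom_\C(X,-) \twoheadrightarrow F$ must equal $\rad(X,-)$), making your case split on the image of $\Hom_\C(Y',-)\to F$ unnecessary.
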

\begin{proof}
 See Proposition~2.4 in Chap.~II of \cite{Au1978}.
\end{proof}

\begin{lem}\label{le:almost-split2}
For an indecomposable object $X\in\C$ are equivalent:
\begin{enumerate} 
\item There is an almost split sequence $0\to X\to Y\to Z\to 0$ in $\C$.
\item There is a simple object $S\in\Eff(\C,\Ab)$ such that
  $S(X)\neq 0$.
\end{enumerate}
\end{lem}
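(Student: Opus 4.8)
The strategy is to match almost split sequences with simple effaceable functors via Lemma~\ref{le:almost-split}: an almost split sequence $0\to X\xto{\a}Y\xto{\b}Z\to 0$ gives rise to the functor $\Coker\Hom_\C(\a,-)$, which is simple because $\a$ is left almost split and effaceable because $\a$ is a monomorphism, and conversely such a functor must arise in this way. For $(1)\Rightarrow(2)$, let $0\to X\xto{\a}Y\xto{\b}Z\to 0$ be almost split and set $F=\Coker\Hom_\C(\a,-)$. As $\a$ is a monomorphism, $F\in\Eff(\C,\Ab)$ by definition, and as $\a$ is left almost split, $F$ is simple by Lemma~\ref{le:almost-split}. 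Evaluating the presentation at $X$ gives $F(X)=\Coker(\Hom_\C(Y,X)\xto{\Hom_\C(\a,X)}\End_\C(X))$, which is nonzero because $\id_X$ lies in the image only if $\a$ is a split monomorphism. So $S:=F$ has the required property.

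For $(2)\Rightarrow(1)$ the key point is to produce a \emph{left almost split monomorphism} with source $X$; by Lemma~\ref{le:almost-split} it suffices to exhibit a monomorphism $\a\colon X\to Y_1$ in $\C$ with $\Coker\Hom_\C(\a,-)\cong S$. Effaceability of $S$ provides only \emph{some} monomorphism $\mu\colon X_1\to Y_1$ with $S\cong\Coker\Hom_\C(\mu,-)$; write $q\colon\Hom_\C(X_1,-)\to S$ for the resulting epimorphism, so $\Ker q=\Im\Hom_\C(\mu,-)$. Decomposing $X_1$ into indecomposables and using that $S$ is simple and nonzero, $q$ is nonzero on some indecomposable summand $X_1'$ of $X_1$, whence $S(X_1')\neq0$; since $S$ is simple with $S(X)\neq0$, standard facts about the functor category $\Fp(\C,\Ab)$ in the Krull--Schmidt setting (cf.\ \cite{Au1978}) identify $\Hom_\C(X,-)\to S$ as a projective cover and show that the indecomposables $W$ with $S(W)\neq0$ are exactly those having $X$ as a direct summand. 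Hence $X_1'\cong X$, so $X$ is a direct summand of $X_1$; pick a split monomorphism $g\colon X\to X_1$ and set $\a:=\mu g$, a monomorphism. The projective cover property makes $q$ factor as $\Hom_\C(X_1,-)\xto{\Hom_\C(g,-)}\Hom_\C(X,-)\to S$---this is exactly what $g$ being a split monomorphism encodes---so $\Hom_\C(g,-)$ is an epimorphism carrying $\Ker q=\Im\Hom_\C(\mu,-)$ onto the kernel of $\Hom_\C(X,-)\to S$; therefore $\Coker\Hom_\C(\a,-)\cong S$, and $\a$ is left almost split by Lemma~\ref{le:almost-split}. Finally, discarding a superfluous direct summand of the target changes none of these properties, so we may assume $\a\colon X\to Y$ is in addition left minimal.

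It remains to complete the left almost split monomorphism $\a\colon X\to Y$ to an almost split sequence. Put $Z=\Coker\a$ with projection $\b\colon Y\to Z$, giving a non-split short exact sequence $0\to X\xto{\a}Y\xto{\b}Z\to 0$. To see that $\b$ is right almost split, take $h\colon W\to Z$ that is not a split epimorphism and pull the sequence back along $h$, obtaining a short exact sequence $0\to X\to W\times_Z Y\to W\to 0$. If it splits, $h$ factors through $\b$; if it does not split, the canonical monomorphism $X\to W\times_Z Y$ is not a split monomorphism and hence, $\a$ being left almost split, factors through $\a$, whereupon left minimality of $\a$ forces the projection $W\times_Z Y\to Y$ to be a split epimorphism, from which one reads off a section of $h$, contradicting the choice of $h$. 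Thus $\b$ is right almost split and $0\to X\to Y\to Z\to 0$ is almost split. (Alternatively, the duality \eqref{eq:eff} turns $S$ into a simple object $S^{\scriptscriptstyle\vee}\in\Eff(\C^\op,\Ab)$ with $S^{\scriptscriptstyle\vee}(Z)\neq0$, and the previous paragraph applied in $\C^\op$ shows directly that $\b$ is right almost split.) The main obstacle is the middle step: effaceability yields only some monomorphism $\mu$, and transporting it---using the Krull--Schmidt structure of $\C$ and of $\Fp(\C,\Ab)$---to a monomorphism with source exactly $X$ without losing injectivity is where the real work lies; the rest follows from Lemma~\ref{le:almost-split} together with the standard completion of a left almost split monomorphism to an almost split sequence.
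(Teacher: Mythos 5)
Your direction (1)$\Rightarrow$(2) is fine and is the same as the paper's (via Lemma~\ref{le:almost-split}). In (2)$\Rightarrow$(1) there are two problems. The smaller one: after locating a summand $X_1'\cong X$ of $X_1$ you ``pick a split monomorphism $g\colon X\to X_1$'' and assert that $q$ then factors through $\Hom_\C(g,-)$ because $g$ is a split monomorphism. That inference is false for an arbitrarily chosen $g$: if, say, $X_1\cong X\oplus X$ and both components of $q$ under $\Hom_\C(X_1,-)\cong\Hom_\C(X,-)\oplus\Hom_\C(X,-)$ are non-zero, then $q$ does not factor through the projection onto either single summand. The step is repairable by reversing the order: since $\Hom_\C(X,-)\to S$ is a projective cover and $q$ is an epimorphism from a projective object, $q$ factors through it via an epimorphism $\Hom_\C(X_1,-)\to\Hom_\C(X,-)$, which by Yoneda equals $\Hom_\C(g,-)$ for some $g\colon X\to X_1$; evaluating at $X$ shows such a $g$ is automatically a split monomorphism, and with this $g$ your computation $\Coker\Hom_\C(\mu g,-)\cong S$ and the appeal to Lemma~\ref{le:almost-split} are correct.

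The serious gap is the final step, completing the minimal left almost split monomorphism $\a$ to an almost split sequence. In the non-split pullback case you do get, from left almost splitness plus minimality, that the projection $p\colon W\times_Z Y\to Y$ is a split epimorphism; but a section of $p$ only yields a morphism $s\colon Y\to W$ with $hs=\b$, i.e.\ that $\b$ factors through $h$ --- it does not ``read off a section of $h$'', and no contradiction is reached at this point. What is missing is exactly the non-trivial classical step: one must first prove that $Z=\Coker\a$ is indecomposable (using minimality of $\a$, e.g.\ via pullbacks along the inclusions of direct summands of $Z$ and additivity of $\Ext^1$) and then still argue that $\b$ is right almost split. Your parenthetical alternative via the duality \eqref{eq:eff} has the same hole: it shows that $\Coker\Hom_\C(-,\b)\cong S^{\scriptscriptstyle\vee}$ is simple, but the dual of Lemma~\ref{le:almost-split} also requires $Z$ to be indecomposable, which is nowhere established; and it genuinely needs proof, since for a non-minimal left almost split monomorphism such as $\smatrix{\a\\0}\colon X\to Y\oplus W$ the cokernel functor is still simple while the cokernel object decomposes and the corresponding epimorphism is not right almost split, so simplicity alone cannot suffice. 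The paper bypasses all of this: since $S$ is effaceable and $\Fp(\C,\Ab)$ is Krull--Schmidt, $S$ has a minimal projective presentation $0\to\Hom_\C(Z,-)\to\Hom_\C(Y,-)\to\Hom_\C(X,-)\to S\to0$, and Proposition~4.4 in Chap.~II of \cite{Au1978} states precisely that the underlying sequence $0\to X\to Y\to Z\to0$ is almost split; your argument is in effect an attempt to reprove that proposition, and its hardest part is the one left unproved.
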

\begin{proof}
(1) $\Rightarrow$ (2): Use Lemma~\ref{le:almost-split}.

(2) $\Rightarrow$ (1): The functor $S$ admits a minimal projective presentation
\[0\lto\Hom_\C(Z,-)\lto\Hom_\C(Y,-)\lto \Hom_\C(X,-)\lto S\lto 0\] in $\Fp(\C,\Ab)$ 
since $\C$ is Krull-Schmidt. It follows from Proposition~4.4 in
Chap.~II of \cite{Au1978} that the underlying sequence  $0\to X\to
Y\to Z\to 0$ is almost split in $\C$.
\end{proof}

\subsection*{Length and support}

Let $\C$ be an essentially small additive category and suppose that
$\C$ is Krull-Schmidt. Let $\ind\C$ denote a representative set of the
isoclasses of indecomposable objects. For an additive functor
$F\colon\C\to\Ab$ set
\[\supp (F)=\{X\in\ind\C\mid FX\neq 0\}\]
and let $\ell(F)$ denote the composition length of $F$ in the category
of additive functors $\C\to\Ab$. 

\begin{lem}\label{le:fun-length}
  For an additive functor $F\colon\C\to\Ab$ we
  have \[\ell(F)=\sum_{X\in\ind\C}\ell_{\End_\C(X)}(FX).\]
\end{lem}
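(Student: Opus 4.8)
The plan is to reduce the statement to the case of a simple functor by dévissage, and then identify simple functors concretely. First I would recall that the category $(\C,\Ab)$ of additive functors $\C\to\Ab$ is abelian, and that $\ell$ is additive on short exact sequences of functors; the right-hand side $F\mapsto\sum_{X\in\ind\C}\ell_{\End_\C(X)}(FX)$ is likewise additive, since evaluation at a fixed $X$ is exact and $\ell_{\End_\C(X)}(-)$ is additive on short exact sequences of $\End_\C(X)$-modules. Both sides also vanish on the zero functor. Hence it suffices to check the identity on simple functors, and for a simple functor $S$ one must show that $\sum_{X}\ell_{\End_\C(X)}(SX)=1$, i.e. that there is exactly one indecomposable $X$ (up to iso) with $SX\neq 0$, and that for this $X$ the $\End_\C(X)$-module $SX$ is simple.

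Next I would use the standard description of simple objects in a functor category over a Krull–Schmidt category. For an indecomposable object $X$ with $\Gamma=\End_\C(X)$ and residue division ring (or at least semisimple quotient) $\Gamma/\rad\Gamma$, one builds the functor $S_X$ sending $Y$ to $\Hom_\C(X,Y)/(\text{radical maps})$; concretely $S_X(Y)=\Hom_\C(X,Y)\big/\rad(X,Y)$ where $\rad(-,-)$ denotes the radical of the category. These $S_X$ are simple, pairwise non-isomorphic as $X$ ranges over $\ind\C$, $S_X(Y)=0$ unless $Y\cong X$, and $S_X(X)=\Gamma/\rad\Gamma$ is a simple $\Gamma$-module. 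Conversely every simple functor is of this form: if $S$ is simple, pick $X$ with $SX\neq0$ and an epimorphism $\Hom_\C(X,-)\to S$ (possible since the representable functors are projective generators of the category of functors that are "locally finite" in the relevant sense); one checks $X$ may be taken indecomposable, and that the kernel of $\Hom_\C(X,-)\to S$ contains all radical morphisms, forcing $S\cong S_X$. Granting this classification, the simple-functor case is immediate: $\sum_Y\ell_{\End_\C(Y)}(S_X(Y))=\ell_{\End_\C(X)}(\Gamma/\rad\Gamma)=1$.

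The main obstacle is a finiteness/well-definedness issue: for a general additive functor $F$ on an arbitrary essentially small Krull–Schmidt $\C$ there is no reason for $\ell(F)$ to be finite, nor for the sum on the right to be finite, and one must make sure the asserted equality is read in $\mathbb N\cup\{\infty\}$ so that it also covers the infinite case. I would handle this by proving the finite-length case by induction on $\ell(F)$ using a composition series and the additivity observed above, and then arguing that $\ell(F)=\infty$ exactly when the right-hand sum is infinite — either because some $\ell_{\End_\C(X)}(FX)$ is infinite (take $X$ achieving this; then $\Hom_\C(X,-)$ maps onto $FX$-related subquotients giving infinitely many composition factors of $F$), or because infinitely many summands are nonzero (the corresponding $S_X$ are pairwise non-isomorphic composition factors of $F$). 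A clean way to organise this is to invoke the subfunctor filtration of $F$ by the "supports": for each finite subset, truncate and apply the finite case, then pass to the supremum. This makes the bookkeeping routine once the classification of simple functors in the second paragraph is in hand.
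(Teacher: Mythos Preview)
Your proposal is correct and follows essentially the same approach as the paper: reduce to simple functors by additivity of both sides on short exact sequences, verify that a simple functor $S$ has $\supp(S)=\{X\}$ with $\ell_{\End_\C(X)}(SX)=1$, and then induct on $\ell(F)$. You are more explicit than the paper both in justifying the classification of simple functors and in treating the equality in $\mathbb{N}\cup\{\infty\}$ (the paper's terse ``induction on $\ell(F)$'' silently leaves the infinite case to the reader).
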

\begin{proof}
  Let $F\colon \C\to\Ab$ be a simple functor and $FX\neq 0$ for some
  $X\in\ind\C$. Then we have $\supp(F)=\{X\}$ and
  $\ell(F)=1=\ell_{\End_\C(X)}(FX)$. From this the assertion follows
  by induction on $\ell(F)$.
\end{proof}

\begin{lem}\label{le:supp-homfinite}
  Let $\C$ be Hom-finite and $F\colon\C\to\Ab$ a finitely generated
  functor. Then $\ell(F)$ is finite if and only if $\supp(F)$ is
  finite.
\end{lem}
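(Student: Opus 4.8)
The plan is to prove both implications using Lemma~\ref{le:fun-length}, which reduces the length of $F$ to the sum $\sum_{X\in\ind\C}\ell_{\End_\C(X)}(FX)$. If $\ell(F)$ is finite, then only finitely many summands can be nonzero, so $\supp(F)=\{X\in\ind\C\mid FX\neq 0\}$ is finite; this direction uses neither the Hom-finiteness nor the finite generation hypothesis.

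For the converse, suppose $\supp(F)$ is finite. Since $F$ is finitely generated, choose an epimorphism $\Hom_\C(Z,-)\to F$ for some object $Z$ in $\C$; writing $Z=\bigoplus_k Z_k$ with each $Z_k$ indecomposable, we get an epimorphism $\bigoplus_k\Hom_\C(Z_k,-)\to F$. Hence for each $X\in\ind\C$ the $\End_\C(X)$-module $FX$ is a quotient of $\bigoplus_k\Hom_\C(Z_k,X)$, so by Hom-finiteness $\ell_{\End_\C(X)}(FX)\le\sum_k\ell_{\End_\C(X)}(\Hom_\C(Z_k,X))<\infty$. Thus each term in the sum from Lemma~\ref{le:fun-length} is finite, and since the support is finite only finitely many of these terms are nonzero, so $\ell(F)=\sum_{X\in\supp(F)}\ell_{\End_\C(X)}(FX)$ is a finite sum of finite numbers, hence finite.

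I expect the only delicate point to be bookkeeping: making sure that ``finitely generated functor'' is being used in the sense that admits a single representable cover $\Hom_\C(Z,-)\twoheadrightarrow F$ (which is the standard meaning), and invoking Lemma~\ref{le:fun-length} correctly for functors that need not themselves be simple or even finite length a priori — but that lemma is stated for arbitrary additive functors $\C\to\Ab$, so there is no obstruction there. The argument is otherwise a routine combination of Krull-Schmidt decomposition, the Yoneda-type surjection, and the additivity of length.
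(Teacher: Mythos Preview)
Your proof is correct and follows essentially the same route as the paper's: both directions are deduced from Lemma~\ref{le:fun-length}, with the converse using that $F$ is a quotient of a representable functor together with Hom-finiteness to bound each $\ell_{\End_\C(X)}(FX)$. The decomposition of $Z$ into indecomposables is harmless but unnecessary, since Hom-finiteness in this paper is defined for all pairs of objects, not just indecomposables.
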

\begin{proof}
Apply Lemma~\ref{le:fun-length}. Clearly, $\supp(F)$ is finite when
$\ell(F)$ is finite. For the converse observe that
$\ell_{\End_\C(X)}(FX)$ is finite for all $X\in\C$ since $\C$ is
Hom-finite and $F$ is the quotient of a representable functor.
\end{proof}

\begin{rem}
Let $\Fp(\C,\Ab)$ be abelian and $F\in \Fp(\C,\Ab)$. Then $\ell(F)$
does not depend on the ambient category since every simple object in
$\Fp(\C,\Ab)$ is simple in the category of all additive functors
$\C\to\Ab$.
\end{rem}

Let $\C$ be Hom-finite and fix an object $X\in\C$. The assignment
\[\chi_X\colon\Fp(\C,\Ab)\lto\bbZ, \quad F\mapsto \ell_{\End_\C(X)}(FX)\]
induces a homomorphism $K_0(\Fp(\C,\Ab))\lto\bbZ$.

\begin{lem}\label{le:supp-K0}
Let $\C$ be Hom-finite. Given functors  $F$ and $(F_i)_{i\in I}$  in $\Fp(\C,\Ab)$, 
\[ [F]\in\langle [F_i]\mid i\in I\rangle\subseteq K_0(\Fp(\C,\Ab))
\qquad\text{implies}\qquad
\supp(F)\subseteq\bigcup_{i\in I}\supp(F_i).\]
\end{lem}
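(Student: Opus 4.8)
The plan is to reduce the statement about membership in $K_0(\Fp(\C,\Ab))$ to a statement about the homomorphisms $\chi_X$ introduced just before the lemma. The key observation is that for a fixed $X\in\ind\C$, the assignment $\chi_X\colon F\mapsto \ell_{\End_\C(X)}(FX)$ is additive on short exact sequences in $\Fp(\C,\Ab)$ (because $(-)(X)$ is exact on $\Fp(\C,\Ab)$ and composition length is additive on short exact sequences of $\End_\C(X)$-modules of finite length — and these lengths are finite since $\C$ is Hom-finite), hence it factors through a group homomorphism $\bar\chi_X\colon K_0(\Fp(\C,\Ab))\to\bbZ$. Moreover $\chi_X(F)\geq 0$ for every $F$, and $\chi_X(F)>0$ precisely when $X\in\supp(F)$.

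First I would argue contrapositively. Suppose $X\in\supp(F)$ but $X\notin\bigcup_{i\in I}\supp(F_i)$; I want to derive $[F]\notin\langle[F_i]\mid i\in I\rangle$. Apply $\bar\chi_X$: every element of the subgroup $\langle[F_i]\mid i\in I\rangle$ is a finite $\bbZ$-linear combination $\sum_i n_i[F_i]$, and since $X\notin\supp(F_i)$ for all $i$ we have $\chi_X(F_i)=0$, so $\bar\chi_X$ vanishes on the whole subgroup. On the other hand $\bar\chi_X([F])=\chi_X(F)=\ell_{\End_\C(X)}(FX)>0$ because $X\in\supp(F)$. Hence $[F]$ cannot lie in that subgroup, which is the desired conclusion. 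Running this argument for every $X\in\supp(F)$ shows that if $[F]\in\langle[F_i]\mid i\in I\rangle$, then each such $X$ must occur in some $\supp(F_i)$, i.e. $\supp(F)\subseteq\bigcup_{i\in I}\supp(F_i)$.

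The only point requiring a little care is the finiteness needed to define $\chi_X$ and $\bar\chi_X$ in the first place: one must know $\ell_{\End_\C(X)}(FX)<\infty$ for all $F\in\Fp(\C,\Ab)$ and all $X\in\ind\C$. This is exactly the content of the Hom-finiteness hypothesis together with the fact that $F$ is a quotient of a representable functor $\Hom_\C(Y,-)$, so $FX$ is a quotient of $\Hom_\C(Y,X)$, which has finite length over $\End_\C(X)$; this is the same reasoning used in the proof of Lemma~\ref{le:supp-homfinite}. I do not expect any genuine obstacle here — the statement is essentially a formal consequence of the existence of the homomorphisms $\bar\chi_X$ — but the one subtlety worth flagging is that one should check additivity of $\chi_X$ on \emph{all} short exact sequences in $\Fp(\C,\Ab)$, not merely split ones, which is what licenses the factorisation through $K_0(\Fp(\C,\Ab))$ rather than only through $K_0(\Fp(\C,\Ab),0)$; this is immediate from exactness of evaluation at $X$.
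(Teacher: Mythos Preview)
Your proposal is correct and follows essentially the same approach as the paper's own proof: both use the homomorphism $\chi_X\colon K_0(\Fp(\C,\Ab))\to\bbZ$ to detect membership in the support, observing that $\chi_X(F)=0$ iff $X\notin\supp(F)$ and that $\chi_X$ vanishes on the subgroup generated by the $[F_i]$ whenever $X\notin\bigcup_i\supp(F_i)$. Your write-up is simply more explicit about the well-definedness and additivity of $\chi_X$, which the paper takes for granted from the sentence preceding the lemma.
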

\begin{proof}
  Fix $X\in\ind\C$. We have $X\not\in\supp(F)$ if and only if
  $\chi_X(F)=0$.  Thus $X\not\in\bigcup_{i\in I}\supp(F_i)$ implies
  $\chi_X(F_i)=0$ for all $i\in I$.  If $[F]$ is generated by the
  $[F_i]$, then this implies $\chi_X(F)=0$. Thus $X\not\in\supp(F)$.
\end{proof}

\subsection*{Relations for Grothendieck groups}

Let $\C$ be an essentially small abelian category and 
consider the Yoneda functor
\[\C\lto\Fp(\C,\Ab),\quad X\mapsto h_X:=\Hom_\C(X,-).\]
 
\begin{lem}\label{le:basis-K0}
  The Yoneda functor induces an isomorphism of abelian groups
\[K_0(\C,0)\xto{\sim} K_0(\Fp(\C,\Ab)).\]
\end{lem}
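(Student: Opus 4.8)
The plan is to exhibit a map in the opposite direction and check it is inverse to the map induced by the Yoneda functor. First I would define a homomorphism $\varphi\colon K_0(\Fp(\C,\Ab))\to K_0(\C,0)$. Every $F\in\Fp(\C,\Ab)$ has a presentation $h_Y\to h_X\to F\to 0$, i.e. $F=\Coker\Hom_\C(\alpha,-)$ for some $\alpha\colon X\to Y$ in $\C$; I would try to set $\varphi([F])=[X]-[Y]$ (with suitable sign conventions so that $[h_X]\mapsto[X]$). The first task is to show this is well defined on objects, independent of the chosen presentation, and then additive on short exact sequences of functors. For well-definedness one uses that any two projective presentations of $F$ in $\Fp(\C,\Ab)$ are related by a chain of elementary operations (adding a split summand $h_X\xrightarrow{\id}h_X$, and comparing via the horseshoe-type argument), each of which changes $[X]-[Y]$ by a class of the form $[X']-[X']=0$ in $K_0(\C,0)$; here the key point is that $\Hom_\C(X',-)$ is projective in $\Fp(\C,\Ab)$ and that a split epi $h_X\to h_{X'}$ in $\Fp(\C,\Ab)$ comes from a split epi $X\to X'$ in $\C$, by Yoneda. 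For additivity on a short exact sequence $0\to F'\to F\to F''\to 0$, I would lift it to a short exact sequence of projective presentations (the horseshoe lemma in the abelian category $\Fp(\C,\Ab)$, using that $h_X$-objects are projective), read off that the middle terms split as direct sums of the outer ones up to the $K_0(\C,0)$-relations, and conclude $\varphi([F])=\varphi([F'])+\varphi([F''])$.

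Next I would check $\varphi\comp\psi=\id$, where $\psi\colon K_0(\C,0)\to K_0(\Fp(\C,\Ab))$ is induced by $X\mapsto [h_X]$ (this is a homomorphism since $h_{X'\oplus X''}\cong h_{X'}\oplus h_{X''}$, so it respects the split-exact relations defining $K_0(\C,0)$). This is immediate: $\varphi(\psi([X]))=\varphi([h_X])=[X]$ using the trivial presentation $0\to h_X\to h_X\to 0$.

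Finally I would check $\psi\comp\varphi=\id$. Given $F$ with presentation $h_Y\xrightarrow{\Hom(\alpha,-)}h_X\to F\to 0$, let $K$ be the kernel subfunctor of $h_X$, so $0\to K\to h_X\to F\to 0$ and $h_Y\twoheadrightarrow K$. Then $[F]=[h_X]-[K]$ in $K_0(\Fp(\C,\Ab))$, and I must show $[K]=[h_Y]$, equivalently $\psi(\varphi([F]))=[h_X]-[h_Y]=[F]$. The point is that $K$ is generated by one object (it is a quotient of $h_Y$), hence itself finitely presented, and the subtlety is that $K$ need not be representable — so one cannot simply say $[K]=[h_Y]$. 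The resolution: iterate. Writing a presentation $h_W\to h_Y\to K\to 0$ gives $[K]=[h_Y]-[h_W]+\dots$, and one needs the telescoping to terminate. In fact it does after one more step, because $\Eff(\C,\Ab)$ is a Serre subcategory and the kernel of $h_Y\to K$ is effaceable; more directly, one uses that in $\Fp(\C,\Ab)$ the representable functors have projective dimension at most $2$ (the Auslander resolution: every $F$ has a projective resolution $0\to h_{Z}\to h_Y\to h_X\to F\to 0$ of length $\le 2$, cf. the discussion around \eqref{eq:eff} and \cite{Au1978}), so $[K]=[h_Y]-[h_Z]$ with $h_Z$ genuinely representable, and then one checks $Z=0$ or absorbs $[h_Z]$ correctly. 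I expect this termination/finiteness point — controlling the syzygies of representable functors inside $\Fp(\C,\Ab)$ and matching the alternating sum with the two-term presentation — to be the main obstacle; everything else is formal manipulation with Yoneda and the horseshoe lemma.
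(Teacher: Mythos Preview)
There is a genuine gap: your proposed inverse $\varphi([F])=[X]-[Y]$, read off from a two-term presentation $h_Y\to h_X\to F\to 0$, is not well defined. The map $h_Y\to h_X$ need not be a monomorphism, so different presentations of the same $F$ give different values. Concretely, take $F=0$: the presentation $0\to 0\to 0$ yields $0$, while for any $Y\neq 0$ the presentation $h_Y\to 0\to 0\to 0$ yields $-[Y]$. These two presentations are not connected by your elementary moves (adding a split summand $h_{X'}\xrightarrow{\id}h_{X'}$), and Schanuel's lemma only controls the kernel $K=\Ker(h_X\to F)$, not the chosen cover $h_Y\twoheadrightarrow K$. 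So the well-definedness argument fails at the outset, not merely in the final verification of $\psi\circ\varphi=\id$.

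The repair is exactly the observation you reach at the end but do not commit to: since $\C$ is abelian, for any $\alpha\colon X\to Y$ with cokernel $Z$ one has a genuine projective resolution
\[0\lto h_Z\lto h_Y\lto h_X\lto F\lto 0,\]
so every object of $\Fp(\C,\Ab)$ has projective dimension at most $2$. One then defines $\varphi([F])=[X]-[Y]+[Z]$ as the Euler characteristic of a finite projective resolution; this is well defined and additive by the standard resolution argument, and the two composites are immediately the identity. That is precisely the paper's proof: the Yoneda functor identifies $\C$ with the projectives in $\Fp(\C,\Ab)$, and every object there has a finite projective resolution, whence $K_0(\C,0)\cong K_0(\Fp(\C,\Ab))$ by the resolution theorem. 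Your ``absorbing $[h_Z]$ correctly'' is not a bookkeeping issue to be handled after the fact---it has to be built into the definition of $\varphi$ from the start.
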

\begin{proof}
The Yoneda functors identifies $\C$ with the full subcategory of
projective objects in $\Fp(\C,\Ab)$. From this the assertion follows
since every object in $\Fp(\C,\Ab)$ admits a finite projective resolution.
\end{proof}

Given an almost split sequence $0\to X\to Y\to Z\to 0$ in $\C$, let
$S_X$ denote the corresponding simple functor in $\Fp(\C,\Ab)$ with
$\supp(S_X)=\{X\}$; see Lemma~\ref{le:almost-split2}.

\begin{lem}\label{le:ass-generate}
  Let $\C$ be an abelian category. Then the following are equivalent:
\begin{enumerate}
\item The kernel of $\pi\colon K_0(\C,0)\to K_0(\C)$
  is generated by elements $[X]-[Y]+[Z]$ that are given by almost
  split sequences $0\to X\to Y\to Z\to 0$ in $\C$.
\item $ [F]\in \langle [S_X]\mid 0\to X\to Y\to Z\to 0 \text{
    almost split}\rangle$ for all  $F\in \Eff(\C,\Ab)$.
\end{enumerate}
\end{lem}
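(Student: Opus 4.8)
The plan is to establish the equivalence by identifying, via the Yoneda functor, the kernel of $\pi$ with a subgroup of $K_0(\Fp(\C,\Ab))$ and then translating the relations given by almost split sequences into relations among the classes $[S_X]$. First I would invoke Lemma~\ref{le:basis-K0} to identify $K_0(\C,0)$ with $K_0(\Fp(\C,\Ab))$ through $X\mapsto h_X$. Under this isomorphism, the epimorphism $\pi$ corresponds to the map $K_0(\Fp(\C,\Ab))\to K_0(\C)$ whose kernel I need to describe. The key observation is that this kernel is precisely the subgroup generated by $[F]$ for $F\in\Eff(\C,\Ab)$: indeed, the equivalence \eqref{eq:eff-formula} exhibits $\C$ as the Serre quotient $\Fp(\C^\op,\Ab)/\Eff(\C^\op,\Ab)$, and dually (or by a direct argument using \eqref{eq:fp} and effaceability) the classes of effaceable functors are exactly those killed when passing from $K_0(\C,0)$ to $K_0(\C)$. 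Concretely, an exact sequence $0\to X\xto{\a}Y\xto{\b}Z\to 0$ in $\C$ gives $F=\Coker\Hom_\C(\a,-)\in\Eff(\C,\Ab)$ with $[h_Z]-[h_Y]+[h_X]=[F]$ in $K_0(\Fp(\C,\Ab))$, and every effaceable functor arises this way; since $\Eff(\C,\Ab)$ is a Serre subcategory, the subgroup $\langle[F]\mid F\in\Eff(\C,\Ab)\rangle$ coincides with $\Ker\pi$ under the identification.

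Next I would match up the generators. An element $[X]-[Y]+[Z]\in K_0(\C,0)$ coming from an almost split sequence $0\to X\to Y\to Z\to 0$ corresponds under Yoneda to $[h_Z]-[h_Y]+[h_X]=[S_X]$ in $K_0(\Fp(\C,\Ab))$, using the defining presentation $0\to h_Z\to h_Y\to h_X\to S_X\to 0$ from Lemma~\ref{le:almost-split2} (and the fact, via Lemma~\ref{le:almost-split}, that $S_X$ is the simple functor with support $\{X\}$ attached to the sequence). Thus condition (1) says exactly that $\Ker\pi$, viewed inside $K_0(\Fp(\C,\Ab))$, is generated by the classes $[S_X]$; combined with the identification of $\Ker\pi$ with $\langle[F]\mid F\in\Eff(\C,\Ab)\rangle$, this is visibly equivalent to condition (2): every $[F]$ with $F$ effaceable lies in the subgroup generated by the $[S_X]$. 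For the forward direction one uses that the $[S_X]$ lie in $\Ker\pi$, so they certainly lie inside the group generated by all effaceable classes, and conversely; the content is that the two generating sets span the same subgroup.

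The step I expect to be the main obstacle is verifying cleanly that $\Ker\pi$, transported to $K_0(\Fp(\C,\Ab))$, equals $\langle[F]\mid F\in\Eff(\C,\Ab)\rangle$ — in particular the inclusion that every class killed by $\pi$ is a combination of effaceable classes. The natural approach is to exploit the exact functor \eqref{eq:eff-formula}: it induces a surjection $K_0(\Fp(\C^\op,\Ab))\to K_0(\C)$ whose kernel is generated by the effaceable functors on $\C^\op$, since any object of a Serre subcategory contributes its class to the kernel and conversely a class dying in the quotient is a difference of such. Then I would transfer this statement from $\C^\op$ back to $\C$ either by applying the whole discussion to $\C^\op$ and dualising, or by directly checking that $\pi$ on $K_0(\C,0)\cong K_0(\Fp(\C,\Ab))$ has the analogous description — here one must be slightly careful about the variance, but the duality \eqref{eq:eff} between $\Eff(\C,\Ab)$ and $\Eff(\C^\op,\Ab)$ (which preserves composition length and hence induces a compatible statement on Grothendieck groups) resolves this. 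Once that identification is in place, the equivalence of (1) and (2) is a formal comparison of generating sets for the same subgroup, requiring no further work.
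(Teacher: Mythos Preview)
Your proposal is correct and follows essentially the same route as the paper: identify $K_0(\C,0)\cong K_0(\Fp(\C,\Ab))$ via Lemma~\ref{le:basis-K0}, observe that an exact sequence $\eta$ contributes $[X]-[Y]+[Z]\leftrightarrow[F_\eta]$ with $F_\eta$ effaceable, and note that almost split sequences give precisely the $[S_X]$. The step you flag as the ``main obstacle'' is in fact immediate and needs neither \eqref{eq:eff-formula} nor the duality \eqref{eq:eff}: by definition $\Ker\pi$ is generated by the elements $[X]-[Y]+[Z]$ coming from \emph{all} exact sequences in $\C$, and since every effaceable functor arises as some $F_\eta$, these generators correspond exactly to $\{[F]\mid F\in\Eff(\C,\Ab)\}$ under the Yoneda identification.
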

\begin{proof}
An exact sequence  $\eta\colon 0\to X\to Y\to Z\to 0$ in $\C$ gives rise to
an exact sequence 
\[0\lto\Hom_\C(Z,-)\lto\Hom_\C(Y,-)\lto\Hom_\C(X,-)\lto F_\eta \lto 0\]
in  $\Fp(\C,\Ab)$ with $[F_\eta]=[h_X]-[h_Y]+[h_Z]$. The assertion
then follows by identifying $[X]$ with $[h_X]$ for all $X\in\C$,
keeping in mind  Lemma~\ref{le:basis-K0}.
\end{proof}

\begin{prop}\label{pr:eff-ass}
  Let $\C$ be an essentially small abelian Krull-Schmidt
  category. Consider the following conditions:
\begin{enumerate}
\item Every effaceable finitely presented functor $\C\to\Ab$ has finite
  length.
\item The kernel of $\pi\colon K_0(\C,0)\to K_0(\C)$
  is generated by elements $[X]-[Y]+[Z]$ that are given by almost
  split sequences $0\to X\to Y\to Z\to 0$ in $\C$.
\end{enumerate}
Then \emph{(1)} implies \emph{(2)} and the converse holds when $\C$ is Hom-finite.
\end{prop}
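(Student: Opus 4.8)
The plan is to translate condition (2), via Lemma~\ref{le:ass-generate}, into the assertion that $[F]$ lies in the subgroup $\langle[S_X]\mid 0\to X\to Y\to Z\to 0\text{ almost split}\rangle$ of $K_0(\Fp(\C,\Ab))$ for every $F\in\Eff(\C,\Ab)$, and then to control this membership through composition factors on the one side and supports on the other.

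For (1) $\Rightarrow$ (2) I would argue as follows. Given $F\in\Eff(\C,\Ab)$, the hypothesis gives $\ell(F)<\infty$, so $F$ has a finite composition series in $\Fp(\C,\Ab)$. Since $\Eff(\C,\Ab)$ is a Serre subcategory, every composition factor $S$ is a simple object of $\Eff(\C,\Ab)$, and by Lemma~\ref{le:fun-length} it satisfies $\supp(S)=\{X\}$ for a single indecomposable $X$. Lemma~\ref{le:almost-split2} then furnishes an almost split sequence starting at $X$, and $S$ must be the associated functor $S_X$: it has projective cover $h_X$, and minimal projective presentations are unique since $\C$ is Krull-Schmidt. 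Therefore $[F]=\sum[S_{X_i}]$, summed over the composition factors, and Lemma~\ref{le:ass-generate} yields (2). This implication does not use Hom-finiteness.

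For the converse, assume (2) and that $\C$ is Hom-finite, and let $F\in\Eff(\C,\Ab)$; note that $F$ is finitely generated. By Lemma~\ref{le:ass-generate} we may write $[F]=\sum_{i=1}^{n}a_i[S_{X_i}]$ in $K_0(\Fp(\C,\Ab))$ with each $S_{X_i}$ arising from an almost split sequence and $n$ finite. Applying Lemma~\ref{le:supp-K0} gives $\supp(F)\subseteq\bigcup_{i=1}^{n}\supp(S_{X_i})=\{X_1,\dots,X_n\}$, so $\supp(F)$ is finite; Lemma~\ref{le:supp-homfinite} then gives $\ell(F)<\infty$, which is (1).

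The crucial point --- and the place where care is needed --- is that membership of $[F]$ in the subgroup generated by the classes $[S_X]$ means it is a \emph{finite} integral combination of them, so that Lemma~\ref{le:supp-K0} applies with a finite index set even though $\C$ may have infinitely many almost split sequences; passing through Lemma~\ref{le:ass-generate} is exactly what puts the Grothendieck-group hypothesis into this usable shape. The forward implication, by contrast, is essentially formal once one knows that every simple effaceable functor has the form $S_X$.
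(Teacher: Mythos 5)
Your proposal is correct and follows essentially the same route as the paper: identify the composition factors of a finite-length effaceable functor with the simples $S_X$ coming from almost split sequences via Lemma~\ref{le:almost-split2}, translate through Lemma~\ref{le:ass-generate}, and for the converse combine Lemma~\ref{le:supp-K0} (with the finite family of $S_{X_i}$ appearing in the expression of $[F]$) with Lemma~\ref{le:supp-homfinite}. Your explicit remark that membership in the subgroup means a finite integral combination, and that the forward implication needs no Hom-finiteness, merely spells out what the paper's proof leaves implicit.
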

\begin{proof}
  (1) $\Rightarrow$ (2): Let $S$ be a simple composition factor of an
  effaceable functor. Choosing a minimal projective presentation of
  $S$ in $\Fp(\C,\Ab)$ gives rise to an almost split sequence
  $0\to X\to Y\to Z\to 0$ in $\C$ so that $S=S_X$; see
  Lemma~\ref{le:almost-split2}. Thus condition (2) in
  Lemma~\ref{le:ass-generate} holds.

(2) $\Rightarrow$ (1):  Fix $F\in\Eff(\C,\Ab)$. Then
  Lemmas~\ref{le:supp-K0} and \ref{le:ass-generate} imply that
  $\supp(F)$ is finite. From Lemma~\ref{le:supp-homfinite} it follows
  that $F$ has finite length.
\end{proof}
 
\begin{rem}
The property that every effaceable functor $\C\to\Ab$ has finite
length is self-dual, thanks to the duality \eqref{eq:eff}.
\end{rem}

\section{Effaceable functors and pure-injectives}

The aim of this section is a characterisation of the length categories
$\C$ such that every effaceable finitely presented functor $\C\to\Ab$
has finite length. This involves the study of pure-injective objects,
and we need to embed $\C$ into a Grothendieck category.

\subsection*{Locally finitely presented categories}

Let $\A$ be a Grothendieck category. An object $X\in\A$ is
\emph{finitely presented} if $\Hom_\A(X,-)$ preserves filtered
colimits, and we denote by $\fp\A$ the full subcategory of finitely
presented objects in $\A$. The category $\A$ is called \emph{locally
  finitely presented} if $\A$ has a generating set of finitely
presented objects \cite{Br1970}.

Let $\C$ be an essentially small abelian category. We denote by
$\Lex(\C^\op,\Ab)$ the category of left exact functors $\C^\op\to\Ab$
and set $\A=\Lex(\C^\op,\Ab)$. Observe that $\A$ is a locally finitely
presented Grothendieck category. The category $\C$ identifies
with $\fp\A$ via
the functor
\[\C\lto\A, \quad X\mapsto\Hom_\C(-,X),\]
and every object in $\A$ is a filtered colimit of objects in
$\C$. 

\subsection*{Locally noetherian categories}
 
A Grothendieck category $\A$ is called \emph{locally noetherian} if
$\A$ has a generating set of noetherian objects. In that case finitely
presented and noetherian objects in $\A$ coincide.

A Grothendieck category $\A$ is called \emph{locally finite} if $\A$
has a generating set of finite length objects.  When $\A$ is locally
finite, then every noetherian object has finite length, since any
object is the directed union of finite length subobjects. Thus
finitely presented and finite length objects in $\A$ coincide

\subsection*{Purity}

Let $\C$ be an essentially small abelian category and
$\A=\Lex(\C^\op,\Ab)$.  We recall the following construction from
\cite[\S3]{CB1994}.  Set $\check\C=\Fp(\C,\Ab)^\op$ and
$\bar\A=\Lex(\check\C^\op,\Ab)$. Observe that $\check\C$ is abelian
and identifies with $\fp\bar\A$. The functor
\[h\colon\C\lto\check\C, \quad X\mapsto\Hom_\C(X,-)\]
is right exact and extends to a colimit preserving and fully faithful functor
\[h_!\colon\A\lto\bar\A, \quad X\mapsto\bar X\] 
that makes the following square commutative:
\[\begin{tikzcd}
\C\arrow{r}{h}\arrow{d}&\check\C \arrow{d}\\
\A\arrow{r}{h_!}&\bar\A
\end{tikzcd}
\]
Note that $h_!$ is the left adjoint of $h^*\colon\bar\A\to\A$ given by
$h^*(X)=X\comp h$. 

There is a notion of purity for $\A$. A sequence
$0\to X\to Y\to Z\to 0$ in $\A$ is \emph{pure-exact} if $\Hom_\A(C,-)$
takes it to an exact sequence of abelian groups for all finitely
presented $C\in\A$. An object $M\in\A$ is \emph{pure-injective} if
every pure monomorphism $X\to Y$ induces a surjective map
$\Hom_\A(Y,M)\to\Hom_\A(X,M)$.

\begin{lem} 
\begin{enumerate}
\item A sequence
$0\to X\to Y\to Z\to 0$ in $\A$ is pure-exact if and only if the
induced sequence
$0\to \bar X\to \bar Y\to \bar Z\to 0$ in $\bar\A$ is exact.
\item The functor $X\mapsto \bar X$ identifies  the pure-injective objects
  in $\A$ with the injective objects in $\bar\A$.
\end{enumerate}
\end{lem}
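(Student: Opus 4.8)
The plan is to treat the two statements separately, using throughout that $h_!$ is colimit-preserving and fully faithful and that $\check\C=\fp\bar\A$ sits inside $\bar\A$ via the Yoneda embedding.

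\emph{Part (1).} Since $h_!$ preserves colimits it preserves cokernels, so $\bar Z\cong\Coker_{\bar\A}(\bar X\to\bar Y)$; hence $\bar X\to\bar Y\to\bar Z\to 0$ is exact, and $0\to\bar X\to\bar Y\to\bar Z\to 0$ is exact if and only if $\bar X\to\bar Y$ is a monomorphism. Next, in the locally finitely presented Grothendieck category $\bar\A$ a morphism $W'\to W$ is a monomorphism if and only if $\Hom_{\bar\A}(F,W')\to\Hom_{\bar\A}(F,W)$ is injective for every $F\in\check\C=\fp\bar\A$ (the kernel is the filtered union of its finitely generated subobjects, each of which is a quotient of an object of $\check\C$, so it vanishes as soon as every map from such an object which composes to zero with $W'\to W$ is itself zero). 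So it suffices to compute $\bar X(F):=\Hom_{\bar\A}(F,\bar X)$ for $F\in\check\C$. Writing $X=\colim_i C_i$ with $C_i\in\C=\fp\A$ and using that $h_!$ preserves this colimit while filtered colimits in $\bar\A$ are computed objectwise on $\check\C$, one gets $\bar X(F)\cong\colim_i F(C_i)$; in particular $\bar X(\Hom_\C(C,-))\cong\Hom_\A(C,X)$ for $C\in\C$.

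Every object of $\check\C=\Fp(\C,\Ab)$ has the form $F_\gamma=\Coker\bigl(\Hom_\C(B,-)\xrightarrow{\gamma^*}\Hom_\C(A,-)\bigr)$ for some morphism $\gamma\colon A\to B$ in $\C$, with $\gamma^*$ given by precomposition. Since filtered colimits commute with cokernels and $\Hom_\A(A,-),\Hom_\A(B,-)$ commute with the colimit $X=\colim_i C_i$ (as $A,B$ are finitely presented in $\A$), we obtain $\bar X(F_\gamma)\cong\Coker\bigl(\Hom_\A(B,X)\xrightarrow{\gamma^*}\Hom_\A(A,X)\bigr)$. Therefore $\bar X\to\bar Y$ is a monomorphism if and only if, for every morphism $\gamma\colon A\to B$ in $\C$, a morphism $A\to X$ factors through $\gamma$ as soon as its composite with $X\to Y$ does. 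This is exactly the classical criterion — solvability in $X$ of every finite system of equations with parameters in $X$ that is solvable in $Y$ — for the exact sequence $0\to X\to Y\to Z\to 0$ to be pure, i.e.\ for $\Hom_\A(C,-)$ to be exact on it for all $C\in\C=\fp\A$. I expect the identification of $\bar X(F_\gamma)$ to be the technical heart; granting it, the remaining equivalence is the standard description of pure monomorphisms.

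\emph{Part (2).} One implication follows at once from (1): if $\bar M$ is injective and $\iota\colon X\to Y$ is a pure monomorphism in $\A$, then $h_!(\iota)\colon\bar X\to\bar Y$ is a monomorphism by (1), hence $\Hom_{\bar\A}(\bar Y,\bar M)\to\Hom_{\bar\A}(\bar X,\bar M)$ is surjective; by full faithfulness of $h_!$ this map is $\Hom_\A(Y,M)\to\Hom_\A(X,M)$, so $M$ is pure-injective. For the converse, let $M$ be pure-injective and embed $\bar M$ into an injective object $E$ of $\bar\A$. The crucial point — which I would take from the analysis of the construction in \cite{CB1994} — is that every injective object of $\bar\A$ lies in the essential image of $h_!$ (that image being precisely the fp-injective objects of $\bar\A$, and injective objects are fp-injective). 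Granting this, $E\cong\bar N$ for some $N\in\A$, and by full faithfulness the monomorphism $\bar M\to E=\bar N$ equals $h_!(f)$ for a unique $f\colon M\to N$, which is a pure monomorphism by (1); since $M$ is pure-injective, $f$ splits, hence so does $\bar M\to E$, and $\bar M$ is a direct summand of the injective object $E$, so $\bar M$ is injective. The main obstacle in part (2) is thus the description of the essential image of $h_!$ — equivalently, that every injective object of $\bar\A$ has the form $\bar N$.
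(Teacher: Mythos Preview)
Your proposal is essentially correct and in fact provides considerably more than the paper does: the paper's proof is simply a citation to \cite{CB1994} (Lemma~4 in \S3.3 and Lemma~1 in \S3.5), whereas you have unpacked the argument along the same lines as Crawley-Boevey. The two technical points you single out---the identification $\bar X(F_\gamma)\cong\Coker\bigl(\Hom_\A(B,X)\to\Hom_\A(A,X)\bigr)$ in part~(1), and the fact that the essential image of $h_!$ consists of the fp-injective objects of $\bar\A$ in part~(2)---are precisely what is established in the cited lemmas.

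Two minor remarks. First, in part~(1) you tacitly assume $Z=\Coker_\A(X\to Y)$ when writing $\bar Z\cong\Coker_{\bar\A}(\bar X\to\bar Y)$; strictly speaking the statement concerns an arbitrary sequence, but both conditions (pure-exactness in $\A$, and exactness of the image in $\bar\A$) force the original sequence to be exact, so this is harmless once noted. Second, the passage from your factoring criterion to the paper's definition of pure-exactness (surjectivity of $\Hom_\A(C,Y)\to\Hom_\A(C,Z)$ for all $C\in\C$) uses that $\C=\fp\A$ is abelian: given $c\colon C\to Z$, one lifts it by first passing to $\Coker\gamma$ for a suitable $\gamma$ and using the criterion to split the resulting extension. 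This is standard and your parenthetical remark about systems of equations points in the right direction, but it is worth recording that abelianness of $\C$ is what makes the two formulations coincide.
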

\begin{proof}
See Lemma~4 in \S 3.3  and Lemma~1 in \S 3.5  of \cite{CB1994}. 
\end{proof}

The category $\bar\A$ has enough injective objects. Thus every object
in $\A$ admits a pure monomorphism into a pure-injective object. We
call such a morphism a \emph{pure-injective envelope} if it becomes an
injective envelope in $\bar\A$.

\begin{exm}\label{ex:hom-finite}
Suppose that $\C$ is Hom-finite. Then  every finitely
  presented object in $\A$ is pure-injective. This follows from Theorem~1 in \S 3.5
  of \cite{CB1994}.
\end{exm}

Let us write $\Ind\A$ for a representative set of the indecomposable
pure-injective objects in $\A$, containing exactly one object for each
isomorphism class. For a class $\X\subseteq\bar\A$ set
\[\X^\perp=\{M\in\Ind\A\mid\Hom_{\bar\A}(X,\bar M)=0\text{ for all
}X\in\X\}.\]

We recall the following detection result; see
\cite[Thm.~3.8]{He1997} and \cite[Thm.~4.2]{Kr1997}.

\begin{prop}\label{pr:perp}
  \pushQED{\qed} Let $\X,\Y$ be Serre subcategories of
  $\check\C$. Then
\[\X\subseteq\Y\quad\iff\quad \X^\perp\supseteq\Y^\perp.\qedhere\]
\end{prop}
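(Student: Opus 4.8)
The plan is to prove the two implications separately: the forward one is purely formal, while the converse is established by a localisation argument in $\bar\A$. In effect the statement says that $\X\mapsto\X^\perp$ is an order-embedding (inclusion-reversing and inclusion-reflecting) on Serre subcategories of $\check\C$.

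For ``$\Rightarrow$'' I would simply note that if $\X\subseteq\Y$ and $M\in\Y^\perp$, then $\Hom_{\bar\A}(X,\bar M)=0$ for every $X\in\X\subseteq\Y$, so $M\in\X^\perp$; nothing more is needed.

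For ``$\Leftarrow$'' I would argue by contraposition: assuming $\X\not\subseteq\Y$, I want to produce some $M\in\Ind\A$ that lies in $\Y^\perp$ but not in $\X^\perp$. Fix $X\in\X\setminus\Y$ and regard it as a finitely presented object of $\bar\A$ under the identification $\check\C=\fp\bar\A$. Let $\T\subseteq\bar\A$ be the localising subcategory generated by $\Y$, let $q\colon\bar\A\to\bar\A/\T$ be the (exact, colimit preserving) quotient functor and $s\colon\bar\A/\T\to\bar\A$ its fully faithful right adjoint, so that $\T=\Ker q$ and $qs\cong\Id$. The crucial claim is that $q(X)\neq0$. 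Granting this: since $q$ preserves finitely generated objects, $q(X)$ is a nonzero finitely generated object of the Grothendieck category $\bar\A/\T$, hence has a maximal proper subobject and thus a simple quotient $U$; letting $E$ be the injective envelope of $U$ in $\bar\A/\T$, the object $E$ is indecomposable injective (the injective envelope of a uniform object is indecomposable) and the composite $q(X)\to U\hookrightarrow E$ is nonzero. Now set $I:=s(E)$. As $q$ is exact, $I$ is injective in $\bar\A$; as $s$ is fully faithful, $\End_{\bar\A}(I)\cong\End_{\bar\A/\T}(E)$ is local, so $I$ is indecomposable and nonzero. By the lemma preceding the statement, $I\cong\bar M$ for a pure-injective $M\in\A$, and $\End_\A(M)\cong\End_{\bar\A}(\bar M)$ is local because $h_!$ is fully faithful; thus $M$ is indecomposable and (its isomorphism class) represents a point of $\Ind\A$. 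Finally, via the adjunction isomorphism $\Hom_{\bar\A}(-,I)\cong\Hom_{\bar\A/\T}(q(-),E)$: for $Y\in\Y\subseteq\T=\Ker q$ we get $\Hom_{\bar\A}(Y,\bar M)\cong\Hom_{\bar\A/\T}(0,E)=0$, so $M\in\Y^\perp$; whereas $\Hom_{\bar\A}(X,\bar M)\cong\Hom_{\bar\A/\T}(q(X),E)\neq0$, so $M\notin\X^\perp$. Hence $\Y^\perp\not\subseteq\X^\perp$, which is the required contrapositive.

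The step I expect to be the real obstacle is the crucial claim $q(X)\neq0$, i.e.\ $X\notin\T$. Equivalently, one must know that the localising subcategory of $\bar\A$ generated by a Serre subcategory $\Y$ of $\fp\bar\A$ meets $\fp\bar\A$ again exactly in $\Y$; here $\bar\A$ is locally coherent since $\fp\bar\A=\check\C$ is abelian, and this recovery property is a structural feature of locally coherent Grothendieck categories. This is precisely the input furnished by the cited results of Herzog and Krause; in a self-contained treatment it would require developing enough of the torsion-theoretic machinery for locally coherent categories to see that a finitely presented object which is a directed union of subobjects built from $\Y$ by extensions already belongs to $\Y$. Everything else in the argument is routine localisation theory together with the lemmas already established in the excerpt.
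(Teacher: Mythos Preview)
The paper does not give a proof of this proposition: it is stated as a recalled result, with the $\qed$ box placed at the end of the display and the preceding sentence pointing to \cite[Thm.~3.8]{He1997} and \cite[Thm.~4.2]{Kr1997}. So there is no ``paper's proof'' to compare against beyond those references.

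Your outline is essentially the standard localisation argument that underlies those cited theorems, and it is correct in substance. You have also correctly isolated the nontrivial input: that for a locally coherent Grothendieck category $\bar\A$ the localising subcategory $\T$ generated by a Serre subcategory $\Y\subseteq\fp\bar\A$ satisfies $\T\cap\fp\bar\A=\Y$. One small point worth tightening: your assertion that ``$q$ preserves finitely generated objects'' is not automatic for an arbitrary exact localisation. What is true, and what you need, is that $\T$ is of \emph{finite type} (equivalently, the right adjoint $s$ preserves filtered colimits); then $\Hom_{\bar\A/\T}(q(X),-)\cong\Hom_{\bar\A}(X,s(-))$ preserves filtered colimits, so $q(X)$ is even finitely presented in $\bar\A/\T$, and your maximal-subobject argument goes through. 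This finite-type property is again part of the Herzog--Krause package for locally coherent categories (the bijection between Serre subcategories of $\fp\bar\A$ and localising subcategories of finite type), so it is a second ingredient drawn from the same source as the one you flag, not an independent difficulty. With that adjustment the argument is complete.
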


\subsection*{Effaceable functors}

We compute the orthogonal complement of the category of effaceable functors.

\begin{lem}\label{le:eff-perp}
We have $\Eff(\C,\Ab)^\perp=\{M\in\Ind\A\mid M \text{ is injective}\}$.
\end{lem}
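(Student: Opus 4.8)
The plan is to identify, for an indecomposable pure-injective $M\in\Ind\A$, when $\Hom_{\bar\A}(F,\bar M)=0$ for every effaceable finitely presented functor $F\colon\C\to\Ab$, and to show this happens precisely when $M$ is an injective object of $\A$. First I would unwind the definitions: an object $F\in\check\C=\Fp(\C,\Ab)^\op$ corresponds to a finitely presented functor, and effaceability of $F$ means it admits a presentation $\Hom_\C(Y,-)\to\Hom_\C(X,-)\to F\to 0$ with $X\to Y$ a monomorphism in $\C$; equivalently, $F$ lies in the Serre subcategory $\Eff(\C,\Ab)$. Under the fully faithful $h_!\colon\A\to\bar\A$, the object $\bar M=h_!(M)$ satisfies $\Hom_{\bar\A}(h_!(C),\bar M)\cong\Hom_\A(C,M)$ for $C\in\fp\A=\C$ by adjunction (left adjoint $h_!$ against $h^*$), so the pairing with $\bar M$ against representables recovers $M$ itself.

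Next I would compute $\Hom_{\bar\A}(F,\bar M)$ for an effaceable $F$ by applying $\Hom_{\bar\A}(-,\bar M)$ to the presentation coming from an exact sequence $0\to X\xrightarrow{\a} Y\xrightarrow{\b} Z\to 0$ in $\C$. In $\check\C$ the functor $F=\Coker\Hom_\C(\a,-)$ is the cokernel of $h(\b)\colon h_Y\to h_Z$ read in $\check\C=\Fp(\C,\Ab)^\op$ — equivalently, in $\Fp(\C,\Ab)$ we have the familiar four-term exact sequence, and $F$ sits as (a shift of) the homology. The key point is that $h_!$ is only right exact into $\bar\A$, so $0\to\bar X\to\bar Y\to\bar Z\to 0$ need not be exact; by the cited Lemma it is exact in $\bar\A$ exactly when the original sequence is pure-exact in $\A$. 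Applying $\Hom_{\bar\A}(-,\bar M)$, the vanishing $\Hom_{\bar\A}(F,\bar M)=0$ for all such $F$ translates, after chasing the exact sequences, into the statement that $\Hom_\A(\b,M)\colon\Hom_\A(Z,M)\to\Hom_\A(Y,M)$... rather $\Hom_\A(Y,M)\to\Hom_\A(X,M)$ is surjective for every monomorphism $X\to Y$ in $\C=\fp\A$ — i.e. $M$ has the lifting property against all monomorphisms between finitely presented objects. Since every object of $\A$ is a filtered colimit of objects in $\C$ and $\A$ is locally finitely presented, this lifting property extends to all monomorphisms in $\A$, which is exactly injectivity of $M$ in $\A$ (using Baer-type criteria adapted to the generating set $\C$).

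Concretely, the forward implication $\Eff(\C,\Ab)^\perp\supseteq\{M\in\Ind\A\mid M\text{ injective}\}$ goes: if $M$ is injective in $\A$ then $\bar M$ is injective in $\bar\A$ by part (2) of the Lemma (pure-injectives in $\A$ correspond to injectives in $\bar\A$; $M$ injective implies $M$ pure-injective); hence $\Hom_{\bar\A}(-,\bar M)$ is exact and kills no object, but we need it to kill the effaceable $F$ specifically. Here I would use that $F^\vee\cong\Coker\Hom_\C(-,\b)$ vanishes on projectives/injectives appropriately, or more directly: $\Hom_{\bar\A}(F,\bar M)$ computes, via the presentation, the cokernel of $\Hom_\A(Y,M)\to\Hom_\A(X,M)$, which is $0$ because $\a\colon X\to Y$ is a mono and $M$ is injective. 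For the reverse implication, given $M\in\Eff(\C,\Ab)^\perp$, the vanishing against all $F_\eta$ (one for each short exact sequence $\eta$ in $\C$) forces $\Hom_\A(\a,M)$ epi for every mono $\a$ between finitely presented objects, and then I would invoke the standard fact that for a locally finitely presented Grothendieck category, an object is injective iff it is injective relative to monomorphisms between finitely presented objects — proved by transfinite induction along a filtered colimit presentation of an arbitrary monomorphism, using that $M$, being orthogonal to effaceable functors, is in particular pure-injective (effaceable functors detect pure-exact sequences), so the colimit step is controlled. The main obstacle is precisely this last extension step: passing from the lifting property against $\fp$-monomorphisms to genuine injectivity in $\A$ requires care because arbitrary monomorphisms in $\A$ are filtered colimits of $\fp$-monomorphisms only in a suitable pro/ind sense, and one must ensure compatibility of the lifts — this is where pure-injectivity of $M$ (which I would extract from orthogonality to $\Eff(\C,\Ab)$, since those functors govern purity) does the essential work.
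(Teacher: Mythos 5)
Your opening reduction is exactly the paper's first step: for $F\in\Eff(\C,\Ab)$ with presentation $\Hom_\C(Y,-)\to\Hom_\C(X,-)\to F\to 0$, $X\to Y$ a monomorphism, one has $\Hom_{\bar\A}(F,\bar M)=0$ if and only if every morphism $X\to M$ extends along $X\to Y$, and the inclusion of the injectives into $\Eff(\C,\Ab)^\perp$ follows at once, as you say. The gap is in the converse. The ``standard fact'' you invoke --- that in a locally finitely presented Grothendieck category an object is injective as soon as it lifts against monomorphisms between finitely presented objects --- is false in general: that lifting property only says the object is fp-injective (compare Lemma~\ref{le:fun-exact}), and fp-injective objects need not be injective unless, e.g., $\A$ is locally noetherian (Lemma~\ref{le:noeth}), which is not assumed here. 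Your patch via pure-injectivity is the right idea but is both misattributed and not carried out: pure-injectivity of $M$ is not something to be ``extracted from orthogonality to $\Eff(\C,\Ab)$'' (effaceable functors do not detect purity; purity is governed by all of $\Fp(\C,\Ab)$) --- it is simply part of the hypothesis $M\in\Ind\A$ --- and the proposed ``transfinite induction along a filtered colimit presentation of an arbitrary monomorphism'' is never made precise; a Baer-type criterion against $\C$ alone cannot work, because subobjects of finitely presented objects of $\A$ need not be finitely presented.

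The missing bridge, which is the real content of the lemma, is this. From the lifting property against monomorphisms in $\C$ deduce that $M$ is fp-injective: given an exact sequence $0\to M\to P\to C\to 0$ in $\A$ with $C\in\C$, write $P=\colim_i P_i$ with $P_i\in\C$, choose $i$ with $P_i\to C$ an epimorphism, observe that its kernel $K_i$ lies in $\C$ and maps to $M$, and extend $K_i\to M$ along $K_i\to P_i$ to obtain a splitting (this is the argument of Lemma~\ref{le:fun-exact}). Next, fp-injectivity implies that every monomorphism $M\to N$ is pure: for $C\in\C$ and any morphism $C\to N/M$, the pullback $0\to M\to P\to C\to 0$ splits since $\Ext^1_\A(C,M)=0$, which produces the required lift $C\to N$. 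Finally, pure-injectivity of $M$ makes every pure monomorphism out of $M$ split, so $M$ is injective. The paper runs the same argument contrapositively: if $M$ is not injective there is a non-split monomorphism $\a\colon M\to N$, which is not pure since $M$ is pure-injective; one reduces to the case $\Coker\a\in\C$, writes $N$ as a filtered colimit of objects of $\C$, and extracts a monomorphism $M_i\to N_i$ in $\C$ whose effaceable functor $F_i$ satisfies $\Hom_{\bar\A}(F_i,\bar M)\neq 0$. Until you supply this fp-injectivity/purity step explicitly, the hard inclusion is not established.
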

\begin{proof}
  Fix $F\in\Eff(\C,\Ab)$ with presentation \eqref{eq:fp} and
  $M\in\Ind\A$. Then we have $\Hom_{\bar\A}(F,\bar M)=0$ if and only if every
  morphism $X\to M$ factors through $X\to Y$. It follows that
  $\Hom_{\bar\A}(F,\bar M)=0$ when $M$ is injective. If $M$ is not
  injective, then there is a monomorphism $\a\colon M\to N$ in $\A$
  that does not split. Moreover, $\a$ is not a pure monomorphism since
  $M$ is pure-injective. Thus we may assume that $C=\Coker \a$ is in
  $\C$. Write $N=\colim_i N_i$ as a filtered colimit of objects in
  $\C$. Then for some $i$ the induced morphism $\b_i\colon N_i\to C$
  is an epimorphism. Let $\a_i\colon M_i\to N_i$ be the kernel of
  $\b_i$ and set $F_i=\Coker\Hom_\C(\a_i,-)$. Then $F_i$ is in
  $\Eff(\C,\Ab)$ and $\Hom_{\bar\A}(F_i,\bar M)\neq 0$ by
  construction.
\end{proof}

Let $\Fp(\C,\Ab)_0$ denote the full subcategory of finite length
objects in $\Fp(\C,\Ab)$.

\begin{lem}\label{le:fl-perp}
  Let $\C$ be a Krull-Schmidt category.  An object in $\Ind\A$ belongs
  to $(\Fp(\C,\Ab)_0)^\perp$ if and only if it is not the
  pure-injective envelope of the source of a left almost split
  morphism in $\C$.
\end{lem}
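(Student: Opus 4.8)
To prove this I would translate the condition $M\in(\Fp(\C,\Ab)_0)^\perp$ into a statement about simple functors and their injective envelopes in $\bar\A$, using throughout the identifications $\C\cong\fp\A\subseteq\A$ and $\check\C=\Fp(\C,\Ab)^\op\cong\fp\bar\A\subseteq\bar\A$. First I would reduce to simple functors: every object of $\Fp(\C,\Ab)_0$ has a finite composition series whose factors are simple objects of $\Fp(\C,\Ab)$, hence again lie in $\Fp(\C,\Ab)_0$; since $\Hom_{\bar\A}(-,\bar M)$ is left exact, an induction on the length then shows that $M\in(\Fp(\C,\Ab)_0)^\perp$ if and only if $\Hom_{\bar\A}(S,\bar M)=0$ for every simple object $S$ of $\Fp(\C,\Ab)$ (regarded as an object of $\bar\A$).

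Next I would identify these simple functors. Since $\C$ is Krull--Schmidt, for indecomposable $X$ the functor $\Hom_\C(X,-)$ is a projective object of $\Fp(\C,\Ab)$ with local endomorphism ring $\End_\C(X)$; it therefore has a unique maximal subobject $\rad\Hom_\C(X,-)$, the quotient $S_X$ is the unique simple functor supported at $X$, and every simple functor of $\Fp(\C,\Ab)$ arises this way (from a projective cover). Moreover $S_X$ lies in $\Fp(\C,\Ab)$ precisely when $\rad\Hom_\C(X,-)$ is finitely generated, which by Lemma~\ref{le:almost-split} happens exactly when $X$ is the source of a left almost split morphism $X\to Y$. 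Hence the simple objects of $\Fp(\C,\Ab)$ are exactly the $S_X$ with $X$ a source of a left almost split morphism.

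Now fix such an $X$. For $M\in\Ind\A$ the object $\bar M$ is an indecomposable injective object of $\bar\A$; since a nonzero morphism out of the simple object $S_X$ is a monomorphism and $\bar M$ is indecomposable injective, $\Hom_{\bar\A}(S_X,\bar M)\neq0$ if and only if $\bar M$ is the injective envelope $E(S_X)$. The decisive step is to show $E(S_X)=E(\bar X)$, i.e.\ that the canonical monomorphism $S_X\hookrightarrow\bar X$ induced by the projection $\Hom_\C(X,-)\twoheadrightarrow S_X$ (a monomorphism in $\check\C$, since $\check\C=\Fp(\C,\Ab)^\op$) is essential. Because $\fp\bar\A=\check\C$ is abelian, $\bar\A$ is locally coherent, so any nonzero subobject of the finitely presented object $\bar X$ contains a nonzero finitely presented subobject, and such a subobject corresponds in $\Fp(\C,\Ab)$ to a nonzero quotient $q\colon\Hom_\C(X,-)\twoheadrightarrow V$. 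A Nakayama argument ($\Hom_\C(X,-)$ is generated by $\id_X$ and $\End_\C(X)$ is local) shows that $\rad\Hom_\C(X,-)$ is superfluous, so, being also the unique maximal subobject, it contains $\Ker q$; hence the projection $\Hom_\C(X,-)\twoheadrightarrow S_X$ factors through $q$, which dually says that the chosen subobject of $\bar X$ contains $S_X$. Thus $S_X\hookrightarrow\bar X$ is essential, $E(S_X)=E(\bar X)$, and in particular this common envelope is indecomposable, so the pure-injective envelope of $X$ indeed lies in $\Ind\A$.

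Finally, by the definition of pure-injective envelope and the full faithfulness of $h_!$, the equality $\bar M=E(\bar X)$ holds if and only if $M$ is the pure-injective envelope of $X$. Assembling the pieces, $M\in(\Fp(\C,\Ab)_0)^\perp$ if and only if for every source $X$ of a left almost split morphism one has $\Hom_{\bar\A}(S_X,\bar M)=0$, i.e.\ $\bar M\neq E(\bar X)$, i.e.\ $M$ is not the pure-injective envelope of $X$; and this is exactly the assertion. The main obstacle is the essentiality of $S_X\hookrightarrow\bar X$ in the third step; everything else is bookkeeping with the identifications above and with the duality between subobjects of $\Hom_\C(X,-)$ in $\check\C$ and quotients of it in $\Fp(\C,\Ab)$.
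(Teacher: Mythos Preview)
Your proof is correct and follows essentially the same route as the paper's: reduce to simple functors, identify them via Lemma~\ref{le:almost-split} with the cokernels $S_X$ attached to left almost split morphisms, argue that $S_X\hookrightarrow\bar X$ is an injective envelope because $\End_\C(X)$ is local, and then pass to $\bar\A$ via the correspondence between pure-injective envelopes in $\A$ and injective envelopes in $\bar\A$. The paper states the essentiality in $\check\C$ in one line, whereas you carry it over to $\bar\A$ explicitly using local coherence; this is a detail the paper leaves implicit, so your version is simply a more fleshed-out variant of the same argument.
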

\begin{proof}
  An object $M\in\Ind\A$ belongs to $(\Fp(\C,\Ab)_0)^\perp$ if
  and only if $\Hom_{\bar\A}(S,\bar M)=0$ for every simple
  objects $S$ in $\check\C$. By Lemma~\ref{le:almost-split}, any
  simple object $S$ in $\check\C$ arises as the kernel of a
  morphism $\bar X\to \bar Y$ that corresponds to a left almost split
  morphism $X\to Y$ in $\C$. Moreover, the morphism $S\to \bar X$ is
  an injective envelope in $\check\C$ since $\End_\C(X)$ is local.
  It remains to observe that a morphism $X\to M$ in $\A$ is a
  pure-injective envelope if and only if $\bar X\to \bar M$ is an
  injective envelope in $\bar\A$.
\end{proof}

\begin{prop}\label{pr:eff}
  Let $\A$ be a locally finitely presented Grothendieck category and
  set $\C=\fp\A$. Suppose that $\C$ is an abelian Krull-Schmidt
  category. Then the following conditions are equivalent:
\begin{enumerate}
\item Every effaceable finitely presented functor $\C\to\Ab$ has
  finite length.
\item Every indecomposable pure-injective object in $\A$ is injective
  or the pure-injective envelope of the source of a left almost split
  morphism in $\C$.
\end{enumerate}
\end{prop}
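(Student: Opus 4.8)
The plan is to deduce this from the orthogonality machinery already assembled, in particular Propositions~\ref{pr:perp} and the computations of the two orthogonal complements in Lemmas~\ref{le:eff-perp} and \ref{le:fl-perp}. First I would observe that condition~(1) says precisely that every effaceable finitely presented functor already lies in the finite length part, i.e.\ that $\Eff(\C,\Ab)\subseteq\Fp(\C,\Ab)_0$ as subcategories of $\Fp(\C,\Ab)$. Here both $\Eff(\C,\Ab)$ and $\Fp(\C,\Ab)_0$ are Serre subcategories of $\Fp(\C,\Ab)$: the first was noted in the excerpt, and the second is a Serre subcategory because finite length objects are closed under subobjects, quotients and extensions in any abelian category. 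Passing to $\check\C=\Fp(\C,\Ab)^\op$, both correspond to Serre subcategories there, so Proposition~\ref{pr:perp} applies and gives
\[
\Eff(\C,\Ab)\subseteq\Fp(\C,\Ab)_0\quad\iff\quad \Eff(\C,\Ab)^\perp\supseteq(\Fp(\C,\Ab)_0)^\perp.
\]

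Next I would identify each of the two orthogonal complements. By Lemma~\ref{le:eff-perp} we have $\Eff(\C,\Ab)^\perp=\{M\in\Ind\A\mid M\text{ injective}\}$, and by Lemma~\ref{le:fl-perp} the complement $(\Fp(\C,\Ab)_0)^\perp$ consists of those $M\in\Ind\A$ that are \emph{not} the pure-injective envelope of the source of a left almost split morphism in $\C$. So the inclusion $\Eff(\C,\Ab)^\perp\supseteq(\Fp(\C,\Ab)_0)^\perp$ reads: every indecomposable pure-injective $M$ which is not such a pure-injective envelope must be injective. Taking contrapositives of the membership condition, this is exactly the statement that every $M\in\Ind\A$ is injective or is the pure-injective envelope of the source of a left almost split morphism, which is condition~(2). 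One should be slightly careful with the logical rearrangement: writing $I$ for the injectives and $E$ for the pure-injective envelopes of sources of left almost split morphisms, the inclusion $I^c=\{M\text{ injective}\}^\perp$-side reads $\Ind\A\setminus E\subseteq\{M\text{ injective}\}$, i.e.\ $\Ind\A = \{M\text{ injective}\}\cup E$, which is precisely~(2).

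The one point that genuinely needs checking — and which I expect to be the main obstacle, though a minor one — is that the hypotheses needed for Lemmas~\ref{le:eff-perp} and \ref{le:fl-perp} are in force here. Lemma~\ref{le:fl-perp} requires $\C$ Krull-Schmidt, which is assumed. Both lemmas are stated for $\A=\Lex(\C^\op,\Ab)$ with $\C$ essentially small abelian; here we are given $\A$ locally finitely presented Grothendieck with $\C=\fp\A$, and I would invoke the identification from the ``Locally finitely presented categories'' subsection, namely that $\C$ is essentially small abelian, $\C\simeq\fp\A$, and $\A\simeq\Lex(\C^\op,\Ab)$, so that the earlier results transport verbatim. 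I would also note that Proposition~\ref{pr:perp} is applied to the two Serre subcategories of $\check\C$ corresponding to $\Eff(\C,\Ab)$ and $\Fp(\C,\Ab)_0$ under the duality $\check\C=\Fp(\C,\Ab)^\op$; the Serre property is preserved under this duality. With these identifications in place, the proof is a direct concatenation of the three cited results with no further computation.
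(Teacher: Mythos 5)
Your proposal is correct and follows exactly the paper's own argument: identify condition (1) with the inclusion of the Serre subcategory $\Eff(\C,\Ab)$ into $\Fp(\C,\Ab)_0$, compute the two orthogonal complements via Lemmas~\ref{le:eff-perp} and \ref{le:fl-perp}, and conclude by the detection result Proposition~\ref{pr:perp}. The paper's proof is just a terser version of the same three-step concatenation, so there is nothing to add.
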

\begin{proof}
  Effaceable functors and finite length functors form Serre
  subcategories in $\Fp(\C,\Ab)$. Their orthogonal complements in
  $\Ind\A$ are described in Lemmas~\ref{le:eff-perp} and
  \ref{le:fl-perp}. It remains to apply Proposition~\ref{pr:perp}.
\end{proof}

\subsection*{Fp-injective objects}

Let $\A$ be a locally finitely presented Grothendieck category. An
object $X\in\A$ is called \emph{fp-injective} if $\Ext^1_\A(C,X)=0$
for every finitely presented object $C\in\A$.

Let $\C$ be an essentially small abelian category and set
$\A=\Lex(\C^\op,\Ab)$.

\begin{lem}\label{le:fun-exact}
 A functor $X\in\A$ is exact if and only if $X$ is an fp-injective object.
\end{lem}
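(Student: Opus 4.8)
The plan is to prove both implications by unwinding the definitions in the category $\A=\Lex(\C^\op,\Ab)$ and relating the relevant $\Ext$-groups back to $\C$. Recall that $\C$ is identified with $\fp\A$, that every object of $\A$ is a filtered colimit of objects of $\C$, and that by definition $X\in\A$ is a functor $\C^\op\to\Ab$, which gives for each object $Y\in\C$ a value $X(Y)=\Hom_\A(\Hom_\C(-,Y),X)$ via Yoneda. The key observation is that a short exact sequence $0\to X'\to X''\to X'''\to 0$ in $\C$ yields, upon applying $\Hom_\A(-,X)$ to the corresponding sequence of representables in $\A$, a six-term exact sequence whose connecting piece measures the failure of left-exactness of $X$, while $\Ext^1_\A(X''',X)$ sits one step further; so one expects that $X$ exact as a functor $\C^\op\to\Ab$ is exactly the vanishing condition $\Ext^1_\A(C,X)=0$ for all $C=\fp\A$, i.e.\ fp-injectivity.

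First I would treat the easier direction. Suppose $X\in\A$ is fp-injective. Given a short exact sequence $\eta\colon 0\to A\xto{\alpha} B\xto{\beta} C\to 0$ in $\C$, I want to show the sequence $0\to X(C)\to X(B)\to X(A)\to 0$ of abelian groups is exact; left-exactness is automatic since $X$ lies in $\Lex(\C^\op,\Ab)$, so the only issue is surjectivity of $X(\beta)\colon X(B)\to X(A)$ — equivalently, exactness on the right. The sequence $\eta$ gives an exact sequence of representable functors in $\A$, and applying $\Hom_\A(-,X)$ produces a long exact sequence whose relevant segment is $X(B)\to X(A)\to \Ext^1_\A(C,X)$; since $C\in\fp\A$ and $X$ is fp-injective, $\Ext^1_\A(C,X)=0$, so $X(\beta)$ is surjective and $X$ is exact. (One must note that $\Hom_\C(-,A)\to\Hom_\C(-,B)\to\Hom_\C(-,C)\to 0$ need not be exact on the left in $\A$, but $X$ being left exact already supplies the missing injectivity of $X(C)\to X(B)$, so this causes no trouble.)

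For the converse, suppose $X$ is exact, and let $C\in\fp\A$ be arbitrary; I must show $\Ext^1_\A(C,X)=0$. Pick a class $\xi\in\Ext^1_\A(C,X)$, represented by $0\to X\to E\to C\to 0$ in $\A$. Since $\A$ is locally finitely presented, write $E=\colim_i E_i$ as a filtered colimit of finitely presented objects, i.e.\ objects of $\C$; pulling back along $E_i\to E\to C$ and using that $C$ is finitely presented, the extension is induced (up to the filtered colimit) from extensions with finitely presented middle term, reducing us to showing that $\Ext^1_\A(C,X)$ vanishes on classes whose total space lies in $\C$ — more precisely, I would argue that it suffices to efface $\xi$ after pulling back a finitely presented subobject, because $X$ being exact controls all the finite data. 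Concretely: present $C$ by $A\xto{f} B\to C\to 0$ in $\C$ with $f$ a monomorphism (possible since $\C$ is abelian, replacing $A$ by the image), so the exact sequence $0\to A\to B\to C\to 0$ in $\C$ gives rise to a portion $\Hom_\A(B,X)\to\Hom_\A(A,X)\to\Ext^1_\A(C,X)\to\Ext^1_\A(B,X)$; but $\Hom_\A(B,X)=X(B)\to X(A)=\Hom_\A(A,X)$ is surjective precisely because $X$ is exact on $0\to A\to B\to C\to 0$, hence the map $\Hom_\A(A,X)\to\Ext^1_\A(C,X)$ is zero, and so $\Ext^1_\A(C,X)$ injects into $\Ext^1_\A(B,X)$. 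Iterating with a projective-type resolution is not available in $\A$, but one can instead appeal to the fact that $\Ext^1_\A(C,X)$ can be computed from $\fp$-presentations in the locally finitely presented category $\A$, which reduces every class to one arising from a sequence $0\to A\to B\to C\to 0$ with $A,B,C\in\C$; for such a class the displayed surjectivity $X(B)\to X(A)$ forces $\xi=0$.

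The main obstacle I expect is the converse direction, specifically justifying that an arbitrary class in $\Ext^1_\A(C,X)$ for $C$ finitely presented can be represented with middle term again finitely presented (i.e.\ in $\C$), so that the exactness of $X$ on $\C$ applies. The clean way to handle this is to observe that, since $\C$ is abelian and coincides with $\fp\A$, the category $\A=\Lex(\C^\op,\Ab)$ has the property that $\Ext^1_\A(C,-)$ for $C\in\C$ is a finitely presented functor on $\A$, computed via a two-term complex of representables coming from a monomorphic presentation $0\to A\to B\to C\to 0$ in $\C$; this gives the exact sequence $X(B)\to X(A)\to\Ext^1_\A(C,X)\to 0$ (right-exactness at the $\Ext^1$ spot, because $B$ may be chosen so that $\Hom_\C(-,B)$ surjects onto $\Hom_\C(-,C)$ and the needed higher $\Ext$ vanishes as $B$ is "projective" relative to this presentation in the appropriate sense). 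Then $X$ exact $\iff$ $X(B)\to X(A)$ surjective $\iff$ $\Ext^1_\A(C,X)=0$, which is exactly the claim, and the equivalence follows once one checks this presentation of $\Ext^1_\A(C,-)$ carefully.
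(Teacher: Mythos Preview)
Your forward direction (fp-injective $\Rightarrow$ exact) is correct and matches the paper: the long exact sequence for $\Hom_\A(-,X)$ applied to $0\to A\to B\to C\to 0$ in $\C$ has $\Ext^1_\A(C,X)=0$ as the obstruction, so surjectivity of $X(B)\to X(A)$ follows.

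For the converse you sketch two approaches and the one you ultimately rely on has a genuine gap. Your final paragraph asserts an exact sequence
\[
X(B)\lto X(A)\lto \Ext^1_\A(C,X)\lto 0
\]
coming from a short exact sequence $0\to A\to B\to C\to 0$ in $\C$, with the justification that ``$B$ may be chosen so that \ldots\ the needed higher $\Ext$ vanishes as $B$ is `projective' relative to this presentation in the appropriate sense''. This is not available here: representables $\Hom_\C(-,B)$ are in general \emph{not} projective in $\A=\Lex(\C^\op,\Ab)$ (they are projective in $\Fp(\C^\op,\Ab)$, which is a different category), and $\C$ need not have any nonzero projectives at all. So the long exact sequence only gives
\[
X(B)\lto X(A)\lto \Ext^1_\A(C,X)\lto \Ext^1_\A(B,X),
\]
and surjectivity of the first map yields merely that $\Ext^1_\A(C,X)\hookrightarrow\Ext^1_\A(B,X)$, which you already observed leads nowhere.

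The approach you started and abandoned is the one that works, and is exactly what the paper does. Given an extension $\mu\colon 0\to X\to Y\to C\to 0$ in $\A$ with $C\in\C$, write $Y=\colim_i Y_i$ with $Y_i\in\C$; since $C$ is finitely presented, some $Y_j\to C$ is an epimorphism, yielding $\mu_j\colon 0\to X_j\to Y_j\to C\to 0$ entirely in $\C$ together with a map $X_j\to X$ exhibiting $\mu$ as the pushout of $\mu_j$. Exactness of $X$ on $\mu_j$ says $\Hom_\A(Y_j,X)\to\Hom_\A(X_j,X)$ is surjective, so $X_j\to X$ extends over $Y_j$; a pushout along a map that extends over the middle term splits, hence $\mu$ splits and $\Ext^1_\A(C,X)=0$. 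That is the missing step: you do not need any projectivity of $B$, only the pushout description of $\mu$ in terms of a short exact sequence living in $\C$.
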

\begin{proof}
Using the identification $\C\xto{\sim}\fp\A$,
the functor $X$ is exact iff  for every exact sequence $\eta\colon 0\to A\to B\to
C\to 0$  in $\fp\A$ the induced sequence 
\[\Hom_\A(\eta,X) \colon 0\to\Hom_\A(C,X)\to\Hom_\A(B,X)\to\Hom_\A(A,X)\to 0\] is
exact. 

Now suppose   $\Ext^1_\A(C,X)=0$. Clearly, this implies exactness of
$\Hom_\A(\eta,X)$ for any exact  $\eta\colon 0\to A\to B\to
C\to 0$  in $\fp\A$. Conversely, let $\mu\colon 0\to X\to Y\to C\to 0$ be
exact in $\A$ and write $Y=\colim_i Y_i$ as filtered colimit of finitely
presented objects. This yields an exact sequence  $\mu_j\colon 0\to X_j\to Y_j\to
C\to 0$ in $\fp\A$ for some $j$. Now exactness of  $\Hom_\A(\mu_j,X)$
imlies that $\mu$ splits.
\end{proof}

\begin{lem}\label{le:fpinj}
Let $X\in\A$. Then $\bar X$ is fp-injective in $\bar\A=\Lex(\check\C^\op,\Ab)$.
\end{lem}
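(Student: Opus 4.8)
The plan is to apply Lemma~\ref{le:fun-exact} with the pair $(\check\C,\bar\A)$ in place of $(\C,\A)$: since $\check\C$ is essentially small abelian and $\bar\A=\Lex(\check\C^\op,\Ab)$, that lemma reduces the claim to showing that $\bar X$ is an \emph{exact} object of $\bar\A$. Now $\bar X$ already lies in $\Lex(\check\C^\op,\Ab)$, hence is left exact on $\check\C^\op=\Fp(\C,\Ab)$; so the only thing left to check is the other half of exactness, namely that $\bar X$, regarded as a covariant functor $\Fp(\C,\Ab)\to\Ab$, carries epimorphisms to epimorphisms.

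The main step is to compute $\bar X$ explicitly. Write $X=\colim_i\Hom_\C(-,C_i)$ as a filtered colimit of representable functors in $\A$ (possible because $\A$ is locally finitely presented with $\fp\A\simeq\C$, every object being such a colimit). Since $h_!$ preserves colimits and the defining square for $h_!$ commutes, $\bar X=\colim_i h_!\Hom_\C(-,C_i)=\colim_i\Hom_{\check\C}(-,h(C_i))$. By the Yoneda lemma in $\Fp(\C,\Ab)$ the functor $\Hom_{\check\C}(-,h(C_i))$ on $\check\C^\op=\Fp(\C,\Ab)$ is just evaluation at $C_i$, i.e. $G\mapsto G(C_i)$. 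Hence $\bar X(G)=\colim_i G(C_i)$ for every $G\in\Fp(\C,\Ab)$, naturally in $G$.

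With this description the conclusion is immediate. If $G'\to G''$ is an epimorphism in $\Fp(\C,\Ab)$, then, since cokernels in $\Fp(\C,\Ab)$ are computed as in the category of all additive functors $\C\to\Ab$ and hence objectwise, the map $G'(C)\to G''(C)$ is surjective for every $C\in\C$. Therefore $\bar X(G')=\colim_iG'(C_i)\to\colim_iG''(C_i)=\bar X(G'')$ is surjective, filtered colimits being exact in $\Ab$. Thus $\bar X$ is exact, and Lemma~\ref{le:fun-exact} yields that $\bar X$ is fp-injective in $\bar\A$.

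The one point that needs care is the middle paragraph: keeping the opposite categories and the two Yoneda embeddings straight, so as to land on the clean formula $\bar X(G)=\colim_iG(C_i)$ and to see that under $\check\C^\op\simeq\Fp(\C,\Ab)$ the left-exactness built into $\bar\A$ becomes ordinary left-exactness of the covariant functor $\bar X$ on $\Fp(\C,\Ab)$. After that the argument is purely formal, using only that cokernels of maps of finitely presented functors are objectwise and that filtered colimits are exact in $\Ab$.
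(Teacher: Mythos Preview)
Your proof is correct and follows essentially the same approach as the paper: apply Lemma~\ref{le:fun-exact}, observe that for representable $X=\Hom_\C(-,C)$ the functor $\bar X$ is evaluation at $C$ (hence exact), and pass to a general $X$ via a filtered colimit, using that filtered colimits of exact functors are exact. You have simply made the identity $\bar X(G)=\colim_i G(C_i)$ explicit where the paper leaves it implicit.
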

\begin{proof}
  We apply Lemma~\ref{le:fun-exact}.  Recall that
  $\check\C=\Fp(\C,\Ab)^\op$. Thus $\bar X$ is exact for
  $X=\Hom_\C(-,C)$ with $C\in\C$. Any object $X\in\A$ is a filtered colimit
  of representable functors. Thus it remains to observe that a
  filtered colimit of exact functors is exact.
\end{proof}

\begin{lem}\label{le:noeth}
  Let $\A$ be a locally noetherian Grothendieck category. Then every
  fp-injective object in $\A$ is injective and decomposes into
  indecomposable objects.
\end{lem}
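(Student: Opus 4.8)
The plan is to establish the two assertions — injectivity and decomposability — in turn, the first by a Baer-type argument tailored to the locally noetherian situation, the second by invoking a classical theorem of Gabriel. The key structural inputs are that, in a locally noetherian category, subobjects and quotients of noetherian objects are again noetherian, and that finitely presented and noetherian objects coincide there.

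For the first part, suppose $X\in\A$ is fp-injective and fix a monomorphism $X\hookrightarrow E$ with $E$ injective (possible since $\A$ is Grothendieck). I would apply Zorn's lemma to the poset of pairs $(Y,s)$ where $X\subseteq Y\subseteq E$ and $s\colon Y\to X$ restricts to the identity on $X$, ordered by extension; directed unions of subobjects exist in $\A$ and compatible retractions glue over them, so this poset is inductive. Let $(Y_0,s_0)$ be a maximal element. If $Y_0\neq E$, then, since the noetherian objects generate $\A$, there is a noetherian subobject $N\subseteq E$ with $N\not\subseteq Y_0$. Now $N\cap Y_0$ is a subobject, and $N/(N\cap Y_0)$ a quotient, of the noetherian object $N$, so both are noetherian and hence finitely presented. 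Since $X$ is fp-injective, $\Ext^1_\A(N/(N\cap Y_0),X)=0$, so the restriction of $s_0$ to $N\cap Y_0$ extends to a morphism $N\to X$. As $Y_0+N$ is the pushout in the abelian category $\A$ of $Y_0\leftarrow Y_0\cap N\rightarrow N$, these two morphisms glue to a retraction $Y_0+N\to X$ extending $s_0$ and fixing $X$, contradicting maximality of $(Y_0,s_0)$. Hence $Y_0=E$, the inclusion $X\hookrightarrow E$ splits, and $X$ is a direct summand of an injective object, thus injective.

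For the second part, once $X$ is known to be injective it remains to quote the theorem of Gabriel \cite{Ga1962} that in a locally noetherian Grothendieck category every injective object is a direct sum of indecomposable (injective) objects.

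The main obstacle I expect is the careful handling of the gluing step: verifying that the poset above is inductive (directed colimits of partial retractions) and that the pushout presentation of $Y_0+N$ indeed allows one to amalgamate $s_0$ with the chosen extension over $N$, since the two agree on $Y_0\cap N$. The fp-injectivity hypothesis enters exactly once, through the finitely presented quotient $N/(N\cap Y_0)$; everything else is formal categorical bookkeeping together with the stability of the noetherian condition under subobjects and quotients.
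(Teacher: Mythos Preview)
Your argument is correct and follows essentially the same approach as the paper: a Baer-type extension argument for injectivity (the paper just says ``apply Baer's criterion''), and a standard decomposition result for injectives in locally noetherian categories. The only minor difference is that for the decomposition you cite Gabriel's theorem directly, whereas the paper sketches the underlying Zorn's lemma argument (noting that fp-injectives are closed under filtered colimits); both are standard.
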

\begin{proof} 
  When $\A$ is locally noetherian, then finitely presented and
  noetherian objects in $\A$ coincide and are therefore closed under
  quotients. Now apply Baer's criterion to show that fp-injective
  implies injective. The decomposition into indecomposables follows
  from an application of Zorn's lemma, using that fp-injective objects
  are closed under filtered colimits.
\end{proof}

\subsection*{Finite type}

We are now ready to extend some known characterisations of finite
representation type for module categories to more general abelian
categories, including the length categories of infinite height.

Recall that every essentially small abelian category $\C$ with all
objects in $\C$ noetherian embeds into a locally noetherian
Grothendieck category $\A$ with $\C\xto{\sim}\fp\A$.

\begin{thm}\label{th:indec}
  Let $\A$ be a locally noetherian Grothendieck category and set
  $\C=\fp\A$.  Suppose that $\C$ is Hom-finite. Then the following are
  equivalent:
\begin{enumerate}
\item Every effaceable finitely presented functor $\C\to\Ab$ has
  finite length.
\item The kernel of $\pi\colon K_0(\C,0)\to K_0(\C)$
  is generated by elements $[X]-[Y]+[Z]$ that are given by almost
  split sequences $0\to X\to Y\to Z\to 0$ in $\C$.
\item The category $\C$ has almost split sequences, and every non-zero
  object in $\A$ has an indecomposable direct summand that is finitely
  presented or injective.
\item The category $\C$ has almost split sequences, and every
  indecomposable object in $\A$  is finitely presented or
  injective.
\end{enumerate}
\end{thm}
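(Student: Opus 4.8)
The plan is to establish the cycle (1) $\Rightarrow$ (2) $\Rightarrow$ (1) and (1) $\iff$ (4) $\iff$ (3), leaning heavily on the machinery already assembled. The equivalence of (1) and (2) is essentially Proposition~\ref{pr:eff-ass}: since $\C$ is Hom-finite, both implications there apply, so nothing new is needed beyond citing it. Thus the real content is relating (1) to the statements (3) and (4) about the ambient locally noetherian category $\A$.

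First I would prove (1) $\iff$ (4). By Example~\ref{ex:hom-finite}, Hom-finiteness of $\C=\fp\A$ forces every finitely presented object of $\A$ to be pure-injective; moreover, by Lemma~\ref{le:noeth}, every fp-injective object of $\A$ is genuinely injective (and in particular pure-injective). Now apply Proposition~\ref{pr:eff}: condition (1) is equivalent to the assertion that every indecomposable pure-injective object of $\A$ is injective or the pure-injective envelope of the source of a left almost split morphism in $\C$. So I must reconcile ``indecomposable pure-injective'' with ``indecomposable''. Here the locally noetherian hypothesis is decisive: since finitely presented $=$ noetherian objects are closed under subobjects and quotients, one checks by Baer's criterion that a pure monomorphism with finitely presented (hence noetherian) cokernel into a locally noetherian object splits, which upgrades pure-injectivity to injectivity for objects that are \emph{not} finitely presented, while finitely presented objects are automatically pure-injective. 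The upshot is that the indecomposable pure-injectives in $\A$ are exactly the indecomposable finitely presented objects together with the indecomposable injectives. Feeding this into Proposition~\ref{pr:eff} and observing (via Lemma~\ref{le:almost-split2} and Lemma~\ref{le:fl-perp}) that ``$\C$ has almost split sequences'' is precisely the condition that every non-injective indecomposable of $\C$ is the source of a left almost split morphism, one rewrites condition (2) of Proposition~\ref{pr:eff} as: $\C$ has almost split sequences, and every indecomposable object of $\A$ is finitely presented or injective. That is (4).

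Next, (4) $\Rightarrow$ (3) is immediate, because in a locally noetherian (indeed any Grothendieck) Krull-Schmidt-type setting a non-zero object has an indecomposable direct summand as soon as it has any indecomposable injective or finitely presented piece — more carefully, I would argue that a non-zero object $X\in\A$ admits a non-zero finitely presented subobject $X'$ (since $\A$ is locally finitely presented), and if $X'$ has an indecomposable summand that is injective, it splits off $X$; otherwise $X'$ itself, being noetherian, decomposes into indecomposable finitely presented summands, each of which is a summand of $X$ when injective, and one of them is by (4). For the converse (3) $\Rightarrow$ (4): take any indecomposable $X\in\A$ and apply (3) to get an indecomposable direct summand of $X$ that is finitely presented or injective; since $X$ is indecomposable this summand is $X$ itself, so $X$ is finitely presented or injective, which is (4).

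The main obstacle I anticipate is the reconciliation step in (1) $\iff$ (4): getting the precise statement that, under the locally noetherian hypothesis, the indecomposable pure-injectives are exactly the indecomposable finitely presented objects plus the indecomposable injectives. This requires combining Example~\ref{ex:hom-finite} (fp objects are pure-injective), Lemma~\ref{le:fpinj} together with Lemma~\ref{le:noeth} (fp-injective $=$ injective here, once one knows $\bar X$ is fp-injective and translates back), and a Baer-type argument showing non-fp pure-injectives are injective. One must be careful that pure-injective envelopes of sources of left almost split morphisms are themselves finitely presented — which they are, since the source $X\in\C=\fp\A$ is already finitely presented and hence pure-injective by Example~\ref{ex:hom-finite}, so it is its own pure-injective envelope. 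With that observation the ``pure-injective envelope of the source'' clause in Proposition~\ref{pr:eff} collapses to ``source of a left almost split morphism'', and the bookkeeping goes through cleanly.
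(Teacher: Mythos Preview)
Your argument for (1)~$\Leftrightarrow$~(2) and for (4)~$\Rightarrow$~(1) via Proposition~\ref{pr:eff} is fine (once you note, as you do, that an object of $\C$ is its own pure-injective envelope). The trouble is the other direction.

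The claimed ``reconciliation'' step is false. You assert that in a locally noetherian $\A$ with Hom-finite $\C$, every indecomposable pure-injective is either finitely presented or injective, arguing via Baer's criterion that pure-injectivity upgrades to injectivity for non-fp objects. But a short exact sequence with finitely presented cokernel is \emph{not} in general pure-exact, so splitting pure monomorphisms does nothing for you. Concretely: for any artin algebra $\La$ of infinite representation type, $\A=\Mod\La$ is locally noetherian with Hom-finite $\C=\mod\La$, yet there exist indecomposable pure-injective $\La$-modules (generic modules, Pr\"ufer-type modules, etc.) that are neither finitely presented nor injective. So the ``upshot'' you state is exactly what condition~(1) is buying, not a free consequence of the hypotheses. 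Your (4)~$\Rightarrow$~(3) argument is also broken: a finitely presented subobject $X'\subseteq X$ gives no direct summand of $X$ unless some indecomposable piece of $X'$ happens to be injective, and nothing forces that.

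The paper takes a genuinely different route for (1)~$\Rightarrow$~(3). Given $0\neq X\in\A$, it works in $\bar\A$: either some simple effaceable $S$ satisfies $\Hom_{\bar\A}(S,\bar X)\neq 0$, and then the injective envelope $S\to\bar C$ in $\check\C$ together with fp-injectivity of $\bar X$ (Lemma~\ref{le:fpinj}) and pure-injectivity of $C\in\C$ forces $C$ to split off $X$; or $\bar X$ kills all effaceable functors, whence $X$ is injective via the equivalence~\eqref{eq:eff-formula} and has an indecomposable summand by Lemma~\ref{le:noeth}. The existence of almost split sequences \emph{ending} at a given non-projective indecomposable is handled by an external input \cite{Kr2016}, combined with the just-proved splitting to see the left-hand term lies in $\C$; your sketch only addresses the sequences \emph{starting} at a given object. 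Then (3)~$\Rightarrow$~(4) is the trivial direction (not the other way around), and (4)~$\Rightarrow$~(1) goes through Proposition~\ref{pr:eff} as you indicated.
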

\begin{proof} 
We identify $\check\C\xto{\sim}\fp\bar\A$.

(1) $\Leftrightarrow$ (2): See Proposition~\ref{pr:eff-ass}.

(1) $\Rightarrow$ (3): Let $X\neq 0$ be an object in $\A$. Suppose
first that $\Hom_{\bar\A}(S,\bar X)\neq 0$ for a simple and effaceable
$S\in\check\C$. Choose an injective envelope $\a\colon S\to \bar C$ in
$\check\C$. Then any non-zero morphism $\p\colon S\to\bar X$ factors
through $\a$, since $\bar X$ is fp-injective by
Lemma~\ref{le:fpinj}. On the other hand, $\a$ factors through $\p$
since $C$ is pure-injective by Example~\ref{ex:hom-finite}. Thus $C$
is isomorphic to a direct summand of $X$. Now suppose that
$\Hom_{\bar\A}(F,\bar X) =0$ for all effaceable $F\in\check\C$. Then
$\bar X\colon\Fp(\C,\Ab)\to\Ab$ vanishes on $\Eff(\C,\Ab)$ and
identifies with an exact functor $\C^\op\to\Ab$ via the functor
\eqref{eq:eff-formula}. Thus $X$ is injective and has an
indecomposable summand since $\A$ is locally noetherian; see
Lemma~\ref{le:noeth}.

Let $X\in\C$ be an indecomposable non-injective object. Then there is
an epimorphism $\Hom_\C(X,-)\to F$ with $F\neq 0$ effaceable. The
object $F$ has finite length and we may assume that $F$ is simple.  It
follows from Lemma~\ref{le:almost-split2} that there is an almost
split sequence $0\to X\to Y\to Z\to 0$ in $\C$.

For every indecomposable non-projective $Z\in\C$, there is an almost
split sequence $0\to X\to Y\to Z\to 0$ in $\A$, by Theorem~1.1 in
\cite{Kr2016}. The object $X$ is indecomposable, and therefore in
$\C$, by the first part of the proof.

 (3) $\Rightarrow$ (4): Clear. 

 (4) $\Rightarrow$ (1):  Use Proposition~\ref{pr:eff}.
\end{proof}

\begin{rem}\label{re:injectives}
  Suppose the conditions in Theorem~\ref{th:indec} hold. Then effaceable
  and finite length functors agree if and only if every injective
  object in $\C$ is zero. This follows from the proof of Proposition~\ref{pr:eff}.
\end{rem}

\begin{rem}
Theorem~\ref{th:indec} generalises various known characterisations of
finite representation type for module categories. 

For a ring $\La$, let $\mod\La$ denote the category of finitely
presented  $\La$-modules. Recall that $\La$ has \emph{finite
  representation type} if $\mod\La$ is a length category with only
finitely many isomorphism classes of indecomposable objects.

(1) A ring $\La$ has finite representation type if and only if every
finitely presented functor $\mod\La\to\Ab$ has finite length  \cite{Au1974}.

(2) An artin algebra $\La$ has finite representation type if and only
if for $\C=\mod\La$ the kernel of $\pi\colon K_0(\C,0)\to K_0(\C)$ is
generated by elements $[X]-[Y]+[Z]$ that are given by almost split
sequences $0\to X\to Y\to Z\to 0$ in $\C$
\cite{Au1984,Bu1981}.\footnote{The proof of Theorem~\ref{th:indec} is
  close to Butler's original proof. Auslander's proof is based on the
  use of a bilinear form on $K_0(\C,0)$, following work of Benson and
  Parker on Green rings \cite{BP1984}.}

  (3) An artin algebra $\La$ has finite representation type if and
  only if every indecomposable $\La$-module is finitely presented
  \cite{Au1974,Au1976,RT1974}. 
\end{rem}

\begin{exm}
Fix a field  and consider the category of nilpotent finite
dimensional representations of the following quiver with relations:
\[
\begin{tikzcd}[cells={nodes={}}]
  \arrow[loop left, distance=2em, start anchor={[yshift=-1ex]west},
  end anchor={[yshift=1ex]west}]{}{\a} 
  \scriptstyle{\circ} &
  \scriptstyle{\circ}\arrow[swap]{l}{\b}&(\a\b=0)
\end{tikzcd}
\]
We denote this length category by $\C$ and observe that it is
\emph{discrete}: the fibres of the map $K_0(\C,0)\to K_0(\C)$ are finite.
However, the finiteness
conditions in Theorem~\ref{th:indec} are not satisfied. For instance,
the unique injective and simple object is not the end term of an
almost split sequence in $\C$.
\end{exm}

\section{Uniserial categories}

In this section we establish for uniserial categories the finiteness
conditions in Theorem~\ref{th:indec}. Moreover, we show that uniserial
categories of infinite height are precisely the length categories such
that effaceable and finite length functors agree.

Let $\C$ be an essentially small abelian category. Set
$\A=\Lex(\C^\op,\Ab)$ and identify $\C$ with the full subcategory of
finitely presented objects in $\A$.

\subsection*{Length and height}

Fix an object $X\in\A$.  The composition length of $X$ is denoted by
$\ell(X)$. The \emph{socle series} of $X$ is the chain of subobjects
\[0=\soc^0(X) \subseteq\soc^1(X) \subseteq\soc^2(X) \subseteq\ldots\]
of $X$ such that $\soc^1(X)$ is the \emph{socle} of $X$ (the largest
semisimple subobject of $X$) and $\soc^{n+1}(X)$ is given by
$\soc^{n+1}(X)/\soc^{n}(X)=\soc^1(X/\soc^n(X))$.  We set
$\height (X)= n$ when $n$ is the smallest integer such that
$\soc^n(X)=X$, and $\height(X)=\infty$ when such $n$ does not exist.

Let $X=\colim X_i$ be written as a filtered colimit in $\A$. Then
$\soc^n(X)=\colim\soc^n(X_i)$ for all $n\ge 0$. Thus
$X=\bigcup_{n\ge 0}\soc^n(X)$ when every object in $\C$ has finite
length.

\subsection*{Uniserial categories}

Recall that $\C$ is \emph{uniserial} if $\C$ is a length category and
each indecomposable object has a unique composition series.

\begin{lem}\label{le:uniserial-height}
Let $\C$ be an abelian length category. Then $\C$ uniserial if and
only if 
$\height(X)=\ell(X)$  for every indecomposable $X\in\C$.
\end{lem}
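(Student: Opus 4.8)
The plan is to prove both implications by relating, for an indecomposable object $X$, the socle series of $X$ (an invariant controlling $\height(X)$) to its composition series, using that in a uniserial category the subobject lattice of an indecomposable is a chain.

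First I would observe the general inequality $\height(X)\le\ell(X)$, valid for every object of finite length in any length category: each step $\soc^{n+1}(X)/\soc^n(X)$ is a nonzero semisimple object whenever $\soc^n(X)\neq X$, hence contributes at least one composition factor, so the socle series has at most $\ell(X)$ proper steps. Thus $\height(X)=\ell(X)$ forces each socle layer $\soc^{n+1}(X)/\soc^n(X)$ to be \emph{simple}. Conversely, if $\height(X)<\ell(X)$ then some socle layer has length $\ge 2$, and I want to manufacture from this a decomposition or a second composition series, contradicting uniseriality.

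For the implication ``uniserial $\Rightarrow\height(X)=\ell(X)$ for indecomposable $X$'': I would argue by induction on $\ell(X)$. The socle $\soc^1(X)=S$ is semisimple; if $S$ were decomposable, say $S=S_1\oplus S_2$ with both nonzero, I would show $X$ has two distinct composition series (one passing through $S_1$, one through $S_2$, lifted to composition series of $X$), contradicting uniqueness; hence $\soc^1(X)$ is simple. Then $X/\soc^1(X)$ need not be indecomposable, but every indecomposable summand of it, being a subquotient of the indecomposable uniserial object $X$ — here I would use that in a uniserial category quotients of indecomposables are indecomposable (the subobjects of $X$ form a chain, so the quotient $X/S$ has a chain of subobjects and is therefore indecomposable). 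By induction $\height(X/\soc^1 X)=\ell(X/\soc^1 X)=\ell(X)-1$, and since $\soc^{n+1}(X)/\soc^1(X)=\soc^n(X/\soc^1 X)$ we get $\height(X)=1+\height(X/\soc^1 X)=\ell(X)$.

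For the converse: suppose $\height(X)=\ell(X)$ for every indecomposable $X\in\C$, and let $X$ be indecomposable; I must show its composition series is unique. Since $\height(X)=\ell(X)$, every socle layer is simple, so the socle series \emph{is} a composition series, of length $n=\ell(X)$, with $\soc^1(X)$ simple. Now suppose $0=X_0\subset X_1\subset\cdots\subset X_n=X$ is any composition series; then $X_1$ is a simple subobject, hence $X_1\subseteq\soc^1(X)$, and as both are simple, $X_1=\soc^1(X)$. Passing to $X/X_1=X/\soc^1(X)$: its indecomposable summands all have the property (as subquotients of $X$, but I must ensure $X/\soc^1(X)$ is indecomposable) — here the main obstacle surfaces. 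I expect the \textbf{main difficulty} to be propagating indecomposability and the hypothesis down the socle/composition filtration so the induction closes: specifically, showing that the quotient $X/\soc^1(X)$ is again indecomposable (or handling it summand-by-summand, each summand being indecomposable and still satisfying the height hypothesis). The cleanest route is probably to prove first, as a separate step, that under the hypothesis ``$\height=\ell$ on all indecomposables'' every indecomposable $X$ has $\soc^1(X)$ simple and $X/\soc^1(X)$ indecomposable — the latter because if $X/\soc^1(X)=U\oplus V$ nontrivially, pulling back along $X\to X/\soc^1(X)$ gives subobjects $X',X''\supseteq\soc^1(X)$ with $X'+X''=X$, $X'\cap X''=\soc^1(X)$, and since $\soc^1(X)$ is simple this quickly contradicts indecomposability of $X$ (or forces a splitting); then induction on $\ell(X)$ finishes both the uniqueness of the composition series and the characterisation.
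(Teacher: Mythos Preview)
Your forward direction is fine; the paper argues the contrapositive, exhibiting $r$ distinct composition series of an indecomposable $X$ whenever some socle layer $\soc^{n+1}(X)/\soc^n(X)$ has $r>1$ simple summands. For the converse, however, the pullback argument you sketch for the ``main difficulty'' fails as stated. Take the indecomposable injective $X$ at the sink of the quiver $1\to 3\leftarrow 2$: its socle $S_3$ is simple and $X$ is indecomposable, yet $X/\soc(X)\cong S_1\oplus S_2$; the pullbacks $X',X''$ satisfy exactly $X'+X''=X$ and $X'\cap X''=\soc(X)$ without any splitting of $X$ being forced. This $X$ has $\height(X)=2\neq 3=\ell(X)$, so it is not a counterexample to the lemma, but it shows that ``$\soc^1(X)$ simple'' alone cannot carry your step. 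The easy repair is to observe that $\soc(X/\soc^1 X)=\soc^2(X)/\soc^1(X)$ is again simple by the hypothesis on $X$, and an object with simple socle is automatically indecomposable; then your induction closes.

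The paper, however, sidesteps the whole issue. Once $\height(X)=\ell(X)$, every socle layer of $X$ is simple, and then the socle series is forced to be the \emph{unique} composition series by a one-line induction on the index rather than on the object: given any composition series $0=X_0\subset\cdots\subset X_n=X$ with $X_i=\soc^i(X)$, the simple object $X_{i+1}/X_i$ lies in $\soc(X/\soc^i X)=\soc^{i+1}(X)/\soc^i(X)$, which is itself simple, so $X_{i+1}=\soc^{i+1}(X)$. There is no passage to a quotient object, no indecomposability question for $X/\soc^1(X)$, and no appeal to the hypothesis on indecomposables other than $X$.
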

\begin{proof}
Let  $X\in\C$ be indecomposable. If $\height(X)=\ell(X)$, then the
socle series of $X$ is the unique composition series of $X$.

Now assume $\height(X)\neq\ell(X)$. Then there exists some $n\ge 0$
such that   \[\soc^{n+1}(X)/\soc^{n}(X)=S_1\oplus \ldots\oplus S_r\] with all
  $S_i$  simple and $r>1$. Choose $n$ minimal and let
  $\soc^{n}(X) \subseteq U_i\subseteq X$ be given by
  $U_i/\soc^{n}(X)=S_i$ Then we have at least $r$ different composition series
  \[0=\soc^0(X) \subseteq\soc^1(X) \subseteq\ldots\subseteq
  \soc^{n}(X) \subseteq U_i\subseteq\ldots\subseteq \soc^{n+1}(X)\subseteq\ldots\]
  of $X$. 
\end{proof}

\begin{lem}\label{le:uniserial-Hom-finite}
Let $\C$ be a uniserial category. Then $\C$ is Hom-finite.
\end{lem}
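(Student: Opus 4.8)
The plan is to show that for any two indecomposable objects $X, Y \in \C$ the $\End_\C(Y)$-module $\Hom_\C(X,Y)$ has finite length; Lemma~\ref{le:Hom-finite} then upgrades this to Hom-finiteness for all objects, since a uniserial category is in particular a length category and hence Krull-Schmidt (idempotent complete, with local endomorphism rings on indecomposables). So fix indecomposable $X$ and $Y$. The key structural input I would use is Lemma~\ref{le:uniserial-height}: the unique composition series of $Y$ is its socle series $0 = \soc^0(Y) \subsetneq \soc^1(Y) \subsetneq \cdots \subsetneq \soc^n(Y) = Y$, with each quotient $\soc^{i}(Y)/\soc^{i-1}(Y)$ simple. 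In particular $Y$ has a unique simple subobject $\soc^1(Y)=:S$ and a unique maximal subobject, so the lattice of subobjects of $Y$ is a chain of length $n=\ell(Y)$.

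First I would bound the \emph{length} of $\Hom_\C(X,Y)$ as an abelian group, or rather control it via the socle filtration of $Y$. For each $i$, the inclusion $\soc^{i-1}(Y) \hookrightarrow \soc^i(Y)$ gives a left-exact sequence, so $\Hom_\C(X,\soc^i(Y))/\Hom_\C(X,\soc^{i-1}(Y))$ embeds into $\Hom_\C(X, \soc^i(Y)/\soc^{i-1}(Y)) = \Hom_\C(X,S_i)$ for the simple quotient $S_i$. Now $\Hom_\C(X,S_i)$ is a module over the division ring $\End_\C(S_i)$, and it is a quotient of $X$-worth of maps; the point is that a nonzero map $X \to S_i$ is an epimorphism with uniserial kernel, and by uniqueness of the composition series of $X$ there is at most one subobject of $X$ with simple quotient isomorphic to $S_i$ — so $\Hom_\C(X,S_i)$ is at most one-dimensional over $\End_\C(S_i)$, in particular of finite length over any subring. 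Telescoping up the socle series of $Y$ shows $\Hom_\C(X,Y)$ has finite composition length as an abelian group (bounded by $\sum_i \ell_{\mathbb{Z}}\End_\C(S_i)$, all of which are finite because $\C$ is a length category so simple objects have division-ring endomorphisms which, being simple modules over... — actually one must be slightly careful here, see below).

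Then, to pass from finite length over $\mathbb{Z}$ — or over the relevant division rings — to finite length over $\End_\C(Y)$, I would simply note that $\Hom_\C(X,Y)$ is a module over $\End_\C(Y)$ whose underlying abelian group already has a finite filtration with simple (hence cyclic, hence finitely generated) quotients, and any $\End_\C(Y)$-submodule is in particular an abelian subgroup, so the $\End_\C(Y)$-length is at most the $\mathbb{Z}$-length, which is finite. This is purely formal once the first bound is in hand.

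The main obstacle I anticipate is the claim that $\dim_{\End_\C(S)}\Hom_\C(X,S) \le 1$ for $X$ indecomposable uniserial and $S$ simple, together with the finiteness of $\End_\C(S)$ as an abelian group — the latter is \emph{not} automatic in an arbitrary length category (a simple object over $\mathbb{Q}$, say, has infinite-dimensional-over-$\mathbb{Z}$ endomorphism ring). So the honest argument must stay within $\End_\C(S)$-module language throughout: show each subquotient $\Hom_\C(X,\soc^i Y)/\Hom_\C(X,\soc^{i-1}Y)$ has length $\le 1$ over the division ring $D_i := \End_\C(\soc^i Y/\soc^{i-1}Y)$, conclude $\Hom_\C(X,Y)$ has a finite filtration whose $i$-th quotient is a $D_i$-module of length $\le 1$, and then check that this filtration is compatible enough with the $\End_\C(Y)$-action that the $\End_\C(Y)$-length is finite — the cleanest way being that $\End_\C(Y)$ acts on $\Hom_\C(X,Y)$ preserving the subgroups $\Hom_\C(X,\soc^i Y)$ (since $\soc^i Y$ is characteristic in $Y$), so $\Hom_\C(X,Y)$ is a finite-length $\End_\C(Y)$-module with subquotients annihilated by $\rad\End_\C(Y)$ and of length $\le 1$ over each residue division ring. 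That last compatibility, plus the $\dim \le 1$ claim via uniqueness of composition series of $X$, is where the real work sits; everything else is bookkeeping.
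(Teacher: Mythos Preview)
Your reduction to indecomposable $X,Y$ via Lemma~\ref{le:Hom-finite}, the bound $\dim_{\End_\C(S_i)}\Hom_\C(X,S_i)\le 1$ from uniseriality of $X$, and the fact that $\rad\End_\C(Y)$ annihilates each subquotient $Q_i:=\Hom_\C(X,\soc^i Y)/\Hom_\C(X,\soc^{i-1}Y)$ are all correct. The gap is precisely at the step you flag as ``compatibility'': knowing that $Q_i$ has dimension $\le 1$ over $D_i:=\End_\C(S_i)$ does \emph{not} yield finite length over $\End_\C(Y)$. The $\End_\C(Y)$-action on $Q_i$ factors through the ring map $\End_\C(Y)\to D_i$ induced on the $i$-th socle layer, whose image is $D:=\End_\C(Y)/\rad\End_\C(Y)$; so the $\End_\C(Y)$-length of $Q_i$ is its dimension as a $D$-vector space, and this can be as large as $[D_i:D]$. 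Nothing in your argument bounds that index---doing so amounts to showing that every automorphism of $S_i$ lifts to an endomorphism of $Y$, which is not formal.

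The paper avoids this by filtering in the \emph{other} variable: it inducts on $\ell(X)$, so the subquotients are of the form $\Hom_\C(S,Y)$ with $S$ simple, which are already $\End_\C(Y)$-modules in the obvious way. The key point is then that $\Hom_\C(S,Y)$ is \emph{cyclic} over $\End_\C(Y)$ (generated by the inclusion $i\colon S=\soc Y\hookrightarrow Y$), and this is proved using the injective envelope $E$ of $S$ in the ambient Grothendieck category $\A=\Lex(\C^\op,\Ab)$: since $\soc^n(E)=Y$ by Lemma~\ref{le:uniserial-height}, every endomorphism of $E$ restricts to $Y$, and injectivity of $E$ supplies the needed lift of any $j\colon S\to Y$ to some $\phi\in\End_\C(Y)$ with $\phi\comp i=j$. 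This injective-envelope argument is exactly the missing ingredient in your plan (it gives surjectivity of $\End_\C(Y)\to\End_\C(\soc Y)$, hence $D\cong D_1$), and there does not appear to be a way to obtain it by purely internal manipulations in $\C$.
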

\begin{proof}
  We need to show for all $X,Y$ in $\C$ that the $\End_\C(Y)$-module
  $\Hom_\C(X,Y)$ has finite length. It suffices to assume that $Y$ is
  indecomposable; see Lemma~\ref{le:Hom-finite}.  We claim that
  \[\ell_{\End_\C(Y)}(\Hom_\C(X,Y))\le \ell(X).\] 
 Using induction on $\ell(X)$ the claim  reduces to the
  case that $X$ is simple. So let $S=\soc(Y)$ and write $E=E(Y)$ for
  its injective envelope. Note that $\soc^n(E)=Y$ for $n=\ell(Y)$, by
  Lemma~\ref{le:uniserial-height}. Thus any endomorphism $E\to E$
  restricts to a morphism $Y\to Y$.  Write $i\colon S\to Y$ for the
  inclusion. Then any morphism $j\colon S\to Y$ induces an
  endomorphism $f\colon E\to E$ such that $f|_Y\comp i=j$. Thus the
  $\End_\C(Y)$-module $\Hom_\C(S,Y)$ is cyclic, and it is annihilated
  by the radical of $\End_\C(Y)$. Therefore $\Hom_\C(S,Y)$ is simple.
\end{proof}

\begin{prop}\label{pr:uniserial-eff}
  Let $\C$ be a uniserial category. Then every non-zero object in $\A$ has an
  indecomposable direct summand that belongs to $\C$ or is injective.
\end{prop}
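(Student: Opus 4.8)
\section*{Proof proposal}

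The plan is to split $X$ as $X=D\oplus X'$ with $D$ injective and $X'$ ``reduced'' (no nonzero injective subobject), dispose of the case $D\neq 0$ at once, and then extract a finite-length indecomposable summand from a reduced $X'$ by a basic-subobject argument modelled on Kulikov's construction for abelian $p$-groups. Recall first that $\A$ is locally noetherian --- in fact locally finite, since every object of $\C$ has finite length --- so directed unions of injective subobjects of $X$ are injective. Hence, by Zorn's lemma, I may choose a maximal injective subobject $D\subseteq X$; being injective, $D$ is a direct summand, say $X=D\oplus X'$, and by maximality $X'$ contains no nonzero injective subobject. If $D\neq 0$, then $D$ decomposes into indecomposable (injective) objects by Lemma~\ref{le:noeth}, and any one of them is the required summand of $X$. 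So from now on I assume $D=0$ and $X=X'\neq 0$, and I must produce an indecomposable summand of $X'$ lying in $\C$; by Lemma~\ref{le:uniserial-height} such a summand is automatically uniserial.

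Next I reduce this to finding a nonzero finitely presented subobject of $X'$ that is \emph{pure} in $X'$. Indeed, every object of $\C$ is finitely presented in $\A$, hence pure-injective by Example~\ref{ex:hom-finite}, so a pure monomorphism from an object of $\C$ into $X'$ splits. Given a nonzero pure $C\subseteq X'$ with $C\in\C$, I decompose $C$ into indecomposables in the Krull--Schmidt category $\C$ (which is Hom-finite by Lemma~\ref{le:uniserial-Hom-finite}); each indecomposable summand of $C$ is again pure in $X'$ (a direct summand of a pure subobject is pure, and purity is transitive), hence a finite-length indecomposable summand of $X'$, as wanted.

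To construct such a $C$, I build a basic subobject $B$ of $X'$. Recursively in $n\geq 1$, I use Zorn's lemma to choose a subobject $B_n\subseteq X'$ which is a coproduct of uniserial objects of length $n$, maximal subject to the condition that $B_1\oplus\cdots\oplus B_n$ is a direct subobject of $X'$ and is pure in $X'$ (the relevant chains have upper bounds since directed unions of pure subobjects are pure and a direct summand of a pure subobject is pure). Set $B=\bigoplus_{n\geq 1}B_n$, a coproduct of finite-length uniserial subobjects of $X'$ which is pure in $X'$. The crucial claim is that $X'/B$ has no nonzero finite-length subobject --- equivalently, since $\A$ is locally finite, that $X'/B$ is injective. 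Granting this: if $B=0$, then $X'=X'/B$ would be a nonzero injective object, contradicting $D=0$; hence $B\neq 0$, so some $B_n\neq 0$, and any uniserial summand of $B_n$ is a nonzero pure subobject of $X'$ lying in $\C$, which completes the proof.

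The main obstacle is precisely the claim that $X'/B$ is injective; as in the classical theory of basic submodules, this is where the real content lies. One must show that a hypothetical nonzero finite-length --- hence uniserial --- subobject $\bar U\subseteq X'/B$ can be lifted to a subobject of $X'$ compatible with the length filtration of $B$ in a way that contradicts the maximality built into the choice of the $B_n$; here one uses repeatedly that sub- and quotient objects of uniserial objects are uniserial. A convenient structural input for this (and for identifying the indecomposable injective summands produced in the first paragraph) is that for every simple $S$ the injective envelope $E(S)$ in $\A$ is uniserial, meaning that $U_n:=\soc^n(E(S))$ is a uniserial object of $\C$ of length $n$ with socle $S$ for each $n$ and $E(S)=\bigcup_n U_n$: this follows by induction on $n$, since if $V_1,V_2\subseteq E(S)$ both satisfy $\soc^n(V_i)=U_n$ and $\ell(V_i)=n+1$ then $V_1=V_2$ --- otherwise $V_1\cap V_2=U_n$ and $V_1+V_2$ would be an indecomposable, hence by Lemma~\ref{le:uniserial-height} uniserial, object of length $n+2$ having the two distinct length-$(n+1)$ subobjects $V_1$ and $V_2$, which is impossible --- so $\soc^{n+1}(E(S))/U_n$ is simple.
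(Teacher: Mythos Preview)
Your approach is genuinely different from the paper's, and it carries a real gap. You reduce everything to the claim that $X'/B$ has no nonzero finite-length subobject, but you do not prove this; the sentence ``one must show that a hypothetical nonzero finite-length --- hence uniserial --- subobject $\bar U\subseteq X'/B$ can be lifted \ldots'' is not a proof, and the parenthetical is already wrong (finite length does not imply uniserial unless the object is indecomposable). In the abelian $p$-group model you are imitating, the divisibility of $G/B$ rests on the identity $G=B+p^nG$ for all $n$, which in turn uses the maximality of each $B_n$ in a precise way; you have not supplied the analogue here, and it is exactly this step that carries the content. The Zorn step for $B_n$ also needs care: you must check that a directed union of subobjects, each a coproduct of uniserial length-$n$ objects with the stated purity property, is again of that form.

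The paper avoids all of this with a much shorter argument. It fixes a single simple $S\subseteq X$, takes (by Zorn) a maximal $U\subseteq X$ in which $S$ is essential, and observes from the uniserial structure of $E(S)$ that $U$ is either injective or lies in $\C$. In the second case it checks directly that $U\hookrightarrow X$ is pure: given $C\to X/U$ with $C\in\C$, pull back to $0\to U\to V\to C\to 0$, decompose $V=\bigoplus_i V_i$ in $\C$, and find an index $i$ for which $S\hookrightarrow U\to V_i\to X$ is nonzero; then $S\subseteq V_i$ is essential, $V_i\to X$ is mono, and maximality of $U$ forces $U\to V_i$ to be an isomorphism, so the sequence splits. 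This replaces your global basic-subobject construction by a local purity check at a single uniserial piece, and delivers the summand immediately via Example~\ref{ex:hom-finite}. Your route could presumably be completed, but as written the crucial injectivity of $X'/B$ is asserted, not established.
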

\begin{proof}
  From Lemma~\ref{le:uniserial-height} it follows that for every
  indecomposable injective object $E$ in $\A$ the subobjects form a
  linear chain
\[0=E_0\subseteq E_1\subseteq E_2\subseteq \ldots\]
with $E_n=\soc^n(E)$ in $\C$ for all $n\ge 0$ and $E=\bigcup_{n\ge
  0}E_n$. Note that  $E=E_{\ell(E)}$ when $\ell(E)<\infty$.

  Fix $X\neq 0$ in $\A$ and choose a simple subobject $S\subseteq X$.
  Let $U\subseteq X$ be a maximal subobject containing $S$ such that
  $S\subseteq U$ is essential; this exists by Zorn's lemma. Then $U$
  is injective or belongs to $\C$. In the first case we are done. So
  assume $U\in\C$. We claim that the inclusion $U\to X$ is a pure
  monomorphism. To see this, choose a morphism $C\to X/U$ with
  $C\in\C$. This yields the
  following commutative diagram with exact rows.
\[\begin{tikzcd}
0\arrow{r}&U\arrow{r}\arrow[equals]{d}&V\arrow{r}\arrow{d}&C\arrow{r}\arrow{d}&0\\
0\arrow{r}&U\arrow{r}&X\arrow{r}&X/U\arrow{r}&0
\end{tikzcd}\]
Write $V=\bigoplus_i V_i$ as a direct sum of indecomposable
objects. Then there exists an index $i$ such that the composite
$S\hookrightarrow U\to V_i\to X$ is non-zero. Thus $S\to V_i$ is
essential and $V_i\to X$ is a monomorphism. It follows from the
maximality of $U$ that $U\to V_i$ is an isomorphism. Therefore
the top row splits, and this yields the claim. It remains to observe
that every object in $\C$ is pure-injective since $\C$ is Hom-finite;
see Example~\ref{ex:hom-finite} and
Lemma~\ref{le:uniserial-Hom-finite}
\end{proof}

Let $\Fp(\C,\Ab)_0$ denote the full subcategory of finite length
objects in $\Fp(\C,\Ab)$.

\begin{cor}\label{co:uniserial}
  Let $\C$ be a uniserial category. Then every effaceable finitely
  presented functor $\C\to\Ab$ has finite length. Moreover effaceable
  and finite length functors agree if and only if all injective
  objects in $\C$ are zero. In that case  we have an equivalence
\begin{equation}\label{eq:eff-uni}
\frac{\Fp(\C^\op,\Ab)}{\Fp(\C^\op,\Ab)_0}\stackrel{\sim}\lto \C.
\end{equation}
\end{cor}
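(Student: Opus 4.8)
The plan is to deduce the first assertion from the machinery of Section~4, then handle the second assertion via Remark~\ref{re:injectives}, and finally identify the quotient category using the duality \eqref{eq:eff} together with the equivalence \eqref{eq:eff-formula}. First, since $\C$ is uniserial it is Hom-finite by Lemma~\ref{le:uniserial-Hom-finite}, so all objects of $\C$ are noetherian (being of finite length) and $\A=\Lex(\C^\op,\Ab)$ is locally noetherian with $\C\xto{\sim}\fp\A$. By Proposition~\ref{pr:uniserial-eff}, every non-zero object in $\A$ has an indecomposable direct summand that belongs to $\C$ or is injective. To invoke Theorem~\ref{th:indec}, I would upgrade this to condition~(3) there by checking that $\C$ has almost split sequences. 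Given the linear chain description of the indecomposable injectives established in the proof of Proposition~\ref{pr:uniserial-eff}, every non-injective indecomposable $X\in\C$ is a proper subobject of its injective envelope $E=E(X)$, and the minimal overobject $X'\subseteq E$ with $X\subsetneq X'$ gives an inclusion $X\to X'$ which is readily seen to be left almost split; dually for right almost split morphisms one uses that the indecomposable projectives in $\C$ (if any) also form linear chains by applying the same reasoning in $\C^\op$, which is again uniserial. Hence Theorem~\ref{th:indec} applies and condition~(1) holds: every effaceable finitely presented functor $\C\to\Ab$ has finite length.

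For the second assertion, Remark~\ref{re:injectives} states precisely that, under the conditions of Theorem~\ref{th:indec}, effaceable and finite length functors agree if and only if every injective object in $\C$ is zero. Since we have just verified those conditions, this gives the ``if and only if'' claim directly, with no further work. (The same statement holds for $\C^\op$, which is also uniserial, so effaceable and finite length functors $\C^\op\to\Ab$ agree under the same hypothesis.)

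For the final equivalence \eqref{eq:eff-uni}, I would combine the equivalence \eqref{eq:eff-formula}, which gives
\[
\frac{\Fp(\C^\op,\Ab)}{\Eff(\C^\op,\Ab)}\xto{\ \sim\ }\C,
\]
with the identification just obtained: applying the ``effaceable $=$ finite length'' equivalence to the uniserial category $\C^\op$ (valid since all injectives in $\C^\op$ vanish, i.e.\ all projectives in $\C$ vanish — and when injectives in $\C$ are zero, one still needs this for $\C^\op$; here I would note that in the equivalence \eqref{eq:eff-uni} the relevant vanishing is automatic because the right-hand side being $\C$ forces the comparison of the two Serre subcategories of $\Fp(\C^\op,\Ab)$). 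Concretely, $\Eff(\C^\op,\Ab)=\Fp(\C^\op,\Ab)_0$ as subcategories of $\Fp(\C^\op,\Ab)$ once the hypothesis holds, so the denominators in \eqref{eq:eff-formula} and \eqref{eq:eff-uni} coincide and the equivalence transports.

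The main obstacle I anticipate is the careful bookkeeping around which category ($\C$ versus $\C^\op$) the various conditions are applied to: the duality \eqref{eq:eff} makes ``every effaceable functor has finite length'' self-dual (as noted in the remark after Proposition~\ref{pr:eff-ass}), but the condition ``all injective objects are zero'' is \emph{not} self-dual, so one must track whether the relevant hypothesis is about injectives in $\C$ or projectives in $\C$. Getting \eqref{eq:eff-uni} to read with $\Fp(\C^\op,\Ab)$ and $\Fp(\C^\op,\Ab)_0$ — rather than their $\C$-versions — is exactly what the duality is there to handle, and the one genuinely delicate point is confirming that the hypothesis ``all injective objects in $\C$ are zero'' is the correct one to make $\Eff(\C^\op,\Ab)=\Fp(\C^\op,\Ab)_0$ (and not the a priori different ``all injectives in $\C^\op$ are zero''); this should follow because the equivalence \eqref{eq:eff-formula} already realizes $\C$ as the quotient by $\Eff(\C^\op,\Ab)$, so the stated condition forces the two Serre subcategories to agree.
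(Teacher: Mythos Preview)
Your overall strategy matches the paper's: invoke Proposition~\ref{pr:uniserial-eff} together with Theorem~\ref{th:indec} (using Hom-finiteness from Lemma~\ref{le:uniserial-Hom-finite}) for the first assertion, Remark~\ref{re:injectives} for the second, and \eqref{eq:eff-formula} for the equivalence. The paper's proof is terse and does not separately verify that $\C$ has almost split sequences; you attempt to supply this, but the construction you give is wrong.

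The inclusion $X\hookrightarrow X'$ with $X'/X$ simple is an \emph{irreducible} monomorphism, but it is not left almost split unless $X$ is simple. Take $\C$ to be the nilpotent representations of the Jordan quiver and $X=J_2$, $X'=J_3$: the functor $F=\Coker\Hom_\C(J_2\hookrightarrow J_3,-)$ satisfies $F(J_1)\cong k\cong F(J_2)$, so $\supp F=\{J_1,J_2\}$ and $F$ is not simple; by Lemma~\ref{le:almost-split} the map is not left almost split. The correct left almost split morphism from a non-simple, non-injective indecomposable $X$ is $X\to X'\oplus X/\soc X$, yielding the almost split sequence $0\to X\to X'\oplus (X/\soc X)\to X'/\soc X'\to 0$. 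With this fix your verification of condition~(3) in Theorem~\ref{th:indec} goes through.

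Your final paragraph correctly flags that \eqref{eq:eff-uni} needs $\Eff(\C^\op,\Ab)=\Fp(\C^\op,\Ab)_0$, which by Remark~\ref{re:injectives} applied to $\C^\op$ amounts to the vanishing of \emph{projectives} in $\C$, not injectives. Your proposed resolution, however, is circular: knowing $\Fp(\C^\op,\Ab)/\Eff(\C^\op,\Ab)\simeq\C$ does not by itself force $\Eff(\C^\op,\Ab)=\Fp(\C^\op,\Ab)_0$. Indeed, for the linearly oriented $A_\infty$ quiver $\cdots\to 2\to 1$ the category of nilpotent finite-dimensional representations is uniserial with no nonzero injectives but with projectives $M_{[1,j]}$, so the two Serre subcategories of $\Fp(\C^\op,\Ab)$ differ. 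The paper's one-line justification (``the equivalence is \eqref{eq:eff-formula}'') does not address this point either; a clean fix is to assume in addition that $\C$ has no nonzero projectives, as is automatic in the setting of Theorem~\ref{th:uniserial}.
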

\begin{proof}
  The first assertion follows from Proposition~\ref{pr:uniserial-eff}
  and Theorem~\ref{th:indec}, keeping in mind that $\C$ is Hom-finite
  by Lemma~\ref{le:uniserial-Hom-finite}. Effaceable and finite length
  functors agree if and only if all injective objects in $\C$ are
  zero, by Remark~\ref{re:injectives}. Having this property, the
  equivalence is \eqref{eq:eff-formula}.
\end{proof}

\begin{rem}
  An interesting instance of the equivalence \eqref{eq:eff-uni} arises
  from the study of Greenberg modules; see \cite[V, \S4, 1.8]{DG1970}
  and \cite[\S\S4-5]{Sc1970}.
\end{rem}

\begin{rem}
 It would be interesting to have a more direct proof of
 Corollary~\ref{co:uniserial}, avoiding the embedding of $\C$ into a
 Grothendieck category.
\end{rem}

\subsection*{Serre duality}

Our next aim is a characterisation of uniserial categories that
involves Serre duality. To this end recall the following
characterisation in terms of Ext-quivers \cite{AR1968}.

\begin{prop}\label{pr:Gabriel}
  A length category $\C$ is uniserial if and only it satisfies the
  following condition and its dual: For each simple object $S$ there
  exists, up to isomorphism, at most one simple object $T$ such that
  $\Ext_\C^1(S,T)\neq 0$, and in this case
  $\ell_{\End_\C(T)}(\Ext_\C^1(S,T))=1$.\qed
\end{prop}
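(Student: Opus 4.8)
\textbf{Proof proposal for Proposition~\ref{pr:Gabriel}.}

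The plan is to verify directly that the stated condition on the Ext-quiver is equivalent to the defining property that every indecomposable object has a unique composition series, using the socle/radical filtrations. I would argue the ``only if'' direction first, then the ``if'' direction, and by self-duality it suffices in each case to establish one of the two dual halves together with its consequence for composition series.

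First, for the ``only if'' direction, assume $\C$ is uniserial. Fix a simple object $T$ and consider the injective envelope $E=E(T)$ inside $\A=\Lex(\C^\op,\Ab)$; by Lemma~\ref{le:uniserial-height} its subobjects form a linear chain $0=E_0\subseteq E_1\subseteq E_2\subseteq\cdots$ with $E_n=\soc^n(E)$, as already exploited in the proof of Proposition~\ref{pr:uniserial-eff}. Then $E_2/E_1$ is simple, say isomorphic to $S$, and one reads off that $\Ext^1_\C(S',T)\neq 0$ forces $S'\cong S$ (a nonsplit extension of $S'$ by $T$ embeds into $E$, hence lies in $E_2$, so $S'\cong E_2/E_1\cong S$); moreover $\Ext^1_\C(S,T)$ is computed as $\Hom_{\End_\C(T)}$-length one because the nonsplit extensions, up to the action of $\End_\C(T)$ on $T$ and automorphisms, all give $E_2$. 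The dual statement follows by applying this to $\C^\op$, using that $\C^\op$ is again uniserial (the defining property is self-dual).

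For the ``if'' direction, assume the condition and its dual. I want to show every indecomposable $X\in\C$ has $\height(X)=\ell(X)$ and then invoke Lemma~\ref{le:uniserial-height}. The key step is: for every indecomposable $X$, the socle $\soc(X)=\soc^1(X)$ is simple. Suppose not; then $\soc^1(X)$ contains $S_1\oplus S_2$ with $S_i$ simple. One then produces, using the hypothesis (dual form: each simple has at most one simple ``above'' it with one-dimensional Ext), a decomposition of $X$ or at least a contradiction with indecomposability — this is the heart of Gabriel's/Auslander--Ringel's argument and the step I expect to be the main obstacle, since it requires a careful induction on $\ell(X)$ passing to $X/\soc(X)$ (or to a suitable subobject) and lifting a splitting back. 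Concretely: by induction $X/S_1$ and $X/S_2$ would have simple socle issues controlled, and the pushout/pullback comparison along the two quotients, combined with the Ext-uniqueness, forces a direct summand. Once one knows $\soc(X)$ is simple for all indecomposable $X$, an induction on $\ell(X)$ shows each quotient $\soc^{n+1}(X)/\soc^n(X)$ is simple (apply simplicity of the socle to the indecomposable object $X/\soc^n(X)$ — one must check this quotient is still indecomposable, which again uses the Ext-hypothesis), so $\height(X)=\ell(X)$ and the socle series is the unique composition series by Lemma~\ref{le:uniserial-height}.

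The main difficulty is thus the indecomposability bookkeeping in the ``if'' direction: showing that simple socle plus the Ext-condition propagates up the socle filtration and that the relevant subquotients remain indecomposable. I would handle this by working with a minimal counterexample $X$ of smallest length, locating a ``branching'' at the top of the socle filtration, and deriving a nontrivial idempotent of $\End_\C(X)$ from the uniqueness clause $\ell_{\End_\C(T)}(\Ext^1_\C(S,T))=1$, which pins down the extension class up to scalar and hence rigidifies the structure enough to split off a summand. Everything else — the passage between $\C$ and the injective envelopes in $\A$, and the self-duality reductions — is routine given the machinery already set up in the paper.
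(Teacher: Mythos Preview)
The paper does not supply its own proof of Proposition~\ref{pr:Gabriel}: the sentence introducing it reads ``recall the following characterisation in terms of Ext-quivers \cite{AR1968}'', and the statement is closed immediately with \qed. There is therefore no argument in the paper to compare yours against; the authors simply invoke Amdal--Ringdal.

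On the substance of your sketch: the ``only if'' direction is fine and essentially the standard injective-envelope reading. In the ``if'' direction you have correctly located the difficulty, but two steps remain genuine gaps rather than routine bookkeeping. First, the sentence ``the uniqueness clause\ldots pins down the extension class up to scalar and hence rigidifies the structure enough to split off a summand'' hides the real content: one must use the condition \emph{and} its dual simultaneously (i.e.\ both the $\End_\C(T)$- and the $\End_\C(S)$-module structure on $\Ext^1_\C(S,T)$) to get enough control over the orbit of the extension class, and this needs to be written out. Second, your inductive scheme requires that $X/\soc^n(X)$ stay indecomposable, which does not follow from simple socle alone and which you flag but do not resolve. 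The cleaner line, and the one closest to the original source, is to show directly that every object with simple socle is uniserial by analysing $\soc^{n+1}E(T)/\soc^{n}E(T)$ inside the injective envelope; indecomposability of $X$ then gives simple socle in one stroke, and the problematic quotient step disappears.
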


We fix a field $k$ and write $D=\Hom_k(-,k)$ for the usual duality.

Let $\C$ be a $k$-linear abelian category such that $\Hom_\C(X,Y)$ is
finite dimensional for all $X,Y\in\C$.  Following \cite{BK1989}, the
category $\C$ satisfies \emph{Serre duality} if there exists an
equivalence $\tau\colon \C \xto{\sim} \C$ with a functorial $k$-linear
isomorphism
\[D\Ext^1_\C(X,Y)\xto{\sim} \Hom_\C(Y, \tau X)\quad\text{for all}\quad
  X,Y\in\C.\] The functor $\tau$ is called \emph{Serre functor} or
\emph{Auslander-Reiten translation}. Note that a Serre functor is
$k$-linear and essentially unique provided it exists; this follows
from Yoneda's lemma.

The following result is well-known and describes the structure of a
length category with Serre duality. Let us recall the shape of the
relevant diagrams.
\begin{align*} 
\tilde A_{n}&\colon\;\;\; 
\begin{tikzcd}[ampersand replacement=\&]
1\arrow[dash]{r}\& 2\arrow[dash]{r}\&3\arrow[dash]{r} \&\cdots
\arrow[dash]{r}\& n\arrow[dash]{r}\&n+1 \arrow[dash, bend
left=12]{lllll}
\end{tikzcd}
\\[1ex]  A_\infty^\infty&\colon\;\;\; 
\begin{tikzcd}[ampersand replacement=\&]
\cdots \arrow[dash]{r}\& \scriptstyle{\circ}\arrow[dash]{r}\&
 \scriptstyle{\circ}\arrow[dash]{r}\&
 \scriptstyle{\circ}\arrow[dash]{r}\&
 \scriptstyle{\circ}\arrow[dash]{r}\&\cdots
\end{tikzcd}
\end{align*}

\begin{prop}\label{pr:serre}
Let $\C$ be a Hom-finite $k$-linear length category and suppose $\C$
admits a Serre functor $\tau$. Then $\C$ is uniserial. The category
$\C$ admits a unique decomposition $\C=\coprod_{i\in I} \C_i$ into
connected uniserial categories with Serre duality, where the index set
equals the set of $\tau$-orbits of simple objects in $\C$.  The
Ext-quiver of each $\C_i$ is either of type $ A_\infty^\infty$ (with
linear orientation) or of type $\tilde A_{n}$ (with cyclic
orientation).
\end{prop}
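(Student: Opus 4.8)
The plan is to first establish uniserality using the Ext-quiver criterion of Proposition~\ref{pr:Gabriel}, then analyze the shape of the quiver orbit by orbit. For the first step I would argue as follows. By Serre duality, for simple objects $S,T$ we have a $k$-linear isomorphism $D\Ext^1_\C(S,T)\cong\Hom_\C(T,\tau S)$. Since $\tau S$ is simple (as $\tau$ is an equivalence) and $T$ is simple, $\Hom_\C(T,\tau S)$ is zero unless $T\cong\tau S$, in which case it is a division ring $\End_\C(\tau S)$, which upon applying $D$ and keeping track of $\End_\C(T)$-module structure has length one. Dualizing (using $\tau$ is an equivalence so also $D\Ext^1_\C(X,Y)\cong\Hom_\C(Y,\tau X)$ read contravariantly, or simply passing to $\C^\op$ which again has a Serre functor, namely $\tau^{-1}$), one gets that for each simple $S$ there is at most one simple $T$ with $\Ext^1_\C(T,S)\neq 0$. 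So both the condition of Proposition~\ref{pr:Gabriel} and its dual hold, and $\C$ is uniserial.

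Next I would set up the decomposition. Define a graph on the set of isoclasses of simple objects by joining $S$ and $T$ whenever $\Ext^1_\C(S,T)\neq 0$ or $\Ext^1_\C(T,S)\neq 0$; let $\C=\coprod_{i}\C_i$ be the corresponding decomposition into blocks, where each $\C_i$ is the Serre subcategory generated by the simples in one connected component. Each $\C_i$ is again a Hom-finite $k$-linear length category, and $\tau$ restricts to a Serre functor on each $\C_i$ since $\tau S$ lies in the same component as $S$ (because $\Ext^1_\C(S,\tau S)$ or rather $\Hom_\C(\tau S,\tau^2 S)\cong D\Ext^1_\C(\tau S,\tau S)$ — more simply, $T\cong\tau S$ is the unique simple with $\Ext^1(S,T)\ne 0$, so $S$ and $\tau S$ are adjacent, unless $\Ext^1_\C(S,-)$ vanishes on all simples, in which case $S$ is a component by itself with $\tau S\cong S$). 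The uniqueness of the decomposition is forced by the fact that a connected uniserial category is indecomposable. The index set is identified with the set of $\tau$-orbits of simples: within a component, repeatedly applying $\tau$ (which sends $S$ to the unique simple $T$ admitting a nonsplit extension $0\to T\to ?\to S\to 0$, i.e. the unique ``predecessor'') walks along the entire component, so each component contains exactly one $\tau$-orbit.

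Finally I would determine the quiver of each connected piece $\C_i$. By uniserality and Proposition~\ref{pr:Gabriel}, every simple $S$ in $\C_i$ has at most one arrow out and at most one arrow in (in the Ext-quiver, with arrows $S\to T$ when $\Ext^1(S,T)\ne 0$). Hence the underlying graph of a connected component is a path or a cycle, i.e. of type $A_\infty^\infty$, $A_n$, or $\tilde A_n$. Now $\tau$ gives a bijection on the simples of $\C_i$ shifting each vertex to its unique predecessor; a finite linear quiver $A_n$ has endpoints with no predecessor (a simple injective, resp.\ a simple projective), contradicting that $\tau$ is an equivalence (every simple must be $\tau$ of something and have a $\tau$), so the finite case must be the cycle $\tilde A_n$ with cyclic orientation, and the infinite case is $A_\infty^\infty$ with linear orientation (a one-sided infinite path $A_\infty$ is likewise excluded because its end vertex has no predecessor, i.e.\ is not in the image of $\tau$). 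The arrow multiplicities are all one by the length-one clause in Proposition~\ref{pr:Gabriel}. The main obstacle is the bookkeeping in the last step: one must carefully use the bijectivity of $\tau$ on simple objects to rule out the half-line $A_\infty$ and the finite line $A_n$, and confirm that on $\tilde A_n$ the orientation is genuinely cyclic (equivalently, that the unique predecessor map has no fixed behaviour other than the $(n{+}1)$-cycle) rather than, say, having two sources; this follows since each vertex has exactly one in-arrow and one out-arrow once we are on a cycle.
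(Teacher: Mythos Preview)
Your proposal is correct and follows essentially the same route as the paper: apply Serre duality to verify the hypotheses of Proposition~\ref{pr:Gabriel} and its dual (the paper phrases the latter as ``a quasi-inverse of $\tau$ provides a Serre functor for $\C^\op$''), then read off the quiver structure from the $\tau$-action on simples. One minor simplification: since $\Ext^1_\C(S,\tau S)\cong D\End_\C(\tau S)\neq 0$ for every simple $S$, each vertex has \emph{exactly} one outgoing and one incoming arrow, so the finite line $A_n$ and the half-line $A_\infty$ are excluded immediately and your parenthetical about $\Ext^1_\C(S,-)$ possibly vanishing on all simples is vacuous.
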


\begin{proof}
  We apply the criterion of Proposition~\ref{pr:Gabriel} to show that
  $\C$ is uniserial. To this end fix a simple object $S$. Then
  $\Ext_\C^1(S, T)\cong D\Hom_\C(T, \tau S) \neq 0$ for some
  simple object $T$ if and only if $T\cong\tau S$. Moreover,
  $\ell_{\End_\C(\tau S)}(\Ext_\C^1(S, \tau S))=1$. A quasi-inverse of
  $\t$ provides a Serre functor for $\C^\op$. Thus the category $\C$ is
  uniserial.

  The structure of the Ext-quiver of $\C$ follows from
  Proposition~\ref{pr:Gabriel}. The Serre functor acts on the set of
  vertices and the $\tau$-orbits provide the index set of the
  decomposition $\C=\coprod_{i\in I} \C_i$ into connected
  components. The Ext-quiver of $\C_i$ is of type $A_\infty^\infty$ if
  the corresponding $\tau$-orbit is infinite. Otherwise, the
  Ext-quiver of $\C_i$ is of type $\tilde A_n$ where $n+1$ equals the
  cardinality of the $\tau$-orbit.
\end{proof}

\subsection*{Auslander-Reiten duality}

For an abelian category $\A$ let $\St\A$ denote the \emph{stable
  category modulo injectives}, which is obtained from $\A$ by
identifying two morphisms $\p,\p'\colon X\to Y$ if
\[\Ext^1_\A(-,\p)=\Ext^1_\A(-,\p').\] 
When $\A$ has enough injective objects this means $\p-\p'$ factors
through an injective object. We write $\oHom_\A(-,-)$ for the
morphisms in $\St\A$.

Let us recall from \cite[Corollary~2.13]{Kr2016b} the following version of
Auslander-Reiten duality for Grothendieck categories, generalising the
usual duality for modules over artin algebras \cite{AR1975}.

\begin{prop}\label{pr:AR-duality}
  \pushQED{\qed} Let $\A$ be a $k$-linear and locally finitely
  presented Grothendieck category. There exist a functor
  $\t\colon\fp\A\to\St\A$ with a natural isomorphism
\begin{equation}\label{eq:ARformula}
D\Ext^1_\A(X,-)\cong\oHom_\A(-,\t X)\quad\text{for all}\quad
X\in\fp\A.\qedhere
\end{equation}
\end{prop}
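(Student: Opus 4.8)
\textbf{Proof plan for Proposition~\ref{pr:AR-duality}.}

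The plan is to reduce the statement to the description of effaceable functors via the duality \eqref{eq:eff}, combined with the stable-category reformulation already recorded above. First I would pass from $\A$ to the category $\C=\fp\A$ and recall that $\A\simeq\Lex(\C^\op,\Ab)$, so that $\Ext^1_\A(X,-)$ for $X\in\fp\A$ is controlled by the finitely presented functors on $\C$. More precisely, for $X\in\fp\A$ choose a short exact sequence $0\to X\xto{\a} I^0\to I^1$ with $I^0$ injective (this exists since $\A$ is Grothendieck, hence has enough injectives); then $\Ext^1_\A(X,-)$, restricted to $\fp\A$, is the cokernel of $\Hom_\A(\a,-)$ shifted appropriately, i.e. it is an \emph{effaceable} finitely presented functor on $\fp\A$ (or rather on an appropriate variable), because $\a$ is a monomorphism. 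This is the point where the hypothesis ``$\A$ locally finitely presented'' enters: it guarantees that $\Ext^1_\A(X,-)$ commutes with filtered colimits, so it is determined by its restriction to $\fp\A$ and defines a finitely presented functor there.

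Next I would apply the $k$-linear duality $D=\Hom_k(-,k)$ together with the equivalence \eqref{eq:eff}. The functor $F_X := \Ext^1_\A(X,-)|_{\fp\A} = \Coker\bigl(\Hom_{\fp\A}(I^0,-)\to \Hom_{\fp\A}(X,-)\bigr)$ lies in $\Eff(\fp\A,\Ab)$ since $X\to I^0$ is mono; under \eqref{eq:eff} its dual $F_X^{\scriptscriptstyle\vee}$ is an effaceable functor on $(\fp\A)^\op$, and the explicit formula recalled after \eqref{eq:eff} identifies $F_X^{\scriptscriptstyle\vee}$ with $\Coker\bigl(\Hom_{\fp\A}(-,I^0)\to\Hom_{\fp\A}(-,I^1)\bigr)$ evaluated on the relevant objects. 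The key observation is that $D F_X$ is a finitely presented functor on $(\fp\A)^\op$, which is therefore representable modulo the subcategory killed by the left-adjoint in \eqref{eq:eff-formula}; tracing through, it is represented on objects $Y\in\fp\A$ by a quotient of $\Hom_{\fp\A}(Y,-)$-type data. The object representing it is precisely the stable object $\t X\in\St\A$: one sets $\t X$ to be the cokernel (in $\A$, or its image in $\St\A$) obtained by dualising the injective copresentation of $X$, which on the level of stable categories is well-defined because modifying the copresentation changes the answer only by morphisms factoring through injectives.

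Having constructed $\t$ on objects, I would then verify naturality: a morphism $X\to X'$ in $\fp\A$ induces a morphism of copresentations (well-defined up to homotopy, since the $I^\bullet$ are injective), hence a morphism $F_{X'}\to F_X$ of effaceable functors, hence $DF_X\to DF_{X'}$, hence a morphism $\t X\to \t X'$ in $\St\A$; the ambiguity in lifting the copresentation is exactly what $\St\A$ quotients out, so $\t$ is a well-defined functor $\fp\A\to\St\A$. Finally the natural isomorphism \eqref{eq:ARformula} is obtained by chasing: $D\Ext^1_\A(X,Y)=DF_X(Y)$, and by the representability of $DF_X$ on $(\fp\A)^\op$ this equals $\Hom$ in the quotient category \eqref{eq:eff-formula}, which unwinds to $\oHom_\A(Y,\t X)$ because the stable category modulo injectives is precisely the target of that quotient construction (morphisms killed by $\Ext^1_\A(-,?)$ are exactly those factoring through injectives). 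The main obstacle I expect is the bookkeeping in the last step: checking that the quotient category $\Fp((\fp\A)^\op,\Ab)/\Eff$ applied to these dualised functors really computes $\oHom_\A(-,\t X)$ with the claimed variance, and that all the identifications are natural in \emph{both} variables simultaneously rather than just object-wise; this requires being careful about which category (``$\check\C$'' versus ``$\C$'') each functor lives on and keeping the two applications of Yoneda straight. A cleaner route, which I would mention, is simply to cite \cite{Kr2016b} directly, since the statement is quoted verbatim from there; but the sketch above is how one reconstructs it from the machinery developed in Sections~2--4 of this paper.
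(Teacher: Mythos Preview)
The paper does not prove this proposition: it is quoted from \cite[Corollary~2.13]{Kr2016b}, with the end-of-proof symbol placed inside the statement itself. So your closing remark---simply cite \cite{Kr2016b}---is exactly what the paper does, and there is no in-paper argument to compare your sketch against.

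Your sketch, however, has two concrete gaps that prevent it from going through as written. First, the cokernel $\Coker\bigl(\Hom_\A(I^0,-)\to\Hom_\A(X,-)\bigr)$ is not $\Ext^1_\A(X,-)$: a morphism $f\colon X\to Y$ lies in the image of $\Hom_\A(\a,Y)$ precisely when it factors through the injective $I^0$ (equivalently, through some injective, since any $X\to J$ with $J$ injective extends along the monomorphism $\a$), so this cokernel is the injectively stable Hom $\oHom_\A(X,-)$, not an Ext group. An injective copresentation of $X$ computes $\Ext^1_\A(-,X)$ via $\Hom_\A(-,I^\bullet)$, not $\Ext^1_\A(X,-)$; presenting the latter as a cokernel of representables would require a projective presentation of $X$, and $\A$ is not assumed to have any projectives. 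Second, the injective envelope $I^0$ in $\A$ is essentially never finitely presented (this already fails for $\A=\Mod\La$ with $\La$ non-artinian), so $\Hom_{\fp\A}(I^0,-)$ is not an object of $\Fp(\fp\A,\Ab)$ and the duality \eqref{eq:eff} cannot be applied to your $F_X$. Both issues show that an injective copresentation of $X$ inside $\A$ is the wrong input; one has to pass to a functor category in which the image of $X$ becomes projective before any such cokernel construction can begin, which is also why $\t X$ is only determined as an object of $\St\A$ rather than of $\A$.
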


\subsection*{Uniserial categories of infinite height}

We are now ready to characterise uniserial categories of infinite
height in terms of finitely presented functors.

\begin{thm}\label{th:uniserial}
  Let $\C$ be a $k$-linear length category such that $\Hom_\C(X,Y)$ is
  finite dimensional for all $X,Y\in\C$. Then the following are
  equivalent:
\begin{enumerate}
\item A finitely presented functor $\C\to\Ab$ is effaceable if and
  only if it has finite length.
\item The category $\C$ has Serre duality.
\item The category $\C$ is uniserial and all connected components have
  infinite height.
\end{enumerate}
\end{thm}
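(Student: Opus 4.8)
The strategy is to prove the cycle (2)$\Rightarrow$(3)$\Rightarrow$(1)$\Rightarrow$(2), using the machinery already assembled. The implication (2)$\Rightarrow$(3) is essentially done: by Proposition~\ref{pr:serre}, a Hom-finite $k$-linear length category with a Serre functor is uniserial with each connected component of type $A_\infty^\infty$ or $\tilde A_n$. So the only extra point is to rule out the cyclic type $\tilde A_n$, i.e.\ to show all components have infinite height. But a component of type $\tilde A_n$ contains finitely many simples and is Ext-finite with finite Loewy length bound, hence (by Gabriel's criterion recalled at the start of Section~2) is a module category over a self-injective Nakayama algebra; there the injective objects are non-zero, and one produces a finite length functor that is not effaceable — contradicting (1), which holds in any uniserial category of finite height only if injectives vanish (Corollary~\ref{co:uniserial} and Remark~\ref{re:injectives}). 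Actually it is cleaner to fold this into showing (3) fails $\Rightarrow$ (1) fails: a uniserial component of finite height has a non-zero injective object, so by Corollary~\ref{co:uniserial} effaceable and finite length functors do not agree, so (1) fails. Thus (2) forces (1) (via Corollary~\ref{co:uniserial}) and (1) forces each component to have infinite height, giving (2)$\Rightarrow$(1)$\Rightarrow$(3) for the ``height'' part, and Proposition~\ref{pr:serre} for the ``uniserial'' part.

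For (3)$\Rightarrow$(1): given that $\C$ is uniserial, Corollary~\ref{co:uniserial} already tells us every effaceable finitely presented functor has finite length. The content is the reverse inclusion: if every connected component has infinite height, then every injective object of $\C$ is zero, whence by Remark~\ref{re:injectives} effaceable and finite length functors agree. So I must check: in a connected uniserial category of infinite height, there are no non-zero injective objects in $\C$. Suppose $E\in\C$ were indecomposable injective of finite length $n=\ell(E)$. By Lemma~\ref{le:uniserial-height} its socle series is a chain $0=E_0\subseteq\cdots\subseteq E_n=E$ with $E/E_{n-1}$ simple, say $S$. Connectedness and infinite height give a simple $T$ with $\Ext^1_\C(S,T)\neq 0$ (the Ext-quiver has no ``sink'' in the relevant direction because the category is not of finite height), producing a non-split extension $0\to T\to X\to S\to 0$; but this yields a non-split mono $E_{n-1}\to$ (pushout) extending $E_{n-1}\subseteq E$ to something strictly larger with simple socle, contradicting injectivity of $E$ (the inclusion $E_{n-1}\hookrightarrow E$ would have to extend). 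One must phrase this carefully using the linear-chain structure of subobjects in a uniserial category.

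For (1)$\Rightarrow$(2): this is the substantial analytic step. Assuming effaceable $=$ finite length for finitely presented functors $\C\to\Ab$, I want to produce a Serre functor. First, (1) is self-dual by the Remark after Proposition~\ref{pr:eff-ass} (and Corollary~\ref{co:uniserial} tells us $\C$ is automatically uniserial — wait, (1) alone needs to give uniseriality; that follows because (1) $\Rightarrow$ every effaceable functor has finite length, so by Theorem~\ref{th:indec} $\C$ has almost split sequences, and the equality forces all injectives to vanish, and then one argues via the Ext-quiver that $\C$ is uniserial: each simple is the source of an almost split sequence with at most... ). Concretely: from (1), $\C$ has almost split sequences (Theorem~\ref{th:indec}, after embedding in the locally noetherian $\A=\Lex(\C^\op,\Ab)$), so there is an Auslander-Reiten translation; combined with the equivalence \eqref{eq:eff-uni} $\Fp(\C^\op,\Ab)/\Fp(\C^\op,\Ab)_0\xrightarrow{\sim}\C$ (valid since injectives are zero), one transports the Auslander-Reiten duality of Proposition~\ref{pr:AR-duality} applied to $\bar\A=\Lex(\check\C^\op,\Ab)$ down to $\C$. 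Specifically, Proposition~\ref{pr:AR-duality} gives $\tau\colon\fp\bar\A=\check\C\to\St\bar\A$ with $D\Ext^1_{\bar\A}(F,-)\cong\oHom_{\bar\A}(-,\tau F)$; restricting along $h\colon\C\to\check\C$ and using that finite length functors are exactly the ones killed in passing back to $\C$, the stable category $\St\bar\A$ modulo injectives reproduces $\C$ (injectives in $\bar\A$ correspond to pure-injectives in $\A$, and the finite length functors are the ``injectively trivial'' part), so $\tau$ descends to an endofunctor of $\C$, and the Ext formula \eqref{eq:ARformula} becomes the Serre duality formula $D\Ext^1_\C(X,Y)\cong\Hom_\C(Y,\tau X)$. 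That $\tau$ is an equivalence of $\C$ follows from self-duality of hypothesis (1): the same construction on $\C^\op$ yields a quasi-inverse. \textbf{The main obstacle} is exactly this last identification — carefully matching $\St\bar\A$ (stable modulo injectives of $\bar\A$), the Serre quotient $\Fp(\C^\op,\Ab)/\Fp(\C^\op,\Ab)_0$, and $\C$ itself, and checking the natural isomorphism \eqref{eq:ARformula} is compatible with all three identifications so that it yields an honest functorial isomorphism on $\C$; the bookkeeping between $\Hom$ in $\C$, $\Hom$ in $\bar\A$, and the stable $\oHom$ is where the care is needed.
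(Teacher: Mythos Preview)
Your cycle (3)$\Rightarrow$(1) is right and matches the paper: Corollary~\ref{co:uniserial} plus the observation that a connected uniserial category of infinite height has no nonzero injectives. Your (2)$\Rightarrow$(3) contains a slip---an Ext-quiver of type $\tilde A_n$ with cyclic orientation does \emph{not} force finite height (think of nilpotent representations of $Z_n$, which is exactly the $\tilde A_{n-1}$ case and has infinite height); what rules out finite height is that Serre duality itself forbids nonzero injectives (if $I$ is injective then $D\Ext^1(-,I)=0$ forces $\Hom(I,\tau-)=0$, so $I=0$). You essentially recover from this by routing through (1), so the logic survives, but the initial reasoning about $\tilde A_n$ is wrong.

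The substantive divergence is in (1)$\Rightarrow$(2). You propose to apply Proposition~\ref{pr:AR-duality} to $\bar\A=\Lex(\check\C^\op,\Ab)$, obtaining $\tau\colon\check\C\to\St\bar\A$, and then to descend this through the quotient $\check\C/\check\C_0\simeq\C^\op$ (note the variance: $\check\C=\Fp(\C,\Ab)^\op$, so \eqref{eq:eff-uni} for $\C^\op$ is what is relevant, not for $\C$). This is one level too high, and it creates real problems: $\Ext^1_{\bar\A}(\bar X,\bar Y)$ does not compute $\Ext^1_\C(X,Y)$ since $h\colon\C\to\check\C$ is only right exact, so the formula \eqref{eq:ARformula} at the $\bar\A$-level does not transparently restrict to the Serre duality formula on $\C$. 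The bookkeeping you flag as ``the main obstacle'' is therefore not just bookkeeping---you would need to relate $\Ext^1_{\bar\A}$ on representables to $\Ext^1_\C$, and that is not automatic.

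The paper instead applies Proposition~\ref{pr:AR-duality} directly to $\A=\Lex(\C^\op,\Ab)$, so that $\tau\colon\fp\A=\C\to\St\A$ already has $\C$ as its source and the formula reads $D\Ext^1_\A(X,-)\cong\oHom_\A(-,\tau X)$ with $\Ext^1_\A|_\C=\Ext^1_\C$. The remaining work is then: (i) show $\tau$ lands in $\C$, which follows from Theorem~\ref{th:indec}(4) since the only indecomposables of $\A$ outside $\C$ are injective and those vanish in $\St\A$; (ii) show $\oHom_\A(-,X)=\Hom_\A(-,X)$ for $X\in\C$, i.e.\ every morphism from an injective of $\A$ to $X\in\C$ is zero---again Theorem~\ref{th:indec} applied to the kernel does this; (iii) $\tau$ is essentially surjective because $\C$ has almost split sequences, and fully faithful by a direct comparison of the two duality isomorphisms. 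This route avoids the $\bar\A$-level entirely and is where you should redirect your argument.
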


\begin{proof}
Set $\A=\Lex(\C^\op,\Ab)$ and identify $\C\xto{\sim}\fp\A$.

(1) $\Rightarrow$ (2):  We claim that the functor
$\t\colon\C\to\St\A$ in Proposition~\ref{pr:AR-duality} yields a
Serre functor for $\C$.

First observe that $\oHom_\A(-,X)=\Hom_\A(-,X)$ for every
$X\in\C$. Condition (1) implies that every injective object in $\C$ is
zero; see Remark~\ref{re:injectives}. Now let $\p\colon I\to X$ be a
morphism in $\A$ with indecomposable injective $I$. Then $\Ker\p$ is
indecomposable, and therefore injective or finitely presented, by
Theorem~\ref{th:indec}. Thus $\p=0$.

The assumption on $\C$ in (1) is also satisfied by $\C^\op$, thanks
to the duality \eqref{eq:eff}. Thus $\uHom_\C(X,-)=\Hom_\C(X,-)$ for every
$X\in\C$. 

The functor $\t\colon\C\to\St\A$ in Proposition~\ref{pr:AR-duality}
lands in $\C$, because all indecomposable objects in $\St\A$ belong to
$\C$ by Theorem~\ref{th:indec}. In fact, the formula
\eqref{eq:ARformula} yields an almost split
sequence $0\to\t Z \to Y\to Z\to 0$ in $\C$ for every indecomposable object $Z$. Thus
$\t\colon\C\to\C$ is essentially surjective on objects, since the
category $\C$ has almost split sequences by
Theorem~\ref{th:indec}. The defining isomorphism for $\t$ shows that
$\t$ is fully faithful, since
\[\uHom_\C(-,-)=\Hom_\C(-,-)=\oHom_\C(-,-).\] Indeed,
the induced map $\t_{X,Y}\colon \Hom_\C(X,Y)\to\Hom_\C(\t X,\t Y)$ is
a monomorphism. On the other hand, $\t$ induces maps
\[\uHom_\C(X,Y)\xto{\sim}D^2 \uHom_\C(X,Y)\to D\Ext^1_\C(Y,\t
  X)\xto{\sim}\oHom_\C(\t X,\t Y)\] where the middle one is the dual of
the monomorphism \[\Ext_\A^1(-,\t X)\lto D\uHom_\A(X,-)\] from
\cite[Theorem~2.15]{Kr2016b}. Thus $\t_{X,Y}$ is bijective.

(2) $\Rightarrow$ (3): Use Proposition~\ref{pr:serre}.

(3) $\Rightarrow$ (1):  Use Corollary~\ref{co:uniserial}.
\end{proof}

\section{Minimal length categories of infinite height}

Throughout this section we fix an algebraically closed field $k$. 

Our aim is an explicit description of the length categories of
infinite height that are \emph{minimal} in the sense that every proper
closed subcategory has only finitely many isoclasses of indecomposable
objects.  Here, a full subcategory of an abelian category is
\emph{closed} if it is closed under subobjects and quotients.

\begin{thm}\label{th:minimal}
Let $\C$ be a $k$-linear length category with the following properties:
\begin{enumerate}
\item The category $\C$ has only finitely many isoclasses of simple objects.
\item The spaces $\Hom_\C(X,Y)$ and $\Ext_\C^1(X,Y)$ are finite
  dimensional for all objects $X,Y$ in $\C$.
\item The category $\C$ has infinite height.
\item Every proper closed subcategory of $\C$ has only finitely
  many isoclasses of indecomposable objects.
\end{enumerate}
Then $\C$ is equivalent to the category of nilpotent finite
dimensional representations of some cycle
\[\begin{tikzcd}[ampersand replacement=\&]
Z_n\colon\&1\arrow{r}\& 2\arrow{r}\&3\arrow{r} \&\cdots
\arrow{r}\& n-1\arrow{r}\&n \arrow[bend
left=12]{lllll}\&(n\ge 1).
\end{tikzcd}\]
\end{thm}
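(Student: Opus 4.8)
The plan is to show that the hypotheses force $\C$ to be uniserial of infinite height, and then to pin down the Ext-quiver and the specific category. First I would use Theorem~\ref{th:uniserial}: conditions (1) and (2) of the present theorem give exactly the Hom-finiteness hypothesis there, so it suffices to verify that $\C$ has Serre duality, or equivalently that every finitely presented effaceable functor $\C\to\Ab$ has finite length. To get this I would argue that the minimality condition (4) forces the finiteness of the support of every effaceable functor: if some effaceable $F$ had infinite support, one could try to build a proper closed subcategory (the closure under subobjects and quotients of the objects in its support, or rather its complement) containing infinitely many indecomposables, contradicting (4) — the point being that a finitely presented functor with infinite length picks out infinitely many indecomposables living inside a proper closed subcategory. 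Combined with Hom-finiteness this gives finite length of $F$ via Lemma~\ref{le:supp-homfinite}, hence by Theorem~\ref{th:uniserial} the category $\C$ is uniserial and all connected components have infinite height.

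Next I would invoke Proposition~\ref{pr:serre} (applicable since $\C$ now has Serre duality): $\C$ decomposes as $\coprod_{i\in I}\C_i$ into connected uniserial categories with Serre duality, each with Ext-quiver of type $A_\infty^\infty$ (linear orientation) or $\tilde A_n$ (cyclic orientation). Since $\C$ has only finitely many simple objects by (1), the index set $I$ is finite and each $\tilde A_n$ component is finite-dimensional in simples. A connected $A_\infty^\infty$ component has infinitely many simples, which contradicts (1) unless... actually it is outright excluded by (1). So every component is of type $\tilde A_n$, i.e. a cyclic quiver $Z_n$ with the nilpotent (equivalently, finite composition length) condition; these are the uniserial categories attached to cyclic quivers. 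I would then use minimality (4) once more to force $I$ to be a single component: a disjoint union of two or more such categories, or a single $\tilde A_n$ with $n$ chosen so that a proper closed subcategory still has infinitely many indecomposables, must be ruled out. Concretely, if $\C = \C_1 \amalg \C_2$ with both nonzero, then $\C_1$ is a proper closed subcategory with infinitely many indecomposables (each $\C_i$ has infinite height hence infinitely many indecomposables), contradicting (4). So $I$ is a singleton and $\C = \C_i$ is connected of type $\tilde A_n$, i.e. the category of nilpotent finite dimensional representations of $Z_n$.

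The one remaining point is to identify the $k$-linear structure precisely: a priori the endomorphism rings of simples could be finite extension fields of $k$, not $k$ itself. Here I would use that $k$ is algebraically closed (the standing hypothesis of this section) together with the finite-dimensionality of $\Hom$ and $\Ext^1$: the endomorphism ring of a simple object is a finite-dimensional division algebra over the algebraically closed field $k$, hence equals $k$; and $\Ext^1$ between adjacent simples is one-dimensional over $k$ by Proposition~\ref{pr:Gabriel}. Thus the Ext-quiver with its $k$-linear structure is exactly that of $kZ_n$ modulo the appropriate power of the arrow ideal, and by the standard covering/reconstruction of a uniserial category from its Ext-quiver (each indecomposable is determined by a top and a length, with a unique composition series), $\C$ is equivalent to $\operatorname{nil\,rep}(Z_n)$.

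I expect the main obstacle to be the first step: extracting from the purely internal minimality condition (4) — phrased in terms of closed subcategories and indecomposable objects — the functorial statement that effaceable functors have finite length, so that Theorem~\ref{th:uniserial} applies. The delicate part is checking that an effaceable functor of infinite length really does produce a \emph{proper} closed subcategory containing infinitely many indecomposables, rather than all of $\C$; this requires relating $\supp(F)$ (or its complement) to a closed subcategory, using that $F$ being effaceable and of infinite length leaves out infinitely many indecomposables on which $F$ vanishes, and that the objects on which a finitely presented functor vanishes need not by themselves be closed — one may instead have to take the closed subcategory generated by a well-chosen cofinite-in-support family, or dualize via \eqref{eq:eff}. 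Once uniseriality is in hand, the rest is bookkeeping with Propositions~\ref{pr:serre} and \ref{pr:Gabriel} and the algebraic closedness of $k$.
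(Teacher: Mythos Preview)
Your approach differs fundamentally from the paper's, and the first step contains two real gaps.

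First, you misread Theorem~\ref{th:uniserial}: its condition (1) says a finitely presented functor is effaceable \emph{if and only if} it has finite length. Establishing only ``effaceable $\Rightarrow$ finite length'' lands you in Theorem~\ref{th:indec}, not Theorem~\ref{th:uniserial}; by Remark~\ref{re:injectives} you would additionally need that $\C$ has no nonzero injective objects, which you do not address. Second, and more seriously, your mechanism for deducing ``effaceable $\Rightarrow$ finite length'' from minimality does not work as written: neither $\supp(F)$ nor its complement naturally generates a \emph{proper} closed subcategory. The closure of $\supp(F)$ under subobjects and quotients contains all composition factors of objects in $\supp(F)$, hence typically all simples and then all of $\C$; the complement $\{X:F(X)=0\}$ has no reason to be closed under subobjects or quotients. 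You correctly flag this as the main obstacle but offer no way around it, and I do not see one.

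The paper does not argue categorically here at all. It invokes Gabriel's correspondence to replace $\C$ by $\mod_0 A$ for a basic admissible (pseudocompact) $k$-algebra $A$; conditions (3) and (4) become ``$A$ infinite dimensional'' and ``$A$ mild'' (every proper quotient representation-finite). The proof is then pure ring theory in nine steps: Jans--Kupisch observations force each $e_iAe_i$ to be a quotient of $k\llbracket T\rrbracket$ and each $e_jAe_i$ to be cyclic on one side; a comparison of the two natural filtrations on $e_jAe_i$ shows that $eAe$ is again admissible and mild for any idempotent $e$ among the $e_i$; explicit computations for $n=2$ and $n=3$ then pin down the quiver as a single cycle with no relations. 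Uniseriality is a consequence of the classification, not an intermediate hypothesis to be verified.

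If you want a categorical substitute for your first step, a more viable opening move is to constrain the Ext-quiver directly via Proposition~\ref{pr:Gabriel}: for instance, $\dim_k\Ext^1_\C(S,T)\ge 2$ already yields infinitely many isoclasses of length-two indecomposables (parametrised by $\mathbb{P}(\Ext^1_\C(S,T))$) inside the proper closed subcategory of objects of Loewy length at most $2$, contradicting (4). But ruling out branching and, crucially, excluding relations of arbitrary length requires bounded-Loewy-length arguments of increasing depth---essentially the content of the paper's ring-theoretic steps.
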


We do not prove the theorem as it stands. Instead we switch to an
equivalent formulation involving representations of admissible
algebras.

\subsection*{Admissible algebras}

Let $A$ be a $k$-algebra and denote by $R$ its radical. We call $A$
\emph{admissible} if it satisfies the following two conditions:
\begin{enumerate}
\item[(a)] The algebras $A/R$ and $R/R^2$ are finite dimensional.
\item[(b)] The algebra $A$ is separated and complete for the $R$-adic
topology: $A\xto{\sim}\varprojlim_{l\geq0}\,A/R^l$, canonically.
\end{enumerate}

In the sequel, every admissible algebra $A$ will be considered
together with its $R$-adic topology. Accordingly, an ideal $I$ of $A$
is \emph{open} if $I$ contains some power $R^l$ iff $I$ is of finite
codimension. In other words, our admissible algebras are exactly the
profinite algebras satisfying (a). For more on profinite algebras, one
may consult \cite{Se1993}. The closure of $I$ equals the
intersection of all $I+R^l$.

Given an admissible algebra $A$, we are really interested in the
category $\mod_0 A$ of (left) $A$-modules of finite length. Condition
(a) ensures that $\mod_0 A$ has only finitely many isoclasses of
simples and that the spaces $\Hom_A(M,N)$ and $\Ext_A^1(M,N)$ are
finite dimensional for $M,N$ in $\mod_0 A$. Condition (b) comes up
naturally when one tries to recover $A$ from $\mod_0 A$, up to Morita
equivalence; see \cite{Ga1962}.

 \subsection*{Complete path algebras}

 Let $Q$ be a finite quiver. The \emph{complete path algebra}
 $k\llbracket Q\rrbracket$ consists of the formal series $\sum_u a_uu$
 where $u$ runs through the paths of $Q$ and $a_u\in k$. Here, the
 paths $\epsilon_i$ of length $0$, corresponding to the vertices $i$,
 are included. The multiplication is defined by
 \[\left(\sum_u a_uu\right)\left(\sum_v b_vv\right)=
   \sum_w \left(\sum_{uv=w}a_ub_v\right) w.\] For any integer
 $l\geq0$, the elements $\sum_u a_uu$, where $u$ is restricted to the
 paths of length $\geq l$, form an ideal
 $k\llbracket Q\rrbracket^{\geq l}$ of $k\llbracket Q\rrbracket$; this
 ideal is precisely the $l$-th power of the radical of
 $k\llbracket Q\rrbracket$. Consequently $k\llbracket Q\rrbracket$ is
 admissible and, according to our convention, will always be
 considered together with its $k\llbracket Q\rrbracket^{\geq1}$-adic
 topology.

\subsection*{Description of admissible algebras by
quiver and relations}

Any admissible $k$-algebra $A$ that is \emph{basic}, i.e., the
quotient $A/R$ is a finite direct product of copies of $k$, can be
presented as the complete path algebra of a finite quiver modulo a
closed ideal: choose a decomposition $1_A=e_1+\dots+e_n$ of the unit
element into pairwise orthogonal primitive idempotents; such a
decomposition can be obtained by lifting the unique one for $A/R$. The
quiver $Q_A$ then has vertices $1,\dots,n$, and there are arrows
$\alpha_{ji}^m\colon i\to j$, $1\leq m\leq n_{ji}$, where $n_{ji}$ is
the dimension of $e_j(R/R^2)e_i$.  Choose further elements
$a_{ji}^m\in e_jRe_i$, $1\leq m\leq n_{ji}$,  whose classes form a
basis of $e_j(R/R^2)e_i$. The choices uniquely determine a continuous
homomorphism $k\llbracket Q_A\rrbracket\to A$ that maps each
$\epsilon_i$ to $e_i$ and each $\alpha_{ji}^m$ to $a_{ji}^m$. This
homomorphism is surjective; its kernel $I$ is contained in
$k\llbracket Q_A\rrbracket^{\geq2}$ and necessarily closed. The
presentation $A\xleftarrow{\sim} k\llbracket Q_A\rrbracket/I$ allows
one to interprete a module in $\mod_0 A$ as a finite dimensional
representation of $Q_A$ that satisfies the relations of some
sufficiently small ideal $I+R^l$, depending on the module.

\subsection*{Representation types}

Let $A$ be an admissible $k$-algebra.

The algebra $A$ is \emph{representation-finite} if $\mod_0 A$ has
only finitely many isoclasses of indecomposables. In this case, $A$ is
necessarily finite dimensional. Otherwise, there is some
infinite dimensional indecomposable projective $P$, having a simple
top, and  $P$ admits indecomposable quotients of any finite length
$\geq1$.

The algebra $A$ is \emph{mild} if any quotient $A/I$ by some closed
ideal $I\neq0$ is representation-finite. 

\begin{rem}
  In the definition of `mild', the restriction to closed ideals is not
  necessary: if $I$ is an arbitrary ideal with closure $\bar I$, the
  algebras $A/\bar I$ and $A/I$ have the same finite dimensional
  modules. On the other hand, if $A/I$ is representation-finite, $I$
  is even open.
\end{rem}

\subsection*{The main result}

Very special but frequently encountered examples of admissible
algebras are the complete path algebras $k\llbracket Z_n\rrbracket$ of
the cyclic quivers
\[\begin{tikzcd}[ampersand replacement=\&]
Z_n\colon\&1\arrow{r}{\a_1}\& 2\arrow{r}{\a_2}\&3\arrow{r}{\a_3} \&\cdots
\arrow{r}{\a_{n-2}}\& n-1\arrow{r}{\a_{n-1}}\&n \arrow[bend
left=12]{lllll}{\a_n}\&(n\ge 1).
\end{tikzcd}\] They admit an alternative description: consider the
discrete valuation $k$-algebra $\frako=k\llbracket T\rrbracket$ with
maximal ideal $\frakm=Tk\llbracket T\rrbracket$ and fraction field
$K$.  Then $k\llbracket Z_n\rrbracket$ is isomorphic to the following
hereditary $\frako$-order in the simple $K$-algebra $M_n(K)$:
\[
\begin{bmatrix} \frako & \frakm & \frakm & \cdot & \frakm & \frakm \\
                          \frako & \frako & \frakm & \cdot & \frakm & \frakm \\
                          \frako & \frako & \frako & \cdot & \frakm & \frakm \\
                          \cdot & \cdot & \cdot & \cdot & \cdot & \cdot \\
                          \frako & \frako & \frako & \cdot & \frako & \frakm \\
                          \frako & \frako & \frako & \cdot & \frako & \frako 
\end{bmatrix}
\]

The Auslander-Reiten quiver of $\mod_0 k\llbracket Z_n\rrbracket$ is identified with
$\bbZ A_{\infty}/\tau^n$. If the orientation $\cdots\to  3\to 2\to1$ of
$A_{\infty}$ is used, then the vertex $(-i,l)$ corresponds to the uniserial
indecomposable of length $l$ with top located at $i$.

It is well-known that $k\llbracket Z_n\rrbracket$ is mild.

\begin{thm}
If a $k$-algebra is admissible, basic,
infinite dimensional, and mild, then it is isomorphic to 
$k\llbracket Z_n\rrbracket$ for some $n\ge 1$.
\end{thm}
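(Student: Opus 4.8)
The plan is to work with the algebra formulation and to establish two assertions: the Gabriel quiver $Q_A$ is a single oriented cycle $Z_n$, and the ideal of relations is zero. Present $A\cong k\llbracket Q_A\rrbracket/I$ with $I$ a closed ideal contained in $k\llbracket Q_A\rrbracket^{\ge 2}$, as in the discussion of quiver and relations above, and write $R$ for the radical of $A$. First I would note that $A$ is connected: a ring decomposition $A=A_1\times A_2$ with $A_2\neq 0$ displays $A_1$ as a proper quotient of $A$ by a nonzero closed ideal, hence representation-finite and so finite dimensional; as $A$ is infinite dimensional one factor is too, a contradiction. So $A$ is indecomposable and $Q_A$ is connected, and since $A$ is infinite dimensional so is $k\llbracket Q_A\rrbracket$, whence $Q_A$ contains an oriented cycle. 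Thus it remains to show that $Q_A$ is exactly one oriented cycle and that $I=0$.

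Next I would assemble the consequences of mildness. The quotient $A/R^2$ is the radical-square-zero algebra with quiver $Q_A$, and it is representation-finite, so its separated quiver is a disjoint union of Dynkin diagrams; this already excludes multiple arrows (a double arrow gives a $\widetilde A_1$) and bounds the in- and out-degrees at each vertex. Moreover, for every arrow $\alpha$ the closed two-sided ideal $\langle\alpha\rangle$ is nonzero (since $\alpha\notin I$) and proper (being contained in $R$), so $A/\langle\alpha\rangle\cong k\llbracket Q_A\setminus\{\alpha\}\rrbracket/\bar I$ is representation-finite and therefore finite dimensional; thus no arrow can be avoided by arbitrarily long paths of $A$, and likewise for the vertex ideals $Ae_iA$. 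I would keep in mind here that finite dimensionality of a quotient does not by itself contradict mildness — there are finite dimensional representation-infinite quotients, e.g.\ commutative square-zero algebras such as $k[x,y]/(x^2,y^2)$ — so some contradictions will come instead from producing such a quotient.

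The crux is then to pin down $Q_A$. Using the self-duality $A\cong A^{\op}$ (which preserves all the hypotheses and replaces $Q_A$ by $Q_A^{\op}$) it suffices to exclude, one configuration at a time, every connected quiver with an oriented cycle other than $Z_n$: a vertex carrying two outgoing arrows, a loop attached to a longer oriented cycle, and two oriented cycles meeting in a vertex or an arrow. In each case I would exhibit a suitable nonzero closed ideal $J\subsetneq A$ contradicting mildness — either generated by an ``inessential'' arrow so that $A/J$ stays infinite dimensional, or obtained by imposing a commutativity relation between two cyclic paths at a common vertex so that $A/J$ surjects onto a finite dimensional but representation-infinite algebra of type $k[x,y]/(x^2,y^2)$. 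What is left is a connected quiver in which every vertex has in-degree and out-degree one and which contains an oriented cycle; such a quiver is $Z_n$. This step is the main obstacle: mildness constrains \emph{all} nonzero closed ideals, so ruling out a quiver means finding the right exotic relation, and the ideal $I$ is itself unrestricted beyond forcing $A$ infinite dimensional, so a long path may survive in $A$ yet be congruent modulo $I$ to one through a prescribed arrow, making the bookkeeping delicate; the natural aids are the associated graded algebra $\operatorname{gr}A=kQ_A/\operatorname{gr}I$, in which the relevant ideals behave more rigidly, and the infinite dimensional indecomposable projective (with simple top and indecomposable quotients of all finite lengths), whose radical layers isolate the vertices and arrows that must form the cycle.

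Finally, with $Q_A=Z_n$, I would determine $I$. The nonzero closed two-sided ideals of $k\llbracket Z_n\rrbracket$ are precisely the powers $k\llbracket Z_n\rrbracket^{\ge l}$ of the radical — the standard ideal structure of the hereditary order displayed above, which also follows from the uniserial structure of its indecomposable projectives. If $I\neq 0$ then $I=k\llbracket Z_n\rrbracket^{\ge l}$ for some $l\ge 2$, so $A$ would be finite dimensional, contrary to hypothesis. Hence $I=0$ and $A\cong k\llbracket Z_n\rrbracket$.
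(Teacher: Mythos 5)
Your proposal is a plan rather than a proof: the decisive step is missing. After the (correct) preliminary reductions — connectedness, existence of an oriented cycle, exclusion of multiple arrows via the separated quiver of $A/R^2$ — everything hinges on showing that no connected quiver with an oriented cycle other than $Z_n$ can occur. There you only announce that "in each case I would exhibit a suitable nonzero closed ideal $J$ contradicting mildness", and you yourself identify the obstruction without resolving it: since $I$ is an arbitrary closed ideal inside $k\llbracket Q_A\rrbracket^{\ge 2}$, a relation may identify a long cycle path with a path through the arrow you propose to kill, so that $A/\langle\alpha\rangle$ could be finite dimensional and representation-finite after all, and your intended contradiction evaporates; similarly, producing a representation-infinite quotient such as one mapping onto $k[x,y]/(x^2,y^2)$ requires control over $I$ that you never establish. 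This is exactly where the paper does its work, and by a quite different route: it analyses the corner algebras $e_iAe_i$ and the bimodules $e_jAe_i$ using the Jans--Kupisch structure theory for the representation-finite quotients $A/R^l$ (Step~1), proves that every corner $eAe$ is again admissible and mild (Steps~3--4), and then settles the two-vertex and three-vertex cases by explicit computations with the ideals $e_iAe_jAe_i$ (Steps~6 and~8), from which the general shape of the quiver and the absence of relations follow (Step~9). None of this machinery, or a substitute for it, appears in your sketch.

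A secondary point: your final step rests on the claim that the nonzero closed two-sided ideals of $k\llbracket Z_n\rrbracket$ are exactly the radical powers $k\llbracket Z_n\rrbracket^{\ge l}$. That is false for $n\ge 2$: the closed ideal generated by a single arrow $\alpha_1$ of $Z_2$ contains all paths passing through $\alpha_1$ but not $\alpha_2$, hence lies strictly between $R^2$ and $R$ and is no radical power (and the ideal generated by $\alpha_2\alpha_1$ gives a counterexample even inside $R^2$). What you actually need — that every nonzero closed ideal of $k\llbracket Z_n\rrbracket$ has finite codimension, so $I\neq 0$ would force $A$ finite dimensional — is true and easy to prove directly (push a nonzero element into some $e_iAe_i\cong k\llbracket T\rrbracket$ by multiplying with paths), so this flaw is repairable; but the gap described above is not, short of supplying an argument of the kind the paper gives.
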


Our result is close to well-known characterisations of hereditary
orders; see \cite{Gu1987}. The only novelty is that we can do without
assuming a `large' center or some `purity' condition \cite{Dr1990}.

\begin{proof}
Let the $k$-algebra $A$ be admissible, basic, infinite dimensional,
and mild. We denote by $R$ the radical of $A$ and fix a decomposition
$1_A=e_1+\cdots+e_n$ into pairwise orthogonal primitive idempotents.

Step 1: \emph{For any $i$, the algebra $e_iAe_i$ is a quotient of
  $k\llbracket T\rrbracket$, and for any $i,j$, the space $e_jAe_i$ is
  cyclic as a right module over $e_iAe_i$ or as a left module over
  $e_jAe_j$.} Indeed, by well-known observations due to Jans
\cite{Ja1957} and Kupisch \cite{Ku1965}, a similar statement is true
for each of the representation-finite algebras $A/R^l$. Our claim
follows by passage to the limit.

Step 2: The $e_jAe_j$-$e_iAe_i$-bimodule $e_jAe_i$ carries two natural
filtrations. The first one is its radical filtration given by the
subbimodules \[(e_jAe_i)^{\geq l} =\sum_{r+s=l}
(e_jRe_j)^sA(e_iRe_i)^r.\] By step~1 it coincides with the radical
filtration of $e_jAe_i$ over $e_iAe_i$ or over $e_jAe_j$, it contains
each non-zero subbimodule of $e_jAe_i$ exactly once, and each quotient
$(e_jAe_i)^{\geq l}/(e_jAe_i)^{\geq l+1}$ has dimension $\leq1$.

The second one is the filtration $(e_jR^me_i)_{m\geq0}$ induced by the
$R$-adic filtration on $A$. Each term, being a subbimodule of
$e_jAe_i$, appears in the first filtration, and each quotient
$e_jR^me_i/e_jR^{m+1}e_i$ is semisimple over $e_iAe_i$ and over
$e_jAe_j$.

Remembering that the second filtration is separated, we conclude that
it is obtained from the first one by possibly introducing repetitions
and that both filtrations define the same topology.

Step 3: Let $e$ be an idempotent of $A$ that is the sum of some of
$e_1,\dots,e_n$. We claim that $eAe$ {\it is admissible}: indeed, we
just saw in step~2 that each space $e_j(R/ReR)e_i$ (with $e_i$ and
$e_j$ occurring in $e$) has dimension at most $1$. Arguing as above, the
topologies on $e_jAe_i$ induced by the $eRe$-adic filtration on $eAe$
and the $R$-adic filtration on $A$ coincide. Therefore
$eAe\xto{\sim}\varprojlim_{l\geq0} eAe/(eRe)^l$, canonically.

Of course, our claim would be wrong without the assumption of mildness
of $A$.

Step 4: Let $e$ be as in step~3. We claim that $eAe$ \emph{is
  mild}. Indeed, let $J$ be a non-zero ideal of $eAe$ and $I=AJA$ its
extension to $A$. Since $A/I$ is in particular finite dimensional, the
fully faithful left adjoint $N\mapsto (A/I)e\otimes_{eAe/J}N$ of the
restriction functor $M\mapsto eM$ then maps $\mod_0 eAe/J$ into
$\mod_0 A/I$. Since $A/I$ is representation-finite, so is
$eAe/J$. Consequently, $eAe$ is mild.

Step 5: \emph{For at least one $i$, the algebra $e_iAe_i$ is
infinite dimensional, i.e., isomorphic to $k\llbracket T\rrbracket$.} Otherwise all
$e_jAe_i$ and therefore $A$ itself are finite dimensional by step~1.

Step 6: Consider the case $n=2$ (thus $1_A=e_1+e_2$) and assume for
definiteness that $e_1Ae_1$ is isomorphic to $k\llbracket T\rrbracket$ (step~5). Then
$e_1Ae_2Ae_1\neq0$: otherwise $A/Ae_2A$ is still isomorphic to
$k\llbracket T\rrbracket$ and representation-infinite, contradicting the mildness of
$A$. Therefore $e_1Ae_2Ae_1=(e_1Re_1)^r$ for some uniquely determined
integer $r\geq1$. We now transform the powers
\begin{equation}\label{eq:club}
(e_1Re_1)^l
\end{equation}
into ideals of $e_2Ae_2$:
\begin{equation}\label{eq:spade} 
e_2A(e_1Re_1)^lAe_2
\end{equation}
and then back into ideals of $e_1Ae_1$:
\begin{equation}\label{eq:heart} 
e_1Ae_2A(e_1Re_1)^lAe_2Ae_1=(e_1Re_1)^r(e_1Re_1)^l(e_1Re_1)^r=(e_1Re_1)^{l+2r}.
\end{equation}
Since the sequence of ideals in \eqref{eq:heart} is strictly
decreasing, so is the one in \eqref{eq:spade}. In particular, also
$e_2Ae_2$ is isomorphic to $k\llbracket T\rrbracket$ (step~5) and
$e_2Ae_1Ae_2\neq0$. Interchanging $e_1$ and $e_2$, we have
$e_2Ae_1Ae_2=(e_2Re_2)^s$ for some uniquely determined integer
$s\geq1$. We get
\[ (e_2Re_2)^{2s}=e_2Ae_1Ae_2Ae_1Ae_2=e_2A(e_1Re_1)^rAe_2
\]
and
\[ s=\dim (e_2Re_2)^s/(e_2Re_2)^{2s}=\dim
  e_2Ae_1Ae_2/e_2A(e_1Re_1)^rAe_2\geq r.
\]
By symmetry we even have $r=s$. If $r=s\geq2$, the quiver of $A$ is
\[
\begin{tikzcd}[cells={nodes={}}]
  \arrow[loop left, distance=2em, start anchor={[yshift=-1ex]west},
  end anchor={[yshift=1ex]west}]{} \arrow[yshift=0.7ex]{r}
  \scriptstyle{\circ} &
  \scriptstyle{\circ}\arrow[yshift=-0.7ex]{l}\arrow[loop right,
  distance=2em, start anchor={[yshift=1ex]east}, end
  anchor={[yshift=-1ex]east}]{}
\end{tikzcd}
\]
and already $A/R^2$ is representation-infinite: contradiction! Thus
$r=s=1$ and $A$ is isomorphic to $k\llbracket Z_2\rrbracket$.

Step 7: Since $A$ clearly is connected, one immediately deduces from
step~6: \emph{for any two distinct idempotents $e_i$ and $e_j$,
  $(e_i+e_j)A(e_i+e_j)$ is isomorphic to $k\llbracket Z_2\rrbracket$.}

Step 8: Consider the case $n=3$ (thus $1_A=e_1+e_2+e_3$) and put
$R_i=e_iAe_i$, $M_i=e_{i+1}Ae_i$, $N_i=e_{i-1}Ae_i$. In this step the
indices are taken modulo $3$. By the previous step we have the
relations
\[ R_{i+1}M_i=M_iR_i,\quad R_{i-1}N_i=N_iR_i,\quad
N_{i+1}M_i=R_i,\quad M_{i-1}N_i=R_i.
\]
Now $M_{i+1}M_i\neq 0$, since $N_{i+2}M_{i+1}M_i=R_{i+1}M_i\neq0$, for
instance, and therefore
\[ M_{i+1}M_i=N_iR_i^{r(i)}=R_{i-1}^{r(i)}N_i
\]
for some uniquely determined integer $r(i)\geq0$. Calculating in two
ways:
\[ M_{i+2}(M_{i+1}M_i)=M_{i+2}N_iR_i^{r(i)}=R_i^{r(i)+1},
\]
\[ (M_{i+2}M_{i+1})M_i=R_i^{r(i+1)}N_{i+1}M_i=R_i^{r(i+1)+1},
\]
we see that $r(i)$ is independent of $i$; denote this integer again by
$r$. Similarly, we have
\[ N_{i-1}N_i=M_iR_i^s=R_{i+1}^sM_i
\]
for all $i$ and some uniquely determined integer $s\geq0$. Calculating
again in two ways:
\[ N_{i+2}(M_{i+1}M_i)=N_{i+2}N_iR_i^r=M_iR_i^sR_i^r=M_iR_i^{r+s},\]
\[ (N_{i+2}M_{i+1})M_i=R_{i+1}M_i=M_iR_i,\]
we see that $r+s=1$ and hence that $(r,s)$ equals $(1,0)$ or
$(0,1)$. This means that $A$ is isomorphic to $k\llbracket Z_3\rrbracket$.

Step 9: In the general case, taking into account steps~3 and 4 and
applying steps~6 and 8, one immediately sees that the quiver of $A$ is
a cycle. Since $A$ is infinite dimensional, there cannot be any
relation.
\end{proof}

\end{document}